\title{Complementary cycles of any length in regular bipartite tournaments}
\author{St\'ephane Bessy\thanks{This work was supported by ANR under contract DIGRAPH ANR-19-CE48-0013-02} \\ LIRMM, Univ Montpellier, CNRS, France\\
  {\tt stephane.bessy@lirmm.fr}\\[2ex]
Jocelyn Thiebaut\\
Univ. Orléans, INSA Centre Val de Loire,\\ LIFO EA 4022, F-45067 Orléans, France\\
{\tt jocelyn.thiebaut@univ-orleans.fr}\\}
\theoremstyle{plain}
\newtheorem{theorem}{Theorem}
\newtheorem{lemma}{Lemma}
\newtheorem{corollary}[lemma]{Corollary}
\newtheorem{claim}{Claim}[lemma]
\theoremstyle{definition}
\newtheorem{conjecture}[lemma]{Conjecture}
\newcommand{\rb}{\}}
\newcommand{\lb}{\{}
\begin{document}

\maketitle
\begin{abstract}
 Let \(D\) be a \(k\)-regular bipartite tournament on $n$ vertices. We show that, for
 every \(p\) with \mbox{\(2\le p\le n/2-2\)}, \(D\) has a cycle
 \(C\) of length \(2p\) such that \(D\setminus C\) is hamiltonian
 unless \(D\) is isomorphic to the special digraph \(F_{4k}\). This
 statement was conjectured by Manoussakis, Song and Zhang [K.  Zhang,
   Y.  Manoussakis, and Z.  Song. Complementary cycles containing a
   fixed arc in diregular bipartite tournaments.  {\it Discrete
     Mathematics}, 133(1-3):325--328,1994]. In the same paper, the
 conjecture was proved for $p=2$ and more recently Bai, Li and He gave
 a proof for $p=3$ [Y. Bai, H. Li, and W. He.  Complementary cycles in
   regular bipartite tournaments. {\it Discrete Mathematics},
   333:14--27, 2014].
\end{abstract}

{\it Keywords:} Cycle factor, complementary cycles, regular bipartite
tournaments

\section{Introduction}

Throughout all the paper, we are dealing with directed graphs or digraphs. Notations
not explicitly stated follows~\cite{BJG09}.

A \textit{cycle-factor} of a digraph \(D\) is a spanning subdigraph of
\(D\) whose components are vertex-disjoint (directed) cycles. For some
positive integer \(k\), a \textit{\(k\)-cycle-factor} of \(D\) is a
cycle-factor of \(D\) with \(k\) vertex-disjoint cycles; it can also
be considered as a partition of \(D\) into \(k\) hamiltonian
subdigraphs. In particular, a 1-cycle-factor is a hamiltonian cycle of
\(D\). The cycles of a 2-cycle-factor are often called
\emph{complementary cycles}.

A \textit{tournament} is an orientation of a complete graph. A lot of
work has been done on cycle-factors in tournament. For
instance, the classical result of Camion~\cite{C59} states that a
tournament is strong if and only if it admits an hamiltonian cycle
(i.e.\ a 1-cycle-factor). Reid~\cite{R85} proved that every
2-connected tournament with at least 6 vertices and not isomorphic to
\(T_7\) has a 2-cycle-factor, where \(T_7\) is the Paley tournament on
7 vertices: it has vertex set $\{1,2,3,4,5,6,7\}$ and is the union of
the three directed cycles: the cycle $1,2,3,4,5,6,7$, the cycle $1,3,5,7,2,4,6$ and the cycle $1,5,2,6,3,7,4$. This
  was then extended by Chen, Gould and Li~\cite{C01} who proved that every
  \(k\)-connected tournament with at least \(8k\) vertices contains a
  \(k\)-cycle-factor.

On the other hand, finding cycles of many lengths in different digraphs
is a natural problem in Graph Theory~\cite{BT81}. For example, Moon
proved in~\cite{M66} that every vertex of a strong tournament is in a
cycle of every length. Concerning cycle-factor with prescribed lengths
in tournaments, Song~\cite{S93}, extended the results of
Reid~\cite{R85}, and proved that every 2-connected tournament with at
least 6 vertices and not isomorphic to \(T_7\) has a 2-cycle-factor
containing cycles of lengths \(p\) and \(|V(T)|-p\) for all \(p\) such
that \(3 \leq p \leq |V(T)|-3\). Li and Shu~\cite{LS05} finally
refined the previous result by proving that any strong tournament with
at least 6 vertices, a minimum out-degree or a minimum in-degree at
least 3, and not isomorphic to \(T_7\), has 2-cycle-factor containing
cycles of lengths \(p\) and \(|V(T)|-p\) for all \(p\) such that \(3
\leq p \leq |V(T)|-3\). Recently, K\"uhn, Osthus and
Townsend~\cite{KO16} have extended these results by showing that every
\(O(k^5)\)-connected tournament admits a \(k\)-cycle-factor with
prescribed lengths.

\medskip

In this paper, we focus on cycle-factors in \(k\)-regular bipartite
tournaments. A {\it \(k\)-regular bipartite tournament} is an
orientation of a complete bipartite graph $K_{2k,2k}$ where every
vertex has out-degree $k$ exactly. The existing results concerning
this class of digraphs try to extend what is known about cycle-factors
in tournaments. Thus, Zhang and Song~\cite{SZ88} proved that
any \(k\)-regular bipartite tournament with \(k \geq 2\) has a
2-cycle-factor. Moreover, Manoussakis, Song and Zhang~\cite{ZMS94}
conjectured the following statement whose proof is the main result of
this paper.

\begin{theorem}%
 \label{theo:2cf}
 For $k\ge 2$ let \(D\) be a \(k\)-regular bipartite tournament not
 isomorphic to \(F_{4k}\). Then for every \(p\) with \(2\le p\le k\),
 \(D\) has a 2-cycle-factor containing cycles of length \(2p\) and
 \(|V(D)|-2p\).
\end{theorem}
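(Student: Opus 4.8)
The plan is to argue by successively enlarging the shorter cycle of a 2-cycle-factor. Write the bipartition of $D$ as $(X,Y)$ with $|X|=|Y|=2k$, so $|V(D)|=4k$ and, since the out-degrees are all $k$, every vertex also has in-degree exactly $k$; because $D$ is bipartite every cycle is even, and a 2-cycle-factor consists of two cycles of lengths $2p$ and $4k-2p$. As the two cycles play symmetric (complementary) roles, it suffices to prove that for each $p$ with $2\le p\le k$ the digraph $D$ admits a 2-cycle-factor one of whose cycles has length exactly $2p$. The existence of at least one 2-cycle-factor is guaranteed by Zhang and Song, and the case $p=2$ is exactly the base established by Manoussakis, Song and Zhang; I would then climb from $p=2$ up to $p=k$ in steps of one.

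The engine is an augmentation lemma: if $D\not\cong F_{4k}$ has a 2-cycle-factor $\{C,C'\}$ with $|C|=2p$ and $2\le p<k$ (so that $|C'|=4k-2p>|C|$), then $D$ has a 2-cycle-factor whose shorter cycle has length $2p+2$. The transfer I would use moves a single consecutive pair $a\to b$ of $C'$, with $a\in X$ and $b\in Y$, into $C$. Deleting $a$ and $b$ from $C'$ leaves a cycle precisely when the \emph{bypass arc} $a^-\to b^+$ is present, where $a^-\in Y$ is the predecessor of $a$ on $C'$ and $b^+\in X$ is the successor of $b$. To absorb the arc $a\to b$ into $C$ I would use an arc $c\to d$ of $C$ with $c\in Y$ and $d\in X$ (equivalently, $d$ is any $X$-vertex of $C$ and $c=d^-$ is its predecessor), replacing $c\to d$ by $c\to a\to b\to d$; this requires the two \emph{insertion arcs} $c\to a$ and $b\to d$. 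Thus a valid enlargement exists as soon as some choice of $a$ (hence $b=a^+$) and of $d$ (hence $c=d^-$) satisfies the three arc conditions $a^-\to b^+$, $d^-\to a$ and $b\to d$ simultaneously.

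To find such a choice I would exploit regularity. The cycle $C'$ offers $|C'|/2=2k-p\ge k+1$ candidate pairs $(a,a^+)$, and $C$ offers $p$ insertion points $d$; each of the three conditions can fail only for a controlled number of candidates, because every vertex has exactly $k$ out-neighbours and $k$ in-neighbours on the opposite side. A counting argument over the roughly $(2k-p)\,p$ combinations should show that they cannot all be blocked, which completes the induction: starting from the length-$4$ cycle of the $p=2$ case and applying the lemma repeatedly yields a cycle of every even length $2p$ with $2\le p\le k$, and hence 2-cycle-factors with all the required pairs of lengths.

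The hard part is the analysis of the boundary case, when the counting is tight and no single-pair transfer is available. Here I expect that the simultaneous failure of the three arc conditions for every candidate forces an extremely rigid adjacency pattern between $C$, $C'$ and the two sides $X,Y$; one must then either produce a more elaborate reroute (for instance swapping a single vertex between the two cycles, or transferring a pair joined through $C$ rather than a consecutive pair of $C'$) or conclude that the rigidity is exactly that of $F_{4k}$. Pinning down that the only obstruction to augmentation is the exceptional digraph $F_{4k}$, and doing so uniformly for all $p$, is where the bulk of the casework lies and is the principal obstacle of the proof.
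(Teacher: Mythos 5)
Your overall strategy --- induct on $p$ and enlarge the shorter cycle by two at each step --- is the same as the paper's, but the proposal leaves the actual content of the induction step unproved, and the specific engine you propose (transferring one consecutive pair $a\to b$ of $C'$ into $C$ via a bypass arc $a^-\to b^+$ and two insertion arcs) provably fails in whole families of non-exceptional instances, not just in a tight boundary case. A concrete obstruction: suppose $D[C']$ is isomorphic to $F_{4k-2p}$ (the long cycle itself carries the exceptional structure, which can happen even when $D\not\cong F_{4k}$). Then along $C'$ the vertices cycle through the four classes $K\to L\to M\to N\to K$, so $a^-$ and $b^+$ always lie in classes with \emph{no} arc from the former to the latter; the bypass arc $a^-\to b^+$ never exists for any choice of $a$, and no counting over the $(2k-p)p$ candidates can rescue the transfer. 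The paper devotes an entire case (Case B) to exactly this situation, and resolves it not by a cleverer single-pair move but by switching the underlying perfect matching along anti-cycles, re-decomposing into a cycle-factor with many cycles, and then re-merging via the H\"aggkvist--Manoussakis theorem (their Lemma~\ref{lem:cf-to-2cf}). None of that machinery, or a substitute for it, appears in your plan.

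A second, structural gap: your induction hypothesis is too weak to close. The paper is forced to prove the stronger statement that the short cycle $C_1$ can be chosen \emph{not isomorphic to} $F_{2p}$ when $p$ is even, precisely because when $C_1\cong F_{2p}$ the adjacency pattern around $C_1$ can be rigid in a way that blocks every enlargement yet has nothing to do with $D$ being $F_{4k}$ (see their Case A.2.2, where the contradiction is drawn from $C_1\not\cong F_{2p}$, not from $D\not\cong F_{4k}$). Your closing sentence --- that the rigidity forced by the failure of all transfers ``is exactly that of $F_{4k}$'' --- is therefore not what one can hope to prove; the correct dichotomy involves the local exceptional structures $F_{2p}$ and $F_{4k-2p}$ as well, and managing them is most of the work. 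As it stands the proposal is a plausible opening move plus an acknowledgement that the hard part remains; it does not constitute a proof.
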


The digraph \(F_{4k}\) corresponds to the \(k\)-regular bipartite
tournament consisting of four independent sets \(K, L\), \(M\) and
\(N\) each of cardinality \(k\) with all possible arcs from \(K\) to
\(L\), from \(L\) to \(M\), from \(M\) to \(N\) and from \(N\) to
\(K\). In fact, every cycle of \(F_{4k}\) has length \(0 \pmod 4\). Thus,
for instance, \(F_{4k}\) has no 2-cycle-factor of length 6 and
\(4k-6\). Zhang, Manoussakis and Song proved their conjecture when \(p
= 2\) in their original paper~\cite{ZMS94}. In 2014, Bai, Li and He
proved the conjecture for \(p =3\)~\cite{BLH14}.
\bigskip

Our proof of Theorem~\ref{theo:2cf} runs by induction on $p$ and so we
will use as basis cases the results of Theorem~\ref{theo:2cf} for
\(p=2\)~\cite{ZMS94} and \(p=3\)~\cite{BLH14}.  To perform induction
step, we will need also a weaker form of Theorem~\ref{theo:2cf} given
by the following lemma.

\begin{restatable}{lemma}{cftotcf}
 \label{lem:cf-to-2cf}
 For $k\ge 2$ let \(D\) be a \(k\)-regular bipartite tournament. If
 \(D\) contains a cycle-factor with a cycle \(C\) of length \(2p\)
 with \(2\leq p\leq k\), then \(D\) contains a \((2p, |V(D)|-
 2p)\)-cycle-factor \((C', C'')\). Moreover, if \(p\) is at least 3
 and even and \(D[C]\) is not isomorphic to \(F_{2p}\), then \(D[C']\)
 is not isomorphic to \(F_{2p}\) neither
\end{restatable}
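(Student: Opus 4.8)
The plan is to keep the given $2p$-cycle $C$ fixed and to merge all the remaining cycles of the factor into a single cycle of length $|V(D)|-2p=4k-2p$; only when this turns out to be impossible will we be forced to modify $C$. Write $D'=D-V(C)$. Since $C$ is a cycle of a bipartite tournament it uses equally many vertices from each side, so $D'$ is again a balanced bipartite tournament on $4k-2p$ vertices, and the cycles of the factor other than $C$ form a cycle-factor of $D'$. The two facts I will rely on are that every $k$-regular bipartite tournament is strongly connected, and the classical theorem that a balanced bipartite tournament is Hamiltonian if and only if it is strongly connected.

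First I would treat the case where $D'$ is strong. Then $D'$ is a strong balanced bipartite tournament, hence Hamiltonian, and its Hamiltonian cycle $C''$ together with $C'=C$ is the desired $(2p,4k-2p)$-cycle-factor. In particular $D[C']=D[C]$, so the \emph{moreover} conclusion is immediate here. The same Hamiltonicity criterion also explains why merging works in general: if two vertex-disjoint even cycles have arcs running in both directions between them, then the bipartite tournament induced on their union is strong and balanced, hence Hamiltonian, so the two cycles can be replaced by a single cycle on the same vertex set.

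The crux is therefore the case where $D'$ is not strong. Let $S_1\Rightarrow\cdots\Rightarrow S_r$ (with $r\ge 2$) be the strong components of $D'$ in acyclic order; since each cycle of the factor lies inside one component, every $S_i$ is spanned by cycles and hence is itself balanced and Hamiltonian. No single cycle can cover $V(D')$ using only arcs of $D'$, so to build $C''$ of length $4k-2p$ we are forced to use vertices of $C$ as bridges between the components while pushing an equal number of $D'$-vertices out into the small cycle. Concretely, I would use the strong connectivity of $D$ to locate an in-arc from $C$ into $S_1$ and an out-arc from $S_r$ into $C$, thread the Hamiltonian cycles of $S_1,\dots,S_r$ together through a segment of $C$, and then rebalance by an exchange that swaps an equal number of vertices from each side of the bipartition between the large cycle and the small one, so that the two lengths become exactly $2p$ and $4k-2p$. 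Making this exchange respect both the bipartite parity and the strong-connectivity requirements, and guaranteeing that the arcs needed to perform it exist, is where the $k$-regularity is essential, and it is the step I expect to be the main obstacle.

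Finally, for the \emph{moreover} statement I note that the difficulty only arises in the non-strong case, since otherwise $C'=C$. Every cycle of $F_{2p}$ has length $0\pmod 4$ because of its rigid cyclic four-part structure $K\to L\to M\to N\to K$; thus any balanced bipartite tournament containing a cycle of length $2\pmod 4$, or a local configuration incompatible with that four-part order, cannot be isomorphic to $F_{2p}$. Since $p$ is even and $D[C]\not\cong F_{2p}$, the induced digraph $D[C]$ carries such a witness, and I would arrange the exchange of the previous paragraph so that $C'$ retains a consecutive block of $C$ containing this witness. This keeps $D[C']\not\cong F_{2p}$ and completes the argument.
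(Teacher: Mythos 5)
Your proposal disposes correctly of the easy case: if $D'=D-V(C)$ is strong, then (since it inherits a cycle-factor) Theorem~\ref{theo:haggvist-manoussakis} makes it Hamiltonian and you are done with $C'=C$. (Be careful with your stated criterion, though: a strong balanced bipartite tournament need not be Hamiltonian; you need the cycle-factor hypothesis, which you do have here.) But the entire content of the lemma lies in the case you leave as a sketch. When $D'$ is not strong, Theorem~\ref{theo:haggvist-manoussakis} gives cycles $C_1,\dots,C_\ell$ with $C_i$ dominating $C_j$ for $i<j$, and one must trade vertices between $C$ and these cycles so that the two resulting cycles have lengths \emph{exactly} $2p$ and $4k-2p$. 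Your plan --- ``locate an in-arc from $C$ into $S_1$ and an out-arc from $S_r$ into $C$, thread the Hamiltonian cycles together through a segment of $C$, and then rebalance by an exchange'' --- does not work with a single entry arc and a single exit arc: you get no control over where along $C$ these arcs attach, hence no control over how many vertices of $C$ end up in each of the two new cycles, and the rebalancing step is precisely the statement being proved. The paper's proof exists to supply this control: working in the contracted digraph $D^M$, it uses $k$-regularity to count arcs between $C$ and $C_1$ and between $C_\ell$ and $C$ (Claim~\ref{claim:nb-of-arc-c-ci}), deduces that at least $p(p-1)/2$ pairs of vertices of $C^M$ are ``well connected'' to $C_1^M$ or from $C_\ell^M$, and by a pigeonhole/colouring argument extracts a single pair $\{y_1,z_1\}$ that is well connected in both senses. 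Only with such a pair can one choose, for any prescribed number $i$ of internal vertices, attachment points $y,z$ on $C_1^M$ at the right distance apart (Claim~\ref{claim:pathlenght}) and hence cut the cycles into pieces of the exact sizes needed. You correctly identify this as ``the main obstacle,'' but identifying an obstacle is not overcoming it; as written the proof has a hole exactly where the lemma is hard.

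The ``moreover'' part has a second, related gap. Because the exchange is unavoidable in the non-strong case, the new cycle $C'$ is \emph{not} a perturbation of $C$ that retains a ``consecutive block'' carrying a witness: in the paper's construction $C'$ consists of an arc of $C$ together with vertices drawn from $C_1,\dots,C_{i_0}$, and there is no reason a prescribed local configuration of $D[C]$ survives. Moreover, $D[C]\not\cong F_{2p}$ need not be witnessed by a cycle of length $2\pmod 4$. The paper instead certifies $D[C']\not\cong F_{2p}$ by a configuration created by the construction itself: $C'$ contains a vertex $y\in C_1^M$ and a vertex $y'\in C_\ell^M$ with $yy'$ an arc and $y'y$ a non-arc, a pattern $F_{2p}$ cannot contain. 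You would need some argument of this kind rather than one that tries to preserve a witness from $C$.
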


Theorem~\ref{theo:2cf} and Lemma~\ref{lem:cf-to-2cf} will both need
the following result due to H\"aggkvist and Manoussakis to be proven.
\begin{theorem}[H\"aggkvist and Manoussakis~\cite{HM89} and
  Manoussakis~\cite{M87}]%
 \label{theo:haggvist-manoussakis}
 A bipartite tournament containing a cycle-factor has either a
 hamiltonian cycle or a cycle-factor consisting of cycles \(C_1, \dots
 , C_m\) such that for any \(1\le i<j\le m\), there is no arc from
 \(C_j\) to \(C_i\).
\end{theorem}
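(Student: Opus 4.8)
The plan is to apply Theorem~\ref{theo:haggvist-manoussakis} to the sub--bipartite-tournament $D[R]$, where $R=V(D)\setminus V(C)$ and the given cycle-factor restricts to a cycle-factor of $D[R]$. If $D[R]$ has a Hamiltonian cycle $C''$, I would simply take $C'=C$: then $(C',C'')$ is the required $(2p,|V(D)|-2p)$-cycle-factor, and since $D[C']=D[C]$ the ``moreover'' clause holds trivially. So the whole difficulty lies in the case where $D[R]$ is \emph{not} Hamiltonian.

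In that case Theorem~\ref{theo:haggvist-manoussakis} provides cycles $C_1,\dots,C_m$ with $m\ge 2$ covering $R$ and no arc from $C_j$ to $C_i$ whenever $i<j$; in particular $C_1$ dominates every later cycle and $C_m$ is dominated by every earlier one. Writing $c_i=|C_i|/2$ (each $C_i$ is balanced between the two colour classes), I would first extract numerical constraints from $k$-regularity: summing sizes gives $\sum_i c_i=2k-p$, while bounding the out-degree of a vertex of $C_1$ (which dominates all vertices of $C_2\cup\dots\cup C_m$ in the opposite class) and the in-degree of a vertex of $C_m$ yields $c_1,c_m\ge k-p$ and $c_1,c_m\le k$. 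The key consequence is that $C_1$ cannot be a global source of $D$, nor $C_m$ a global sink: otherwise every arc between $C$ and $C_1$ would leave $C_1$, which together with the forward-only structure would force every in-neighbour of a vertex of $C_1$ to lie inside $C_1$, hence $c_1=k$ and, worse, that vertex would have no successor on the cycle $C_1$ --- a contradiction. Thus there is an arc $e_1$ from $C$ into $C_1$ and, symmetrically, an arc $e_2$ from $C_m$ into $C$, so that $C,C_1,\dots,C_m$ all lie in a single strong component.

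With $e_1$ and $e_2$ in hand I would merge $C_1,\dots,C_m$ into a single large cycle by routing forward through the chain --- the complete domination between consecutive blocks lets one splice the Hamiltonian paths of the individual $C_i$ into a spanning path of $R$ from the head of $e_1$ to the tail of $e_2$ --- and then closing it up through $C$ via $e_2$ and $e_1$. Because closing through $C$ would swallow part of $C$ and make the resulting cycle too long, the actual step is a \emph{balanced swap}: I move a matched pair of vertices of $C$ (one from each colour class, sitting at the endpoints of $e_1$ and $e_2$ and consecutive on $C$, so that a single arc of $C$ provides the return bridge from $C_m$ to $C_1$) onto the large cycle, and in exchange I excise a matched pair from the chain and re-insert it into the opened copy of $C$. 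Checking that this leaves, on one side, a single cycle $C''$ of length exactly $|V(D)|-2p$ and, on the other, a single cycle $C'$ of length exactly $2p$ is where I expect the real work to be: one has to select, among the possibly several arcs $C\to C_1$ and $C_m\to C$, a pair $e_1,e_2$ whose $C$-endpoints are adjacent with the correct orientation, and simultaneously choose the excised pair so that the opened segment of $C$ re-closes into a $2p$-cycle. I anticipate a short case distinction according to the colour classes of the endpoints of $e_1,e_2$ and according to whether $m=2$ or $m\ge 3$, always using the surplus of forward arcs inside the chain to produce the connecting arcs that the reassembly requires.

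Finally, for the ``moreover'' clause I would argue separately. When $C'=C$ (the Hamiltonian case) there is nothing to prove. When $C'$ is produced by the swap, I would invoke the rigidity of $F_{2p}$: for $p\ge 3$ even it is the bipartite tournament on four classes of size $p/2$ with the single arc pattern $K\to L\to M\to N\to K$, and every cycle of $F_{2p}$ has length $\equiv 0\pmod 4$. Since $D[C]\not\cong F_{2p}$, I would track a configuration witnessing this failure (an arc, or a short cycle of length $\not\equiv 0\pmod 4$) across the swap and check that $C'$ still carries such a configuration, so that $D[C']\not\cong F_{2p}$; this is precisely where the hypotheses that $p$ is at least $3$ and even, and that $D[C]\not\cong F_{2p}$, are used.
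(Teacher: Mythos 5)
Your proposal does not prove the statement it is supposed to prove. The statement in question is Theorem~\ref{theo:haggvist-manoussakis} itself, yet your very first sentence is ``apply Theorem~\ref{theo:haggvist-manoussakis} to the sub-bipartite-tournament $D[R]$'': read as a proof of that theorem, the argument is circular and therefore void. What you have actually sketched --- the $(2p,|V(D)|-2p)$-cycle-factor, the use of $k$-regularity of the ambient digraph, and the ``moreover'' clause about $F_{2p}$ for $p\ge 3$ even --- is a proof attempt for Lemma~\ref{lem:cf-to-2cf}, a different result (whose proof in the paper does indeed begin exactly as you do, by invoking Theorem~\ref{theo:haggvist-manoussakis} to order the remaining cycles so that $C_i$ dominates $C_j$ for $i<j$). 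Note also that the paper contains no proof of Theorem~\ref{theo:haggvist-manoussakis} at all: it is imported from~\cite{HM89} and~\cite{M87} and used as a black box, so there is nothing in the paper your argument could be matched against --- and in any case the theorem concerns an \emph{arbitrary} bipartite tournament with a cycle-factor, so your appeals to $k$-regularity are hypotheses the statement does not grant you.

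A genuine proof of Theorem~\ref{theo:haggvist-manoussakis} needs ideas absent from your sketch. The standard route is structural, not degree-counting: one shows that if two vertex-disjoint cycles of a bipartite tournament are joined by arcs in both directions, then the subdigraph induced on their union has a Hamiltonian cycle; taking a cycle-factor with the minimum number of cycles, any two of its cycles must therefore be related by complete one-way domination (no arc back), and one then verifies that this domination relation admits an acyclic linear ordering $C_1,\dots,C_m$ --- otherwise a cyclically dominating family would again merge into fewer cycles. None of your machinery (the chain merge, the ``balanced swap'', the $F_{2p}$ rigidity tracking) addresses this. Even judged as an attempt at Lemma~\ref{lem:cf-to-2cf}, the decisive step --- choosing $e_1$, $e_2$ and the excised pair so that \emph{both} sides re-close into single cycles of lengths exactly $2p$ and $|V(D)|-2p$ --- is explicitly deferred (``where I expect the real work to be''), which is precisely the part the paper's proof of that lemma carries out in full, via the contracted digraph $D^M$, the counting of the well-connected pair-sets $W$ and $R$, and Claim~\ref{claim:pathlenght}.
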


Section~\ref{sec:def} contains introducing tools and definitions we
use for the proofs. Lemma~\ref{lem:cf-to-2cf} and
Theorem~\ref{theo:2cf} are proven in Section~\ref{sec:lem}
and~\ref{sec:thm} respectively.  Finally, in Section~\ref{sec:cr} we
give some concluding remarks concerning cycle-factors in bipartite
tournaments.

\section{Definitions and Notations}%
\label{sec:def}
\paragraph*{Generic definitions} 
Throughout the paper, all digraphs are simple and loopless. Notations
not given here are consistent with~\cite{BM08}. The vertex set of a
digraph \(D\) is denoted by \(V(D)\) and its arcs set by
\(A(D)\). Given a digraph \(D\) and a set \(X\) of vertices such that
\(X \subseteq V(D)\), we denote by \(D[X]\) the subdigraph with vertex
set \(X\), and arc set \(\lb{}uv \in A(D) : u \in X, v\in X\rb{}\). If
\(H\) is a subdigraph of \(D\) we abusively write \(D[H]\) for
\(D[V(H)]\). In the following, we say that two digraphs \(D_1\) and
\(D_2\) are isomorphic if there exists a bijection \(\varphi : V(D_1)
\rightarrow V(D_2)\) such that, for every ordered pair \(x, y\) of
vertices in \(D_1\), \(xy\) is an arc of \(D_1\) if and only if
\(\varphi(x)\varphi(y)\) is an arc of \(D_2\).

The \emph{complement digraph} of a digraph \(D\), denoted by
\(\overline{D}\), corresponds to the digraph with vertex set \(V(D)\)
and arc set \(\lb{}uv : uv \notin A(D)\rb{}\).
\medskip

For any vertices \(u\) and \(v\) such that \(uv\) is an arc of \(D\),
we say that \(v\) is an \emph{out-neighbor} of \(u\) and \(u\) is an
\emph{in-neighbor} of \(v\). The \emph{out-neighborhood}
(resp. \emph{in-neighborhood}) of \(u\) in \(D\), denoted
\(N^+_D(u)\) (resp. \(N^-_D(u)\)), corresponds to the set of vertices
which are out-neighbor (resp. \emph{in-neighbor}) of \(u\). The
\emph{out-degree} (resp. \emph{in-degree}) of a vertex \(u\), denoted
\(d^+_D(u)\) (resp. \(d^-_D(u)\)), is the size of its
\emph{out-neighborhood} (resp. \emph{in-neighborhood}). We say that \(D\)
is \emph{regular} if, for any vertices $u$ and $v$ of $D$, we have $d^+(u) = d^-(u) = d^+(v) = d^-(v)$. If, in addition, we have
\(d^+(u)=d^-(u)=k\), we say that $D$ is \emph{$k$-regular}. For two vertex-disjoint sets \(A\) and \(B\), if
there are all the possible arcs going from \(A\) to \(B\), then we say
that \(A\) \emph{dominates} \(B\). The number of arcs from \(A\) to
\(B\) is denoted by \(e(A,B)\). We simply write \(e(A)\) instead of
\(e(A,A)\) to denote the number of arcs linking two vertices of \(A\).

Similarly, if there is no arc from \(u\) to \(v\), we say that there
is an \emph{anti-arc} from \(u\) to \(v\). Moreover, we say that \(v\)
is an \emph{anti-out-neighbor} of \(u\) and \(u\) is an
\emph{anti-in-neighbor} of \(v\). The \emph{anti-out-neighborhood}
(resp. \emph{anti-in-neighborhood}) of \(u\) in \(D\), denoted
\(\overline{N^+_D}(u)\) (resp. \(\overline{N^-_D}(u)\)), corresponds
to the set of vertices which are anti-out-neighbor
(resp. \emph{anti-in-neighbor}) of \(u\). The \emph{anti-out-degree}
(resp. \emph{anti-in-degree}) of a vertex \(u\), denoted
\(\overline{d^+_D}(u)\) (resp. \(\overline{d^-_D}(u)\)), is the size
of its anti-out-neighborhood
(resp. \emph{anti-in-neighborhood}). For two vertex-disjoint sets
\(A\) and \(B\) if there are all the possible anti-arcs going from \(A\) to
\(B\) (that is there are no arcs from \(A\) to \(B\)), we say that
\(A\) \emph{anti-dominates} \(B\).
 
If there is no ambiguity, we omit the reference to the considered
digraph in the previous notations (\(N^+(u)\) instead of \(N^+_D(u)\),
\emph{etc}...).\medskip

Given a digraph \(D\) and a set \(\lb{}u_1, \dots, u_t\rb{}\) of \(t\)
disjoint vertices of \(D\), we say that \(P=u_1, \dots, u_t\) is a
\emph{directed path} of \emph{length} $t-1$ of \(D\) if \(u_i u_{i+1}
\in A(D)\) for \(1 \leq i \leq t-1\). The vertices $u_2,\dots
,u_{t-1}$ are called the \emph{internal vertices} of $P$. In addition,
if we also have \(u_t u_1 \in A(D)\), then \(u_1, \dots, u_t\) is a
\emph{directed cycle} of \emph{length} $t$. A cycle of length 2 is
also called a {\it digon}. In the paper, \emph{path} and \emph{cycle}
always means directed path and directed cycle,
respectively. Symmetrically, given a digraph \(D\) and a set of \(t\)
disjoint vertices \(\lb{}u_1, \dots, u_t\rb{}\) of $D$, we say that
\(u_1,\dots, u_t\) is an \emph{anti-path} if \(u_{i}u_{i+1} \notin
A(D)\) for any \(1 \leq i \leq t-1\). In addition, if \(u_t u_1 \notin
A(D)\), then we obtain an \emph{anti-cycle}. A digraph \(D\) is
\emph{strongly connected} (or \emph{strong} for short) if we have a
path from $u$ to $v$ for any vertices \(u\) and \(v\) of \(D\). If $D$
is not strong, a \emph{strongly connected component} (or \emph{strong
  component} for short) of $D$ is a set $X$ of vertices of $D$ such
that $D[X]$ is strong and $X$ is maximal by inclusion for that. A
strong component $X$ is an \emph{initial strong component} (resp. a
\emph{terminal strong component}) of $D$ if there is no arc from
$V\setminus X$ to $X$ (resp. from $X$ to $V\setminus X$) in $D$. It is
well-known that every non-strong digraph contains at least one initial
and one terminal strong component.  Given a set \(X\) of vertices and
a cycle \(C\), we denote by \(C(X)\) the set of the successors of
\(X\) along \(C\). If \(X\) is a singleton \(\lb{}x\rb{}\), we simply
write \(C(x)\) instead of \(C(\lb{}x\rb{})\).

Finally for a digraph \(D\) and integers \(n_1, \dots, n_k\) such that
\(n_1+\cdots + n_k= |V(D)|\), a \((n_1,\dots, n_k)\)\emph{-cycle
 -factor} is a \(k\)-cycle-factor $(C_1,\dots ,C_k)$ of $D$ such that
for each $i=1,\dots ,k$ the cycle $C_i$ has length $n_i$. The cycle
$C_1$ will be called the \emph{first} cycle of the cycle-factor.

\paragraph*{Bipartite tournaments and contracted digraphs} A \emph{bipartite tournament} is an orientation of a complete
bipartite graph. Let \(D\) be a \(k\)-regular bipartite tournament
with bipartition \((S,T)\). We have \(|S|=|T|=2k\), and for any vertex
\(u\) of \(D\) we have \(d^+(u)=d^-(u)=k\). Moreover, the (unoriented)
graph on \(S\cup T\) containing an edge for every arcs from \(S\) to
\(T\) is a bipartite graph where every vertex has degree \(k\). Hence,
by Hall's Theorem~\cite{BM08}, it admits a perfect matching. Let \(M\)
be a set of arcs of \(D\) corresponding to such a perfect
matching. For each vertex \(u\) of \(S\), the vertex \(M(u)\) denotes
the only vertex of \(T\) such that the arc \(uM(u)\) is an arc of
\(M\).

We extend this notation to sets that is, given a subset \(X\) of
\(S\), we define \(M(X)\) by \(M(X) = \bigcup_{x\in X} M(x)\).  \par
\medskip
Now, given a perfect matching \(M\) of $D$ made of arcs from $S$ to
$T$, we define the \emph{contracted digraph according to} \(M\),
denoted \(D^M\) and obtained by contracting the arcs of \(M\) and only
keeping the arcs of \(D\) from \(T\) to \(S\). More formally, the new
digraph \(D^M\) has vertex set \(S\) and arc set \mbox{\(\lb{}uv
  \colon u\in S, v\in S \textrm{ and } M(u)v \in A(D)\rb{}\)}. As the
vertex set of \(D^M\) is \(S\), we also consider vertices of \(D^M\)
as vertices of \(D\). Notice that \(D^M \) has \(2k\) vertices and
that for every vertex \(u\) of \(D^M\) we have \(N^+_{D^M} (u) =
N^+_{D} (M (u))\) and so, $u$ has out-degree \(k\) exactly. Similarly,
\(u\) has in-neighborhood \(\lb{} v\in S : M(v)\in N^{-}_D(u)\rb{}\)
and so has in-degree \(k\) exactly. Notice also that \(D^M\) does not
contain any parallel arc but may contains cycles on 2 vertices. See Figure~\ref{fig:contracteddigraph2} which depicts an example of contracted digraph according a matching.
\par
\begin{figure}
	\centering
	\includegraphics[width=0.8\linewidth]{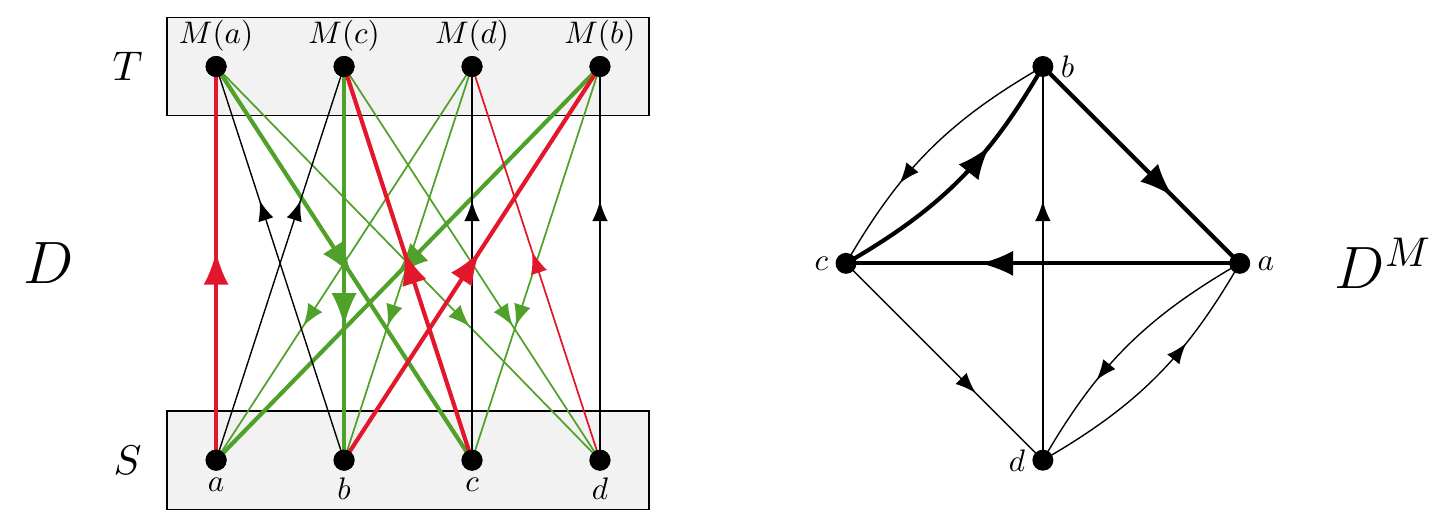}
	\caption{A 2-regular bipartite tournament $D$ and the contracted digraph according to the red matching $M$. In $D^M$, we only keep the vertices of $S$ and the green arcs from $T$ to $S$. Note that the cycle $a,c,b$ in $D^M$ (with bold arcs) corresponds to the cycle $a,M(a),c,M(c),b,M(b)$ in $D$ (also depicted with bold arcs).}
	\label{fig:contracteddigraph2}
\end{figure}

Let
$D'$ be a sub-digraph of $D$ and denote by $M'$ the arcs of $M$ with
both extremities in $D'$. If $M'$ is also a perfect matching of $D'$,
then we abusively denote by $D'^M$ the contracted digraph
$D'^{M'}$.\\ If now $M$ is a perfect matching of $D$ made of arcs from
$T$ to $S$, then we can symmetrically define $D^M$ by exchanging $S$
and $T$ in the previous definitions.  \par

Structurally, \(u_1,\dots ,u_t\) is a cycle in \(D^M\) if and only if
\(u_1,M(u_1),\dots ,u_t,M(u_t)\) is a cycle of \(D\). Thus, to prove
Theorem~\ref{theo:2cf}, if \(D\) is not isomorphic to \(F_{4k}\), then
for every \(p\) with \(2\leq p \leq k-2\), it suffices to find a \((p,
2k-p)\)-cycle-factor in \(D^M\). Finally, note that the
graph \(D^M\) contains the same information than \(D\) but, most of
the time, it will be easier to identify particular structures in the
former.\par

\section{From cycle-factor to 2-cycle-factor}%
\label{sec:lem}

The aim of this section is to prove Lemma~\ref{lem:cf-to-2cf} which
states that, given a cycle-factor with a cycle of length \(2p\), we
can ``merge'' the other cycles in order to obtain a \((2p, |V(D)|-
2p)\)-cycle-factor.  Moreover, in the case where $p$ is even, we could
ask that the new cycle of length \(2p\) is not isomorphic to
\(F_{2p}\) is the former was not. This condition will be useful in the
induction step to prove Theorem~\ref{theo:2cf}.

\cftotcf*

\begin{proof} As the cases where \(p=2\) and \(p=3\) of Theorem~\ref{theo:2cf}
  are already proven in~\cite{ZMS94} and~\cite{BLH14}, we assume that
  \(p\ge 4\).\\ Consider a cycle-factor $\cal C'$ of \(D\) containing
  a cycle $C$ of length \(2p\), such that $D[C]$ is not isomorphic to
  $F_{2p}$ if $p$ is even, and such that $\cal C'$ has a minimum total
  number of cycles. We denote by \(\cal C\) the set of cycles of $\cal
  C'$ different from $C$. Thus, we want to show that \(|{\cal C}|=1\).
  By Theorem~\ref{theo:haggvist-manoussakis}, if \(|{\cal C}| \neq 1\)
  then we can assume that \({\cal C}=\lb{}C_1, \dots , C_{\ell}\rb{}\)
  with \(\ell \ge 2\) and that \(C_i\) dominates \(C_j\) whenever
  \(i<j\). Let \((S,T)\) denotes the bipartition of \(D\) and for
  every \(i\), we denote by \(c_i\) the number of vertices of
  \(V(C_i)\cap S\), that is \(C_i\) is of length \(2c_i\).
 
 \begin{claim}%
  \label{claim:nb-of-arc-c-ci}
  We have \(e(C,C_1)=c_1(2k-c_1)\) and \(e(C_{\ell},C) = c_{\ell}(2k-c_{\ell})\).
 \end{claim}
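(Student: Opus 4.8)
The plan is to prove the two equalities by a straightforward double-counting of degrees that exploits the domination order of the cycles $C_1,\dots ,C_\ell$ established just above. I will carry out $e(C_\ell,C)=c_\ell(2k-c_\ell)$ in detail and obtain $e(C,C_1)=c_1(2k-c_1)$ by the symmetric argument (either repeat the count with in-degrees, or reverse all arcs, which turns $D$ into another $k$-regular bipartite tournament and reverses the order so that $C_1$ takes the role of $C_\ell$).

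First I would record the two structural facts that drive the computation. Since $C_\ell$ is a cycle of a bipartite tournament it alternates between $S$ and $T$, so it has exactly $c_\ell$ vertices in each part; hence the vertices of $C_\ell$ span exactly $c_\ell^2$ arcs among themselves, one for each of the $c_\ell\times c_\ell$ pairs consisting of one vertex of $V(C_\ell)\cap S$ and one of $V(C_\ell)\cap T$, i.e.\ $e(C_\ell)=c_\ell^2$. Next, because the cycles are ordered so that $C_i$ dominates $C_j$ for $i<j$, applying this with the larger index equal to $\ell$ shows that every $C_i$ with $i<\ell$ dominates $C_\ell$, so there is no arc from $C_\ell$ to any $C_i$ with $i<\ell$. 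Consequently every arc whose tail lies in $C_\ell$ has its head either inside $C_\ell$ or inside $C$.

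The counting step then reads as follows. As $D$ is $k$-regular and $C_\ell$ has $2c_\ell$ vertices, the sum of out-degrees over $V(C_\ell)$ equals $2c_\ell k$; by the previous paragraph this sum splits as $e(C_\ell)+e(C_\ell,C)$, so $2c_\ell k=c_\ell^2+e(C_\ell,C)$, giving $e(C_\ell,C)=2c_\ell k-c_\ell^2=c_\ell(2k-c_\ell)$. For $e(C,C_1)$ I would count in-degrees over $V(C_1)$ instead: using that $C_1$ dominates every $C_j$ with $j\ge 2$, there is no arc entering $C_1$ from any such $C_j$, so $2c_1 k=e(C_1)+e(C,C_1)=c_1^2+e(C,C_1)$, whence $e(C,C_1)=c_1(2k-c_1)$.

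Since this is elementary counting, I do not expect a genuine obstacle. The only points requiring care are translating the conclusion of Theorem~\ref{theo:haggvist-manoussakis} correctly (``no arc from $C_j$ to $C_i$ for $i<j$'') into the precise statements that $C_\ell$ emits no arc to the earlier cycles and $C_1$ receives no arc from the later ones, and remembering that the internal arc count of a cycle of length $2c$ in a bipartite tournament is exactly $c^2$, independent of its cyclic structure.
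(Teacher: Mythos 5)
Your proof is correct and is essentially the argument in the paper: the paper counts in-degrees over $V(C_1)$ using that $C_1$ dominates all later cycles, obtains $2kc_1=c_1^2+e(C,C_1)$, and notes the second equality follows symmetrically via out-degrees of $C_\ell$, exactly as you do (you merely present the $C_\ell$ case in detail first). No issues.
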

 
 \begin{proof} A vertex \(x_1\) in \(C_1\) is an in-neighbor of every vertex in  \(C_2, \dots ,C_{\ell}\).
   Hence, we have \(\sum_{x\in C_1}d^-_{D}(x)=e(C_1)+e(C,C_1)\).  Thus
   we get \(2kc_1=c_1^2+e(C,C_1)\) and the first result holds.  The
   other equality is obtained similarly by reasoning on the out-neighborhood of $C_\ell$.
 \end{proof}
 
 \medskip
Now, using Claim~\ref{claim:nb-of-arc-c-ci} we have the following.
 \[\Big(\ \frac{1}{c_{\ell}}\sum_{x\in T\cap C_{\ell}} d^+_{C}(x)+\frac{1}{c_1}\sum_{x\in T\cap C}d^+_{C_1}(x)\ \Big)+\Big(\ \frac{1}{c_{\ell}}\sum_{x\in S\cap C_{\ell}}d^+_{C}(x)+\frac{1}{c_1}\sum_{x\in S\cap C}d^+_{C_1}(x)\ \Big)\]
 \[=\frac{1}{c_{\ell}}e(C_{\ell},C)+\frac{1}{c_1}e(C,C_1)=4k-(c_1+c_{\ell})\]
 Hence, either we have
 \[\Big(\ \frac{1}{c_{\ell}}\sum_{x\in T\cap C_{\ell}} d^+_{C}(x)+\frac{1}{c_1}\sum_{x\in T\cap C}d^+_{C_1}(x)\ \Big)\ge 2k-\frac{(c_1+c_{\ell})}{2}\] or we have
 \[\Big(\ \frac{1}{c_{\ell}}\sum_{x\in S\cap C_{\ell}} d^+_{C}(x)+\frac{1}{c_1}\sum_{x\in S\cap C}d^+_{C_1}(x)\ \Big)\ge 2k-\frac{(c_1+c_{\ell})}{2}\]

 Without loss of generality, we can assume that the former holds
 (otherwise we exchange in that follows the role of \(S\) and \(T\)). \smallskip
 
 Denote by \(M\) the set of arcs of the digraph induced by the cycle-factor \(C, C_1, \ldots, C_\ell{}\) and going from \(S\) to \(T\) in
 \(D\). It is clear that \(M\) forms a perfect matching of \(D\) and
 that \(C^M \cup {\cal C}^M\) is a cycle-factor of \(D^M\), where
 \({\cal C}^M = \lb{} C^M_1, \ldots, C^M_\ell{}\rb{}\).  Moreover,
 notice that the length of \(C^M\) is \(p\) and for \(i\) with \(1\leq
 i \leq \ell \) the length of \(C_i^M\) is \(c_i\).  By the previous
 assumption, in \(D^M\) we have the following
 \begin{equation}
  \begin{split}
   \frac{e(C_{\ell}^M,C^M)}{c_{\ell}}+\frac{e(C^M,C_1^M)}{c_1}\ge 2k-
   \frac{(c_1+c_{\ell})}{2}
   \label{eq:assumption}
  \end{split}
 \end{equation}
 
 Now, we will find suitable vertices in \(C^M\) to design the desired
 2-cycle-factor. To do so, let $W$ (resp. $R$) be the set
 of pairs \(\lb{}x,y\rb{}\) of distinct vertices of \(C^M\) which are
 ``well connected'' to \(C_1^M\) (resp.\ \emph{from} \(C_{\ell}^M\)), that is
 such that \mbox{\(d^+_{C_1^M}(x)+d^+_{C_1^M}(y)>c_1\)} (resp.
 \mbox{\(d^-_{C_{\ell}^M}(x)+d^-_{C_{\ell}^M}(y)>c_{\ell}\)}).  We
 denote by \(w\) (resp. \(r\)) the cardinal of \(W\)
 (resp. \(R\)).

 \begin{claim}%
  \label{claim:bound-on-b}
  We have \(w+r\geq\frac{p(p-1)}{2}\).
 \end{claim}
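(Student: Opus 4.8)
The plan is to bound $w$ and $r$ separately from below by a double-counting argument over the $\binom{p}{2}$ pairs of distinct vertices of $C^M$, and then add the two bounds and substitute inequality~\eqref{eq:assumption}. First I would sum the quantity $d^+_{C_1^M}(x)+d^+_{C_1^M}(y)$ over all unordered pairs $\{x,y\}$ of distinct vertices of $C^M$. Since each vertex of $C^M$ lies in exactly $p-1$ such pairs, this sum equals $(p-1)\,e(C^M,C_1^M)$. On the other hand, a pair in $W$ contributes at most $2c_1$ (each summand being at most $c_1$), while a pair outside $W$ contributes at most $c_1$ by the very definition of $W$. Hence $(p-1)\,e(C^M,C_1^M)\le 2c_1 w + c_1\big(\binom{p}{2}-w\big)=c_1\big(w+\binom{p}{2}\big)$, which rearranges to $w\ge (p-1)\frac{e(C^M,C_1^M)}{c_1}-\binom{p}{2}$. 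The symmetric computation, now summing $d^-_{C_\ell^M}(x)+d^-_{C_\ell^M}(y)$ and using the definition of $R$, yields $r\ge (p-1)\frac{e(C_\ell^M,C^M)}{c_\ell}-\binom{p}{2}$.

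Adding these two inequalities and feeding in \eqref{eq:assumption} gives
\[
 w+r \ \ge\ (p-1)\Big(2k-\frac{c_1+c_\ell}{2}\Big)-2\binom{p}{2}\ =\ (p-1)\Big(2k-\frac{c_1+c_\ell}{2}-p\Big).
\]
Since $(p-1)\cdot\frac{p}{2}=\binom{p}{2}$ is exactly the target, it remains to show that the parenthesised factor is at least $\frac{p}{2}$.

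The structural input for this last step is that the cycles $C,C_1,\dots,C_\ell$ partition the $2k$ vertices of $S$, so that $p+\sum_{i=1}^{\ell}c_i=2k$; in particular $c_1+c_\ell\le \sum_{i=1}^{\ell}c_i=2k-p$, whence $2k-\frac{c_1+c_\ell}{2}\ge k+\frac{p}{2}$. Combining this with the hypothesis $p\le k$ of the lemma gives $2k-\frac{c_1+c_\ell}{2}-p\ge k-\frac{p}{2}\ge \frac{p}{2}$, and therefore $w+r\ge (p-1)\frac{p}{2}=\binom{p}{2}$, as desired.

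I expect the only delicate point to be this final estimate: the double-counting steps are routine, and the whole argument hinges on noticing that it is precisely the inequality $p\le k$ (equivalently $\sum_i c_i\ge p$) that forces the right-hand side of \eqref{eq:assumption} to be as large as $k+\frac{p}{2}$. Without this hypothesis the parenthesised factor can drop below $\frac{p}{2}$, and one can in fact build weightings of the vertices of $C^M$ satisfying \eqref{eq:assumption} yet violating the bound; so the crux is to extract the constraint $c_1+c_\ell\le 2k-p$ from the vertex count and to use $k\ge p$, rather than any subtle property of the connections themselves.
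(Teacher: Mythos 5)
Your proof is correct and follows essentially the same route as the paper: the same double count over the $\binom{p}{2}$ pairs, the same use of inequality~\eqref{eq:assumption}, and the same final step combining $p+c_1+c_\ell\le 2k$ with $k\ge p$ to get $w+r\ge(p-1)(k-p/2)\ge p(p-1)/2$. No gaps.
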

 
 \begin{proof} For every pair \(\lb{}x,y\rb{}\) of distinct vertices of \( V(C^M)\), we have \(d^+_{C_1^M}(x)+d^+_{C_1^M}(y)\le 2c_1\) and, if \(\lb{}x,y\rb{}\) is not a pair of \(W\), we have more precisely \(d^+_{C_1^M}(x)+d^+_{C_1^M}(y)\le c_1\).  Thus, in total,
  \begin{equation}
   \begin{split}
    \sum_{\substack{\lb{}x,y\rb{} \textrm{ pair }\\ \textrm{of }
        V(C^M)} } \Big(d^+_{C_1^M}(x)+d^+_{C_1^M}(y) \Big) &
    =\sum_{\lb{}x,y\rb{} \in W}\Big(d^+_{C_1^M}(x)+d^+_{C_1^M}(y)\Big)+\sum_{\lb{}x,y\rb{}
      \notin W}\Big(d^+_{C_1^M}(x)+d^+_{C_1^M}(y)\Big)\\ & \le
    2wc_1+\Big(\frac{p(p-1)}{2}-w\Big)c_1
    \label{eq:pairs-of-B1}
   \end{split}
  \end{equation}
  and
  \[\sum_{\substack{\lb{}x,y\rb{} \textrm{ pair }\\ \textrm{of } V(C^M)}} \Big(d^+_{C_1^M}(x)+d^+_{C_1^M}(y)\Big)=(p-1)e(C^M,C_1^M)\]
  Thus, we get \[(p-1)\frac{e(C^M,C_1^M)}{c_1}\le
  w+\frac{p(p-1)}{2}\] Similarly, if we do the same reasoning on the arcs from $C_{\ell}^M$ to $C^M$ and $R$, we
  obtain \[(p-1)\frac{e(C_{\ell}^M,C^M)}{c_{\ell}}\le
  r+\frac{p(p-1)}{2}\] Hence, using the
  inequality~\eqref{eq:assumption} we
  have \[(p-1)(2k-\frac{c_1+c_{\ell}}{2})\le w+r+p(p-1)\]
  Finally, since \(C^M\cup C_1^M\cup C_{\ell}^M\) is a subgraph of
  \(D^M\), we have \(p+c_1+c_{\ell}\le 2k\) and so
  \(2k-(c_1+c_{\ell})/2\ge k+p/2\). With the previous inequality we
  obtain \mbox{\(w+r\ge (p-1)(k-p/2)\)}. Finally, using that
  \(k\ge p\), we get the result, that is \mbox{\(w+r\ge
    p(p-1)/2\)}.
 \end{proof}
 \medskip
 
 Now, for every pair \(\lb{}x,x'\rb{}\) of distinct vertices of
 \(C^M\), we color \(\lb{}x,x'\rb{}\) in white if it is a pair of
 \(W\), and we color \(\lb{}x,x'\rb{}\) in red if
 \(\lb{}y,y'\rb{}\in R \) where \(y\) (resp. \(y'\)) is the
 out-neighbor of \(x\) (resp. \(x'\)) along \(C^M\).

 \begin{claim}%
  \label{claim:colouring}
  There exists a pair of vertices colored both in white and red.
 \end{claim}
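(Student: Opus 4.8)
The plan is to prove Claim~\ref{claim:colouring} by a double‑counting argument over the $\frac{p(p-1)}{2}$ unordered pairs of distinct vertices of $C^M$. The first step I would carry out is to pin down how many pairs receive each colour. Since $C^M$ is a cycle of length $p$, the successor map $x\mapsto C^M(x)$ is a bijection of $V(C^M)$ onto itself, so $\{x,x'\}\mapsto\{C^M(x),C^M(x')\}$ is a bijection of the set of pairs onto itself. Hence a pair is coloured red exactly when its image under this bijection lies in $R$, and therefore the number of red pairs equals $|R|=r$; by definition the number of white pairs equals $|W|=w$.

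Next I would compare these counts with the total. There are exactly $\frac{p(p-1)}{2}$ pairs of distinct vertices of $C^M$, and Claim~\ref{claim:bound-on-b} gives $w+r\ge \frac{p(p-1)}{2}$. If no pair were coloured both white and red, the white pairs and the red pairs would form two disjoint subfamilies of all pairs, forcing $w+r\le \frac{p(p-1)}{2}$; combined with the lower bound this yields $w+r=\frac{p(p-1)}{2}$ and a partition of all pairs into white ones and red ones. Thus, by inclusion–exclusion, the number of pairs coloured both colours is at least $w+r-\frac{p(p-1)}{2}$, and whenever the bound of Claim~\ref{claim:bound-on-b} is \emph{strict} the existence of an overlapping pair is immediate. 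Tracking where strictness comes from: the estimate $w+r\ge (p-1)(k-p/2)$ is strict as soon as $k>p$, and the step $p+c_1+c_\ell\le 2k$ is strict as soon as $\ell\ge 3$ (since then $c_1+c_\ell<\sum_i c_i=2k-p$). So in all these cases the claim follows directly.

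The genuinely delicate point, which I expect to be the main obstacle, is the boundary case $w+r=\frac{p(p-1)}{2}$, where inclusion–exclusion alone only guarantees a nonnegative overlap. I would handle it by propagating equality back through the proof of Claim~\ref{claim:bound-on-b}: equality throughout forces $k=p$ and $\ell=2$ with $c_1+c_\ell=p$, together with equality in~\eqref{eq:assumption} and in the pair‑degree estimates of~\eqref{eq:pairs-of-B1}. Equality in~\eqref{eq:pairs-of-B1} means there is no slack at all: every pair is either well connected to $C_1^M$ or well connected from $C_\ell^M$, and the out‑degrees into $C_1^M$ (and in from $C_\ell^M$) are forced into a rigid extremal pattern. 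My approach in this tight regime would be to exploit that rigidity either to exhibit a single pair of consecutive arcs of $C^M$ realising both extremal conditions simultaneously, or, failing that, to show that the forced degree pattern pins the configuration down to an $F$‑type structure incompatible with the standing hypotheses, thereby ruling the equality case out. In summary, the core of the argument is the clean pigeonhole of the first two paragraphs, and the only technical work lies in the equality analysis.
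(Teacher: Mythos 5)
Your first two paragraphs reproduce the paper's opening move exactly: if $w+r>p(p-1)/2$ the pigeonhole principle gives a doubly coloured pair at once, and otherwise Claim~\ref{claim:bound-on-b} forces $w+r=p(p-1)/2$ with equality propagating back through all the estimates (so $p=k$, $c_1+c_\ell=2k-p$, hence $\ell=2$, and equality in~\eqref{eq:pairs-of-B1}). Up to that point you are on the same track as the paper and the deductions are correct.

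The gap is that you stop exactly where the real work begins. In the equality case you only announce a strategy (``exhibit a single pair realising both conditions, or rule the case out as an $F$-type structure''), but you do not carry out either branch, and the second branch is in fact not what happens: the equality case is \emph{not} ruled out; one must locate a doubly coloured pair inside it. The paper's concrete argument, which is missing from your proposal, runs as follows. Equality in~\eqref{eq:pairs-of-B1} forces every pair of $W$ to have both members of out-degree exactly $c_1$ into $C_1^M$, and every pair outside $W$ to have degree sum exactly $c_1$; symmetrically for $R$ and in-degrees from $C_\ell^M$. One first shows $w\neq 0$ (if $w=0$ then every pair lies in $R$, so $C_\ell^M$ dominates $C^M$, and any vertex of $C_\ell^M$ would have out-degree at least $p+1=k+1$, a contradiction), and similarly $r\neq 0$. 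Then, letting $V_W$ be the set of vertices lying in some pair of $W$, the rigid degree pattern forces every vertex outside $V_W$ to have out-degree $0$ into $C_1^M$ and shows there is \emph{at most one} such vertex $a$; symmetrically at most one vertex $b$ lies outside $V_R$. Since $p=k\ge 4$, one can pick a pair of vertices of $C^M$ avoiding both $a$ and the predecessor of $b$ along $C^M$, and that pair is coloured both white and red. Without this counting of exceptional vertices (and the use of $p\ge 4$ to dodge them), the claim is not established in the tight case, which is precisely the case your own reduction shows cannot be avoided.
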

 
 \begin{proof} If \(w+r>p(p-1)/2\), then we have colored more than
   \(p(p-1)/2\) pairs of distinct vertices of \(C^M\). Thus, at least
   one pair have been colored both in white and red, yielding the
   result.
  
  Now, let suppose that \(w+r \le p(p-1)/2\). By
  Claim~\ref{claim:bound-on-b}, it means that we have \(w+r
  =p(p-1)/2\) and that all the inequalities leading to the proof of
  Claim~\ref{claim:bound-on-b} are equalities. In particular, we have
  \(p+c_1+c_\ell=2k\) and \(p=k\). Notice that, as \(c_1\) and
  \(c_\ell \) are at least 2, we have \(k\ge 4\). Moreover,
  as~\eqref{eq:pairs-of-B1} is also an equality, we have
  \(d^+_{C_1^M}(x)+d^+_{C_1^M}(y)= 2c_1\) for every pair
  \(\lb{}x,y\rb{}\) of \(W\) and
  \(d^+_{C_1^M}(x)+d^+_{C_1^M}(y)=c_1\) for every pair
  \(\lb{}x,y\rb{}\) of vertices of \(C_1^M\) which is not in
  \(W\). In particular, if \(\lb{}x,y\rb{}\in W\) we have exactly
  \(d^+_{C_1^M}(x)=c_1\) and \(d^+_{C_1^M}(y)=c_1\).  Similarly, if
  \(\lb{}x,y\rb{}\in R\), then we have
  \(d^-_{C_\ell^M}(x)=c_\ell\) and \(d^-_{C_\ell^M}(y)=c_\ell\).  In
  particular, we can prove that $w\neq 0$. Indeed, if it is not the case, we
  have $r=p(p-1)/2$, that is, every pair of elements of $C^M$ is
  a pair of $R$. Then by the previous remark, every vertex $x$ of
  $C^M$ satisfies \(d^-_{C_\ell^M}(x)=c_\ell\), and $C_\ell^M$
  dominates $C^M$. So, the out-neighborhood of any vertex $y$ of
  $C_\ell^M$ would contain the successor of $y$ along $C_\ell^M$ and
  all the cycle $C^M$, which is of length $p=k$, a contradiction to
  $d^+_{D^M}(y)=k$. Similarly, we have $r\neq 0$.\par\smallskip Now, let
  \(V_W\) (\emph{resp.} \(V_R \)) be the collection of vertices
  which belong to at least one pair of \(W\) (\emph{resp.} \(R
  \)).  Thus, $V_W$ is not empty and for every vertex \(v\in V_W\), we
  have \(d^+_{C_1^M}(v) = c_1\).  As every pair $\{x,y\}$ of distinct
  vertices of $C^M$ satisfies $d^+_{C_1^M}(x)+d^+_{C_1^M}(y)\in
  \{c_1,2c_1\}$, it is easy to see that every vertex \(w \notin V_W\)
  satisfies \(d^+_{C_1^M}(w) = 0\), and that there is at most one
  vertex $a$ which does not belong to $V_W$.  With the same
  arguments we see that there is at most one vertex $b$ which does
  not belong to $V_R$. So, as $p=k\ge 4$ there exists a pair of
  vertices of $C^M$ containing neither $a$ nor the predecessor of
  $b$ along $C^M$. Thus, this pair will colored both in white and red.
 \end{proof}
 \medskip
 In the following, let \(\lb{}y_1, z_1\rb{}\) be a pair of vertices of
 \(V(C^M)\) colored both in white and red and we will denote by
 \(y_\ell \) (resp. \(z_\ell \)) the successor of \(y_1\)
 (resp. \(z_1\)) along \(C^M\). Therefore we have \(\lb{}y_1, z_1\rb{}
 \in W\) and \(\lb{}y_\ell, z_\ell\rb{} \in R \). Notice that
 \(\lb{}y_1, z_1\rb{}\) and \(\lb{}y_\ell, z_\ell\rb{}\) are two
 distinct pairs of vertices but that \(y_\ell = z_1\) or \(z_\ell =
 y_1\) is possible.

 \begin{claim}%
  \label{claim:pathlenght}
  For every \(i\) with \(0 \leq i \leq c_1 - 2\) there exist \(y\)
  and \( z \in C_1^M\) with \(y_1y, z_1z \in A(D^M)\) and such that
  the sub-path of \(C_1^M\) from $y$ to $z$ contains $i$ internal
  vertices exactly.  Similarly for every \(i'\) with \(0 \leq i' \leq
  c_\ell - 2\) there exist \(y'\) and \( z' \in C_\ell^M\) with
  \(y'y_\ell, z'z_\ell \in A(D^M)\) and such that the sub-path of
  \(C_\ell^M\) from $z'$ to $y'$ contains $i'$ internal vertices
  exactly.
  \end{claim}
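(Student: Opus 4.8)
The plan is to reduce each half of the claim to a one-line pigeonhole statement on the cyclic structure of $C_1^M$ (resp.\ $C_\ell^M$). I will carry out the first half in detail; the second is its mirror image. First I would fix a cyclic labelling $v_0,v_1,\dots,v_{c_1-1}$ of the vertices of $C_1^M$, with indices read modulo $c_1$, so that the successor of $v_j$ along $C_1^M$ is $v_{j+1}$. With the definition of internal vertices recalled in Section~\ref{sec:def}, requiring that the sub-path of $C_1^M$ from $y$ to $z$ have exactly $i$ internal vertices is the same as requiring that $z$ be the $(i+1)$-th successor of $y$ along $C_1^M$; that is, writing $y=v_j$ we need $z=v_{j+i+1}$. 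So, setting $d=i+1$, the goal becomes: for every $d$ with $1\le d\le c_1-1$, find an index $j$ with $v_j\in N^+_{C_1^M}(y_1)$ and $v_{j+d}\in N^+_{C_1^M}(z_1)$.

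Next I would record that the hypothesis $\{y_1,z_1\}\in W$ is exactly $d^+_{C_1^M}(y_1)+d^+_{C_1^M}(z_1)>c_1$, i.e.\ $|A|+|B|>c_1$, where $A$ and $B$ are the sets of out-neighbours of $y_1$ and of $z_1$ lying on $C_1^M$, viewed as index subsets of $\mathbb{Z}/c_1\mathbb{Z}$. For a fixed $d$ the shift map $v_j\mapsto v_{j+d}$ is a bijection of $V(C_1^M)$, so the translated set $A+d$ has the same size as $A$. Hence $|A+d|+|B|=|A|+|B|>c_1=|V(C_1^M)|$, which forces $(A+d)\cap B\ne\emptyset$. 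Any index $j+d$ in this intersection gives $y:=v_j\in A$ and $z:=v_{j+d}\in B$, so that $y_1y,\,z_1z\in A(D^M)$ and the sub-path of $C_1^M$ from $y$ to $z$ has exactly $d-1=i$ internal vertices; moreover $1\le d\le c_1-1$ guarantees $y\ne z$.

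Finally, for the second half I would run the symmetric argument on $C_\ell^M$, using in-neighbourhoods and reversing the direction in which the sub-path is read. Here $\{y_\ell,z_\ell\}\in R$ gives $d^-_{C_\ell^M}(y_\ell)+d^-_{C_\ell^M}(z_\ell)>c_\ell$, so the in-neighbour index sets $A'=N^-_{C_\ell^M}(y_\ell)$ and $B'=N^-_{C_\ell^M}(z_\ell)$ satisfy $|A'|+|B'|>c_\ell$; the same shift-and-pigeonhole step (now with $z'$ as the start of the sub-path and $y'$ as its end) produces the required $y',z'$ for every $i'$ with $0\le i'\le c_\ell-2$. I do not expect a genuine obstacle: the mathematical content is a single counting inequality. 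The only points demanding care are the bookkeeping ones — translating ``$i$ internal vertices'' into the cyclic distance $i+1$, and keeping the orientation conventions straight so that out-neighbourhoods are used on the $C_1^M$ side and in-neighbourhoods on the $C_\ell^M$ side.
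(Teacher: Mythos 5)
Your proposal is correct and is essentially the same argument as the paper's: the paper proves the same pigeonhole statement by contradiction (if the shift by $i+1$ of $N^+_{C_1^M}(y_1)$ missed $N^+_{C_1^M}(z_1)$, then $d^+_{C_1^M}(y_1)+d^+_{C_1^M}(z_1)\le c_1$, contradicting $\{y_1,z_1\}\in W$), which is exactly your shift-and-count step phrased contrapositively. Your reading of ``$i$ internal vertices'' as cyclic distance $i+1$ also matches the paper's.
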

 
 \begin{proof} Suppose that for every vertex \(y\) of \(N^+_{C_1^M}(y_1)\),
   the vertex \(z\) which is \(i+1\) vertices away from \(y\) along
   \(C_1^M\) does not belong to \(N^+_{C_1^M}(z_1)\). Thus we have
   \(|N^+_{C_1^M}(z_1)|\leq c_1 - |N^+_{C_1^M}(y_1)|\), which
   contradicts \(d^+_{C_1^M}(y_1)+d^+_{C_1^M}(z_1)>c_1\) as
   \(\lb{}y_1, z_1\rb{}\) is a pair of \(W\). The proof is similar
   for the pair $\{y_\ell,z_\ell\}$.
 \end{proof}

Now, we can construct our 2-cycle-factor from the collection of
cycles.  To do so, we will build a cycle $\gamma$ containing $p$
vertices, such that $D^M[V(D^M) \setminus V(\gamma)]$ contains a
spanning cycle denoted by $\gamma'$. Let $s$ (\emph{resp.} $s'$) be
the number of vertices in the path $P$ (resp. $P'$) along $C^M$ from
$y_\ell$ to $z_1$ (\emph{resp.}  from $z_\ell$ to $y_1$). We have
$s+s'=p$, thus either we have $s\leq p/2$ or $s'\leq p/2$. We will
suppose that the former holds, since an analogous reasoning can be
applied for the other case. In the following, we will denote by $i_0$
the smallest index $j$ such that $s + \sum^j_{i=1}{c_i}>p$. Such index
exists, since $s < p$ and $s + \sum^\ell_{i=1}{c_i} >
\sum^\ell_{i=1}{c_i}=2k-p\ge p$. The cycle $\gamma$ (resp. $\gamma'$)
will be obtained as the union of the path $P$ (resp. $P'$) and a path
$Q$ (resp. $Q'$) well chosen in $C_1^M\dots C_\ell^M$. To design $Q$
and $Q'$ we consider several cases.\medskip

 \begin{figure}
\centering \includegraphics[width=0.8\textwidth]{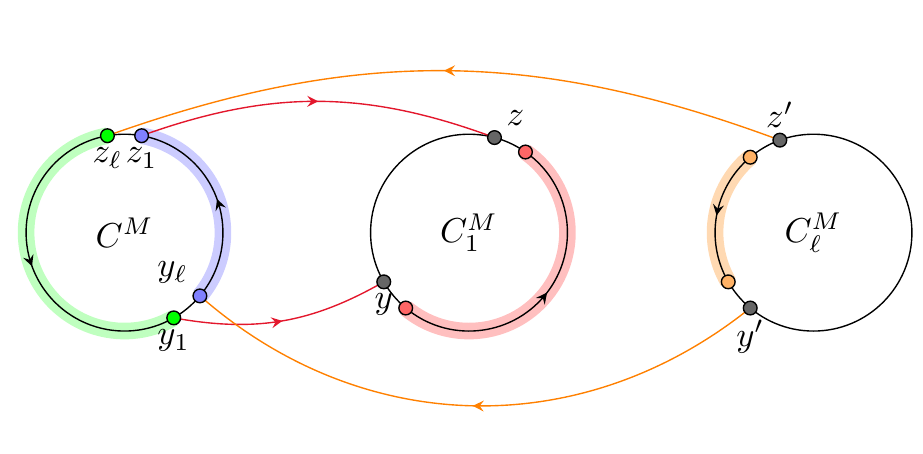}
  \caption{An illustrative case of the proof of
   Claim~\ref{claim:pathlenght}. The red path contains $i$ vertices and the orange one $i'$ vertices.}
  \label{fig:pathlenght}
 \end{figure}
First, assume that we have $1<i_0<\ell$. According to
Claim~\ref{claim:pathlenght} applied with $i=0$ and $i'=0$ there
exists a pair of vertices $\{y, z\}$ of $C_1^M$ such that $y$ is the
successor of $z$ along $C_1^M$ and with $y_1z, z_1y \in
A(D^M)$. Similarly, there is $\{y', z'\}$ in $C_\ell^M$ such $y'$ is
the successor of $z'$ along $C_\ell^M$ and $z'z_\ell, y'y_\ell \in
A(D^M)$. As $C_i^M$ dominates $C_j^M$ for any $i<j$, we consider in
$D^M$ the path $Q$ starting in $y$, containing every vertices of
$C_1^M$ except $z$, every vertices of $C_j^M,$ for any $ 2\leq j \leq
i_0 -1$ and $p - s - \sum^{i_0-1}_{i=1}{c_i}$ consecutive vertices of
$C_{i_0}^M$ and finally ending with the vertex $y'$. Similarly we
construct the path $Q'$ containing $z$, the remaining vertices of
$C_{i_0}^M$, every vertices of $C_j^M,$ for any $ i_0< j < \ell$ and
every vertices of $C_\ell^M$ except $y'$, that is $Q'$ ends in
$z'$. As $z_1y$, $y'y_\ell$, $y_1z$ and $z'z_\ell$ are arcs of $D^M$
$\gamma=P\cup Q$ and $\gamma'=P'\cup Q'$ are cycles and they form a
2-cycle-factor of $D^M$. To conclude this case, it remains to notice
that the number of vertices in $\gamma$ is \mbox{$s+(c_1 -1) +
  (\sum^{i_0-1}_{i=2}{c_i}) + (p - s - \sum^{i_0-1}_{i=1}{c_i}) + 1 =
  p$}.

\medskip

In the case where $i_0 = 1$, Claim~\ref{claim:pathlenght} applied with
$i=p-s-2$ and $i'=0$ asserts that there exist $\{y, z\}$ in $C_1^M$
such that there are $p-s-2$ vertices from $y$ to $z$ along $C_1^M$
with $y_1z, z_1y \in A(D^M)$. As we assume that $p\ge 3$ and we have
$s\le p/2$, we know that $p-s\ge 2$. There also are $\{y', z'\}$ in
$C_\ell^M$ such that $y'$ is the successor of $z'$ along $C_\ell^M$
and $z'z_\ell, y'y_\ell \in A(D^M)$.  Thus we construct $Q$ starting
from $y$, containing $p-s-1$ vertices of $C_1^M$ and ending in
$y'$. The path $Q'$ starts in $z$, contains the remaining vertices of
$C_1^M$, every vertices of $C_j^M,$ for any $ 1< j < \ell$ and every
vertices of $C_\ell^M$ except $y'$. That is, $Q'$ ends in $z'$. As
previously, we easily check that $\gamma=P\cup Q$ and $\gamma'=P'\cup
Q'$ form a 2-cycle-factor of $D^M$ of lengths $p$ and $2k-p$.\par
\medskip
The case $i_0 = \ell$ is symmetric to the previous one.
\medskip

To check the last part of the statement, we have to guarantee that
$D[C']$ is not isomorphic to $F_{2p}$, where $C'$ denote the cycle of
$D$ corresponding to $\gamma$ (i.e. such that $C'^M=\gamma$). To do
so, notice that, in all cases, we added $y$ and $y'$ to $P$ in order
to close $\gamma$. In $D^M$, as $y\in C_1^M$ and $y'\in C_\ell^M$, we
obtain that $yy'$ is an arc of $D^M$ and $y'y$ is not an arc of
$D^M$. Then $C'$ contains four vertices $y$, $C'(y) , y'$ and $C'(y')$
such that $yC(y)$, $C'(y)y'$, $y'C'(y')$ and $yy'$ are arcs of
$D$. However $F_{2p}$ does not contain such a subdigraph. So $D[C']$
is not isomorphic to $F_{2p}$.
\end{proof} 

\section{Proof of Theorem~\ref{theo:2cf}}
\label{sec:thm}

We prove a slightly stronger version of Theorem~\ref{theo:2cf} where
we ask for the first cycle of the cycle-factor to be different from
$F_{2p}$ if $p$ is even (notice that $F_{2p}$ is not defined for odd
$p$). Namely, we prove the following result.

\begin{theorem}%
 \label{theo:2cfbis}
 For $k\ge 3$ let \(D\) be a \(k\)-regular bipartite tournament not
 isomorphic to \(F_{4k}\). Then for every \(p\) with \(3\le p\le k\),
 \(D\) has a 2-cycle-factor \((C_1,C_2)\) where $C_1$ has length
 \(2p\) and if $p$ is even, $C_1$ is not isomorphic to $F_{2p}$.
\end{theorem}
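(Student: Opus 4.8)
The plan is to prove Theorem~\ref{theo:2cfbis} by induction on $p$, taking the result of \cite{BLH14} for $p=3$ as the base case; since $3$ is odd there is no $F_{2p}$ to avoid, so the induction starts cleanly. For the inductive step I would fix $p\ge 4$, choose a perfect matching $M$ from $S$ to $T$, and work entirely in the contracted digraph $D^M$, which has $2k$ vertices, all in- and out-degrees equal to $k$, and in which a cycle of length $\ell$ lifts to a cycle of length $2\ell$ in $D$. By the reduction explained in Section~\ref{sec:def} it then suffices to produce a cycle-factor of $D^M$ containing one cycle of length exactly $p$ whose lift is not isomorphic to $F_{2p}$ when $p$ is even, because Lemma~\ref{lem:cf-to-2cf} will afterwards merge all the remaining cycles into a single complementary cycle while preserving the non-isomorphism.

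The engine of the induction is a single-vertex growth step. Applying the induction hypothesis at $p-1$ gives a $2$-cycle-factor of $D$ whose first cycle has length $2(p-1)$; in $D^M$ this is a cycle $\gamma_1=v_1\cdots v_{p-1}$ together with a cycle-factor of the remaining set $R=V(D^M)\setminus\gamma_1$ (in fact a single cycle $\gamma_2$). I want to move one vertex $u\in R$ into $\gamma_1$. A pigeonhole argument on the cyclic positions of $\gamma_1$ shows that $u$ can be inserted between some consecutive pair $v_i,v_{i+1}$ (that is $v_iu,uv_{i+1}\in A(D^M)$) whenever $d^+_{\gamma_1}(u)+d^-_{\gamma_1}(u)\ge p$, producing a cycle $\gamma_1'$ of length $p$. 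To keep a cycle-factor on the rest I would ask in addition that the predecessor and the successor of $u$ along $\gamma_2$ are joined by an arc, so that deleting $u$ and taking the shortcut leaves $\gamma_2$ a cycle; then $\gamma_1'$ together with this shortened cycle is the desired cycle-factor of $D^M$.

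The $F_{2p}$ condition for even $p$ then comes essentially for free: since $v_i$ and $v_{i+1}$ are consecutive on $\gamma_1$, the arc $v_iv_{i+1}$ survives as a chord of $\gamma_1'$, so in $D$ the four vertices $M(v_i),u,M(u),v_{i+1}$ carry the arcs $M(v_i)u$, $uM(u)$, $M(u)v_{i+1}$ and $M(v_i)v_{i+1}$, a pattern that $F_{2p}$ does not contain, exactly as in the proof of Lemma~\ref{lem:cf-to-2cf}; hence the lift of $\gamma_1'$ is not isomorphic to $F_{2p}$. For the existence of insertable vertices I would use that $\sum_{u\in\gamma_2}\big(d^+_{\gamma_1}(u)+d^-_{\gamma_1}(u)\big)=e(\gamma_1,\gamma_2)+e(\gamma_2,\gamma_1)=2(p-1)k-2e(\gamma_1)$ by regularity, to force enough vertices of $\gamma_2$ to satisfy the degree bound.

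The hard part will be securing a vertex $u$ that meets both requirements simultaneously, together with the accompanying exceptional-case analysis. Insertability and the shortcut condition pull in different directions, and the counting above degrades when $\gamma_1$ is internally very dense or when $p=k$ (so that $R$ is small and removing $u$ threatens the cycle-factor on the remainder). The crux is to show that if no admissible $u$ exists, then the interaction between $\gamma_1$ and $\gamma_2$ is so rigid — no shortcut $u^-\to u^+$ on $\gamma_2$, and all arcs between the two cycles forced into a single direction — that $D$ must be isomorphic to $F_{4k}$, contradicting the hypothesis. I expect this rigidity argument, rather than the insertion or the merging, to be where essentially all of the work lies; once it is in place, Lemma~\ref{lem:cf-to-2cf} closes the induction.
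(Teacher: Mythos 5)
Your frame (induction on $p$ from the $p=3$ base case, working in a contracted digraph, closing with Lemma~\ref{lem:cf-to-2cf}, and the chord trick to rule out $F_{2p}$) agrees with the paper, but the inductive step as you describe it has a genuine gap, and the fallback you propose aims at the wrong target. First, the counting does not force an insertable vertex: $\sum_{u\in\gamma_2}\bigl(d^+_{\gamma_1}(u)+d^-_{\gamma_1}(u)\bigr)=2(p-1)k-2e(\gamma_1)\le 2(p-1)(k-1)$, and dividing by $|\gamma_2|=2k-p+1$ gives an average below $p$ unless $(p-1)(p-2)\ge 2k$; so for $p$ small relative to $k$ the average vertex of $\gamma_2$ fails your insertion threshold and nothing guarantees that any vertex meets it. Worse, inserting a single vertex of $C_2^M$ into $C_1^M$ requires that vertex to have both an in-neighbour and an out-neighbour in $C_1^M$, and this can fail simultaneously for \emph{every} vertex of $C_2^M$: the paper's Claim~\ref{claim:not-Pdown} shows exactly that when property ${\cal P}_{\rm down}$ fails, every vertex of $C_2^{M_u}$ either anti-dominates $C_1^{M_u}$ or is anti-dominated by it, so no single-vertex insertion is available in that contracted digraph at all. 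This is why the paper keeps both matchings $M_u$ and $M_d$ in play and introduces the switch-along-an-anti-cycle operation (Lemma~\ref{lem:switch}) to rebuild the matching when direct insertion is blocked, together with Theorem~\ref{theo:haggvist-manoussakis} to re-merge the resulting fragments.

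Second, your endgame --- ``if no admissible $u$ exists then $D\cong F_{4k}$'' --- is not the correct dichotomy. The genuine obstructions are local: $D[C_2]$ can be isomorphic to $F_{2k-p}$ (so that $D^{M_u}[C_2^{M_u}]$ is a complete bipartite digraph and neither the insertion nor the shortcut behaves as you hope) while $D\not\cong F_{4k}$. This is the paper's entire Case~B, which does not end in a contradiction but constructs the good cycle-factor by a different route, including a re-application of the induction hypothesis \emph{inside} the sub-tournament $D[C_1]$; similarly the strengthened hypothesis ``$C_1\not\cong F_{2p}$'' is needed to kill a second local obstruction in Case~A. Everything you defer as ``where essentially all of the work lies'' is indeed where all of the work lies, and the rigidity you hope for does not collapse to $D\cong F_{4k}$; as it stands the proposal is an outline of the easy half of the argument, not a proof.
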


We prove this statement by induction on $p$.  By the result of Bai, Li
and He~\cite{BLH14} the statement is true for $p=3$ and the basis case
for the induction holds.  So for $3\leq p< k$ we consider $D=(V,A)$ a
$k$-regular bipartite tournament which admits a
$(2p,4k-2p)$-cycle-factor $(C_1,C_2)$ where $C_1$ is not isomorphic to
$F_{2p}$ if $p$ is even. In particular, notice that $D$ is not
isomorphic to $F_{4k}$.  We want to show that $D$ admits a
$(2(p+1),4k-2(p+1))$-cycle-factor whose first cycle is not isomorphic
to $F_{2(p+1)}$ if $p+1$ is even. In the following we call a {\it good
  cycle-factor} such a cycle-factor.\\ We denote by $(S,T)$ the
bipartition of $D$ and by $(C_1,C_2)$ the $(2p,4k-2p)$-cycle-factor of
$D$, with $D[C_1]$ not isomorphic to $F_{2p}$. We also denote by $M_u$
the arcs of $C_1\cup C_2$ going (up) from $S$ to $T$ and by $M_d$ the
arcs of $C_1\cup C_2$ going (down) from $T$ to $S$. It is clear that
$M_u$ ans $M_d$ are perfect matchings of $D$ and that their union is
$C_1\cup C_2$. For $M$ being either $M_u$ or $M_d$, the digraph $D^M$
admits the 2-cycle-factor $(C_1^M,C_2^M)$ with $|C_1^M|=p$ and
$|C_2^M|=2k-p$. Notice that, for even $p$, having $C_1$ not isomorphic
to $F_{2p}$ is equivalent to having $D^M[C_1^M]$ being not isomorphic
to the balanced complete bipartite digraph on $p$ vertices.\\ To form
a good cycle-factor from $(C_1^M,C_2^M)$, we will have a case-by-case
study according to the structure of the non-arc in the digraph
$D^M$. Prior to this study, we introduce some needed tools.

\subsection{Switch along an anti-cycle}

In this subsection, we first consider a matching $M$ of the
$k$-regular bipartite tournament $D$ made from arcs from $S$ to $T$
and we define an operation allowing some local change in $M$.

\begin{lemma}%
 \label{lem:switch}
If \(D^M\) contains an anti-cycle \(u_1,\dots ,u_t\) with \(t\ge 2\),
then the set \(M'\) of arcs defined in $D$ by \(M'=(M\setminus
\bigcup_{i=1}^{t} u_{i}M(u_i)) \cup (\bigcup_{i=1}^{t-1}
u_{i+1}M(u_{i}) \cup u_1M(u_t))\) is a perfect matching of
\(D\). Moreover, for every \(v\notin \lb{}u_1,\dots ,u_t\rb{}\), we
have \(N^+_{D^{M'}}(v)=N^+_{D^{M}}(v)\) and, for every \(i\) with \( 2
\leq i \leq t\), we have
\mbox{\(N^+_{D^{M'}}(u_i)=N^+_{D^{M}}(u_{i-1})\)} and
\(N^+_{D^{M'}}(u_1)=N^+_{D^{M}}(u_{t})\).
\end{lemma}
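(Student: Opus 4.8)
The plan is to exploit the two basic facts about the contraction construction recorded in Section~\ref{sec:def}: first, that for any matching $M$ and any vertex $v$ one has the identity $N^+_{D^M}(v) = N^+_D(M(v))$; and second, that an anti-arc of $D^M$ translates into a reversed arc of $D$. Concretely, since $D$ is a bipartite tournament, for each $i$ exactly one of $M(u_i)u_{i+1}$ and $u_{i+1}M(u_i)$ is an arc of $D$; by definition of $D^M$ the hypothesis $u_iu_{i+1}\notin A(D^M)$ means precisely $M(u_i)u_{i+1}\notin A(D)$, hence $u_{i+1}M(u_i)\in A(D)$. The same reasoning applied to the closing anti-arc $u_tu_1$ gives $u_1M(u_t)\in A(D)$. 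Thus every arc added to form $M'$ genuinely belongs to $D$.

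Next I would check that $M'$ is a perfect matching. The arcs removed from $M$ are exactly $u_1M(u_1),\dots,u_tM(u_t)$, which saturate the vertices $u_1,\dots,u_t$ on the $S$-side and $M(u_1),\dots,M(u_t)$ on the $T$-side. The arcs added, namely $u_{i+1}M(u_i)$ for $1\le i\le t-1$ together with $u_1M(u_t)$, use each tail $u_1,\dots,u_t$ exactly once and each head $M(u_1),\dots,M(u_t)$ exactly once; since the $u_i$ are distinct (being the vertices of an anti-cycle) and the $M(u_i)$ are distinct (as $M$ is a matching), these added arcs form a perfect matching on the same $2t$ vertices that the removed arcs covered. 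Because $M'$ agrees with $M$ everywhere else, $M'$ is a perfect matching of $D$. In terms of the induced map, this rotation reassigns $M'(u_1)=M(u_t)$ and $M'(u_i)=M(u_{i-1})$ for $2\le i\le t$, while $M'(v)=M(v)$ for all $v\notin\{u_1,\dots,u_t\}$.

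Finally, the out-neighborhood statements follow immediately from the identity $N^+_{D^{M'}}(v)=N^+_D(M'(v))$. For $v\notin\{u_1,\dots,u_t\}$ we have $M'(v)=M(v)$, so $N^+_{D^{M'}}(v)=N^+_D(M(v))=N^+_{D^M}(v)$. For $2\le i\le t$, since $M'(u_i)=M(u_{i-1})$ we get $N^+_{D^{M'}}(u_i)=N^+_D(M(u_{i-1}))=N^+_{D^M}(u_{i-1})$, and for $u_1$, since $M'(u_1)=M(u_t)$, we get $N^+_{D^{M'}}(u_1)=N^+_D(M(u_t))=N^+_{D^M}(u_t)$.

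This is essentially a verification lemma, so there is no serious conceptual obstacle; the only points requiring care are the index bookkeeping in the rotation and the correct translation between anti-arcs of $D^M$ and reversed arcs of $D$. In particular one must use that the out-neighborhood in a contracted digraph depends only on the $T$-endpoint of the matching arc and not on its $S$-endpoint, which is exactly why the out-neighborhoods simply \emph{follow} the reassigned matches under the rotation while leaving all other vertices untouched.
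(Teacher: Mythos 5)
Your proof is correct and follows essentially the same route as the paper's: translate the anti-arcs of $D^M$ into reversed arcs of $D$ to see that the added arcs exist, observe that the rotation reassigns the matching as $M'(u_{i})=M(u_{i-1})$ (and $M'(u_1)=M(u_t)$) while fixing everything else, and read off the out-neighborhoods from the identity $N^+_{D^{M'}}(v)=N^+_D(M'(v))$. The paper's proof is just a terser version of the same verification.
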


\begin{proof} If \(u_1,\dots ,u_t\) is an anti-cycle of \(D^M\) it follows
  by definition that \(u_{i+1}M(u_i)\) is an arc of \(D\) for every
  \(i\) such that \(1 \leq i \leq t-1\) as well as
  \(u_1M(u_t)\). Thus, \(M'\) is a perfect matching of \(D\). For
  every \(i\) with \(1 \leq i \leq t-1\) we have
  \(M'(u_{i+1})=M(u_i)\) in \(D\) (and \(M'(u_1)=M(u_t)\)), then
  \(N^+_{D^{M'}}(u_{i+1})=N^+_{D^{M}}(u_{i})\) (and
  \(N^+_{D^{M'}}(u_{1})=N^+_{D^{M}}(u_{t})\)). The out-neighborhood
  of the other vertices are unchanged.
\end{proof}

The ``shifting'' operation between matchings \(M\) and \(M'\)
described in the previous lemma is called a \emph{switch along the
  anti-cycle} \(u_1,\dots,u_t\). The first easy observation we can
make on the new contracted digraph is the following.

\begin{corollary}%
 \label{cor:switch-cycle-factor}
 If \(D^M\) has a cycle-factor and contains an anti-cycle \(u_1,\dots
 ,u_t\) with \(t\ge 2\), then the digraph obtained after the switch
 along the anti-cycle \(u_1,\dots ,u_t\) has a cycle-factor.
\end{corollary}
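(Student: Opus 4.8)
The plan is to show that performing a switch along an anti-cycle preserves the existence of a cycle-factor. The key tool is Lemma~\ref{lem:switch}, which tells us exactly how the out-neighborhoods change: for vertices outside the anti-cycle nothing changes, while the out-neighborhoods of the anti-cycle vertices are merely permuted cyclically (each $u_i$ inherits the old out-neighborhood of $u_{i-1}$, with $u_1$ inheriting that of $u_t$). The central observation is that a cycle-factor of a $k$-regular digraph on $2k$ vertices is exactly a spanning set of vertex-disjoint cycles, equivalently a permutation $\sigma$ of $V(D^M)$ such that $\sigma(v)\in N^+_{D^M}(v)$ for every vertex $v$. So the real content is that we can transport the old cycle-factor across the switch by reindexing.

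Concretely, first I would take the given cycle-factor of $D^M$ and view it as a function $f$ assigning to each vertex $v$ its successor $f(v)\in N^+_{D^M}(v)$ in the cycle-factor. Then I would define a candidate successor function $g$ on $D^{M'}$ by setting $g(v)=f(v)$ for $v\notin\{u_1,\dots,u_t\}$, and for the anti-cycle vertices I would use the cyclic shift: since $N^+_{D^{M'}}(u_i)=N^+_{D^{M}}(u_{i-1})$, the out-arc $f(u_{i-1})$ that was available from $u_{i-1}$ in $D^M$ is now available \emph{from} $u_i$ in $D^{M'}$. So I would set $g(u_i)=f(u_{i-1})$ for $2\le i\le t$ and $g(u_1)=f(u_t)$. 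By Lemma~\ref{lem:switch} each value $g(v)$ indeed lies in $N^+_{D^{M'}}(v)$, so every arc used by $g$ is a genuine arc of $D^{M'}$.

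The step I expect to need the most care is verifying that $g$ is again a permutation of $V(D^M)=V(D^{M'})$, i.e.\ that $g$ is a bijection, so that its functional graph is once more a disjoint union of cycles and hence a cycle-factor. Since $f$ is a bijection and the out-neighborhoods are only permuted among the $u_i$ (and the underlying vertex set is unchanged by the switch), $g$ is obtained from $f$ by precomposition with the cyclic permutation $u_1\mapsto u_2\mapsto\cdots\mapsto u_t\mapsto u_1$ on the domain; the composition of a bijection with a permutation is a bijection, so $g$ is a bijection. The only subtlety is to confirm that no value is used twice and that every vertex is hit: this follows because we have only relabeled which vertex \emph{emits} each of the arcs $f(u_1),\dots,f(u_t)$, not changed the multiset of used arcs or their heads. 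Therefore $g$ defines a cycle-factor of the contracted digraph obtained after the switch, which is exactly what the corollary asserts.

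A lighter, more combinatorial alternative would be to avoid the permutation language entirely and instead argue directly on cycles: describe how each cycle of the original cycle-factor is transformed by replacing, along the anti-cycle, each used out-arc of $u_{i-1}$ with the corresponding out-arc now emitted by $u_i$, and check that the result is still a spanning family of disjoint cycles. I expect either framing to work, but the permutation viewpoint makes the bijectivity argument cleanest and sidesteps the bookkeeping of how individual cycles may split or merge under the switch.
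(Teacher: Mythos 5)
Your proposal is correct and is essentially the paper's own argument: the paper transports the arc set of the cycle-factor across the switch and observes that, since the out-neighborhoods of the $u_i$ are only cyclically permuted (Lemma~\ref{lem:switch}), every vertex of the resulting subdigraph still has in-degree and out-degree $1$, which is exactly your bijectivity-of-the-successor-function argument in degree language. (A trivial remark: the relabelling you need is $g(u_i)=f(u_{i-1})$, i.e.\ precomposition with the shift $u_i\mapsto u_{i-1}$, the inverse of the cycle you wrote; this does not affect the argument.)
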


\begin{proof}
 Let \(C\) be the anti-cycle \(u_1,\dots, u_t\) and let \(M'\) be the
 perfect matching obtained after the switch along \(C\). Moreover, let
 \(\Delta{}\) be the subdigraph induced by the arcs of the cycle-factor in \(D^M\), and let \(\Delta'\) be the subdigraph induced by
 the switch of \(\Delta{}\) along \(C\). By Lemma~\ref{lem:switch}, we
 make a cyclic permutation on the out-neighborhoods of the vertices of
 \(C\). So for every vertex \(x\) in \(S\) we have
 \(d^+_{\Delta'}(x)=d^+_{\Delta}(x)=1\) and
 \(d^-_{\Delta'}(x)=d^-_{\Delta}(x)=1\). Therefore \(\Delta'\) is a
 cycle-factor of \(D^{M'}\).
\end{proof}

Claim~\ref{claim:good-anti-cycle} below is our main application of a
switch along an anti-cycle in a contracted digraph. Before stating it,
we need the following simple result.

\begin{lemma}
\label{lem:SommeCF}
If $D^M$ contains a cycle-factor $\{B_1,\dots , B_l, B_{l+1}, \dots
B_{l'}\}$ such that $|B_1|+\dots +|B_l|=p+1$ and $D^M[B_1\cup \dots
  \cup B_l]$ is strongly connected and not isomorphic to a balanced
complete bipartite digraph, then $D$ admits a good cycle-factor.
\end{lemma}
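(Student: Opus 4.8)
The plan is to lift the configuration from the contracted digraph $D^M$ back to $D$, recognize a Hamiltonian cycle on the ``heavy'' part $B_1\cup\dots\cup B_l$, and then invoke Lemma~\ref{lem:cf-to-2cf} to absorb all the remaining cycles into a single complementary cycle while preserving the non-$F$ condition. The merging is thus outsourced to the already-proven Lemma~\ref{lem:cf-to-2cf}, and the real work is to produce a suitable first cycle.

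First I would set $U=V(B_1)\cup\dots\cup V(B_l)\subseteq S$ and $\hat U=U\cup M(U)$, so $|U|=p+1$ and $|\hat U|=2(p+1)$. Since $u_1,\dots,u_t$ is a cycle of $D^M$ exactly when $u_1,M(u_1),\dots,u_t,M(u_t)$ is a cycle of $D$, the cycles $B_1,\dots,B_l$ lift to vertex-disjoint cycles of $D$ spanning $\hat U$, and $B_{l+1},\dots,B_{l'}$ lift to vertex-disjoint cycles spanning $\hat Y=V(D)\setminus\hat U$; in particular $D[\hat U]$ is a bipartite tournament admitting a cycle-factor. Next I would check that ``$D^M[U]$ strongly connected'' forces $D[\hat U]$ strongly connected: an arc $u\to v$ of $D^M[U]$ lifts to the path $u,M(u),v$ in $D[\hat U]$, so every dipath of $D^M[U]$ lifts to one of $D[\hat U]$, and since $|U|=p+1\ge 4$ every vertex of $U$ has an out-neighbour in $D^M[U]$, which together with the matching arcs $u\to M(u)$ lets one reach and leave the vertices of $M(U)$ as well. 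With strong connectivity established, Theorem~\ref{theo:haggvist-manoussakis} applied to $D[\hat U]$ yields a Hamiltonian cycle $H$: in the second alternative a cycle-factor $C_1,\dots,C_m$ with no arc from $C_j$ to $C_i$ for $i<j$ must have $m=1$ (as $m\ge 2$ contradicts strong connectivity), hence again a Hamiltonian cycle. Thus $H$ is a cycle of $D$ of length $2(p+1)$.

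The key structural step is to show $D[H]=D[\hat U]$ is not isomorphic to $F_{2(p+1)}$ (needed only when $p+1$ is even), which I would prove in contrapositive form: if $D[\hat U]\cong F_{2(p+1)}$ then $D^M[U]$ is the balanced complete bipartite digraph, contradicting the hypothesis. Writing the four parts of $F_{2(p+1)}$ as $V_1,V_2,V_3,V_4$ with all arcs $V_1\to V_2\to V_3\to V_4\to V_1$, the underlying bipartite graph (with parts $V_1\cup V_3$ and $V_2\cup V_4$) is connected, so this bipartition is unique; since $U\subseteq S$ and $M(U)\subseteq T$ lie on opposite sides, the isomorphism identifies $\{U,M(U)\}$ with $\{V_1\cup V_3,\,V_2\cup V_4\}$. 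The only arcs of $F_{2(p+1)}$ from the $U$-side to the $M(U)$-side are then $V_1\to V_2$ and $V_3\to V_4$, so $M$ matches $V_1$ to $V_2$ and $V_3$ to $V_4$; contracting via $N^+_{D^M}(u)=N^+_D(M(u))$ and the arc sets $V_2\to V_3$, $V_4\to V_1$ produces exactly all digons between the two halves of $U$ and no arc inside either half, i.e.\ the balanced complete bipartite digraph. This is the contradiction sought, so $D[H]\not\cong F_{2(p+1)}$.

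Finally I would assemble $H$ together with the lifts of $B_{l+1},\dots,B_{l'}$ into a cycle-factor of $D$ whose distinguished cycle $H$ has length $2(p+1)$ with $D[H]\not\cong F_{2(p+1)}$. Since $4\le p+1\le k$ in the induction step, Lemma~\ref{lem:cf-to-2cf} applies to this cycle-factor and delivers a $(2(p+1),\,|V(D)|-2(p+1))$-cycle-factor $(C',C'')$; moreover, as $p+1\ge 3$ and $D[H]\not\cong F_{2(p+1)}$, the ``moreover'' clause of Lemma~\ref{lem:cf-to-2cf} guarantees that when $p+1$ is even $D[C']$ is still not isomorphic to $F_{2(p+1)}$. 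Hence $(C',C'')$ is a good cycle-factor, as required. The step I expect to be the main obstacle is the structural identification of ``$D^M[U]$ balanced complete bipartite'' with ``$D[\hat U]\cong F_{2(p+1)}$'': one must argue that the bipartition of $F$ is forced and then track precisely how the matching arcs contract, whereas strong connectivity and the final merging are comparatively routine once Theorem~\ref{theo:haggvist-manoussakis} and Lemma~\ref{lem:cf-to-2cf} are in hand.
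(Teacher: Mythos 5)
Your proposal is correct and follows essentially the same route as the paper's proof: lift the cycles $B_1,\dots,B_{l'}$ back to $D$, use strong connectivity together with Theorem~\ref{theo:haggvist-manoussakis} to obtain a Hamiltonian cycle of length $2(p+1)$ on the lifted part, observe that it cannot induce $F_{2(p+1)}$, and finish with Lemma~\ref{lem:cf-to-2cf}. The only difference is that you spell out in detail the two steps the paper merely asserts (the lifting of strong connectivity and the equivalence between $D^M[U]$ being a balanced complete bipartite digraph and $D[\hat U]\cong F_{2(p+1)}$), and both of your verifications are sound.
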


\begin{proof}
For $i=1,\dots ,l'$ we denote by $\tilde{B}_i$ the cycle of $D$ such
that $\tilde{B}_i^M=B_i$.  Since $D^M[B_1\cup \dots \cup B_l]$ is
strongly connected, the digraph $D[\tilde{B}_1\cup \dots \cup
  \tilde{B}_l]$ is also strongly connected and admits a
cycle-factor. So, by Theorem~\ref{theo:haggvist-manoussakis}, it has a
Hamiltonian cycle $C$ which is of length $2p+2$.  Moreover, as
$D^M[B_1\cup \dots \cup B_l]$ is not isomorphic to a balanced complete
bipartite digraph, then $D[C]$ is not isomorphic to $F_{2p+2}$ if $p$
is odd.  Using Lemma~\ref{lem:cf-to-2cf} on the cycle-factor $\{C,
\tilde{B}_{l+1}, \dots, \tilde{B}_{l'}\}$, it proves that $D$ contains
a good cycle-factor.
\end{proof}

Now, back to the proof of Theorem~\ref{theo:2cfbis}, we see a first
case where it is possible to extend the cycle $C_1$ to obtain a good
cycle-factor. Recall that the digraph $D^{M}$ contains the
2-cycle-factor $(C_1^{M},C_2^{M})$, where here again $M$ stands for
the matching $M_u$ or the matching $M_d$ of $D$.

\newtheorem{claimt}{Claim}[theorem]

\begin{claimt}
 \label{claim:good-anti-cycle}
 If $D^{M}$ contains an anti-cycle $H$ such that $H\setminus
 V(C_1^{M})$ is an anti-path from $x$ to $y$ with $x = C_2^{M}(y)$
 then $D$ admits a good cycle-factor.
\end{claimt}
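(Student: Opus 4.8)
The plan is to use the anti-cycle $H$ to migrate the vertex $x$ from the ``$C_2$-side'' to the ``$C_1$-side'' of the cycle-factor, so as to produce a first cycle of length $p+1$, and then to invoke Lemma~\ref{lem:SommeCF}. First I would perform a \emph{switch along} $H$ as in Lemma~\ref{lem:switch}, obtaining a new matching $M'$ and contracted digraph $D^{M'}$. Since $D^{M}$ carries the cycle-factor $(C_1^{M},C_2^{M})$, Corollary~\ref{cor:switch-cycle-factor} guarantees that $D^{M'}$ inherits a cycle-factor $\Delta'$, namely the switch of $C_1^{M}\cup C_2^{M}$. Writing $\sigma$ for the successor map of $C_1^{M}\cup C_2^{M}$ and $\sigma'$ for that of $\Delta'$, Lemma~\ref{lem:switch} gives $\sigma'(u)=\sigma(u)$ off $H$ and $\sigma'(u_i)=\sigma(u_{i-1})$ along $H$. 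Denoting by $v_1,\dots,v_r$ the vertices of $H\cap V(C_1^{M})$ in the order they occur on $H$ (so $H$ reads $x,\dots,y,v_1,\dots,v_r$), the decisive point is that the vertex of $H$ preceding $v_1$ is $y$: hence $\sigma'(v_1)=\sigma(y)=C_2^{M}(y)=x$, while $\sigma'(x)=\sigma(v_r)=C_1^{M}(v_r)$. This is exactly where the hypothesis $x=C_2^{M}(y)$ is used, and it draws $x$ into the $C_1$-side.

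Next I would check that $W:=V(C_1^{M})\cup\{x\}$, which has exactly $p+1$ vertices, is invariant under $\sigma'$: indeed $\sigma'(x)=C_1^{M}(v_r)\in V(C_1^{M})$, $\sigma'(v_1)=x\in W$, $\sigma'(v_j)=C_1^{M}(v_{j-1})\in V(C_1^{M})$ for $j\ge 2$, and $\sigma'(u)=C_1^{M}(u)$ for every other $u\in V(C_1^{M})$. Consequently $\Delta'$ restricts to a cycle-factor of $D^{M'}[W]$ (and, symmetrically, to one of $D^{M'}[V(C_2^{M})\setminus\{x\}]$). The aim is then to apply Lemma~\ref{lem:SommeCF}, taking for the $B_i$ the cycles of $\Delta'$ contained in $W$: their total length is $p+1$, so it remains only to verify that $D^{M'}[W]$ is strongly connected and, when $p+1$ is even, not isomorphic to a balanced complete bipartite digraph.

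The strong connectivity of $D^{M'}[W]$ is the main obstacle. When $H\cap V(C_1^{M})$ is a single vertex $v$, there is nothing to do: the formulas above give $\sigma'(v)=x$ and $\sigma'(x)=C_1^{M}(v)$, so $\Delta'$ restricted to $W$ is the single $(p+1)$-cycle obtained from $C_1^{M}$ by inserting $x$ between $v$ and $C_1^{M}(v)$, and $D^{M'}[W]$ is trivially strong. In general, however, the rerouting of $v_1,\dots,v_r$ can split $W$ into several $\sigma'$-cycles, and one genuinely has to merge them. Here I would exploit that $D^{M'}[W]$ is the contraction of an induced bipartite sub-tournament carrying a cycle-factor, so Theorem~\ref{theo:haggvist-manoussakis} applies: either $D^{M'}[W]$ has a hamiltonian cycle---whence it is strong---or it decomposes into strong components with no arc from a later to an earlier one. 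The plan is to exclude the latter case; I would try either to minimize $|H\cap V(C_1^{M})|$ and argue that a non-trivial decomposition yields a shorter admissible anti-cycle, or to use a counting argument combining the $k$-regularity of $D^{M'}$, the bound $|W|=p+1\le k+1$, and the extra arcs inside $W$ forced after the switch by the anti-cycle relations $y\not\to v_1$ and $v_r\not\to x$, so as to bound the arcs leaving a terminal component and reach a contradiction. I expect essentially all the difficulty of the statement to lie in this step.

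Finally, once $D^{M'}[W]$ is known to be strong, the condition ``not balanced complete bipartite'' (relevant only for even $p+1$) would be obtained as in the last paragraph of the proof of Lemma~\ref{lem:cf-to-2cf}: the arcs $v_1\to x$ and $x\to C_1^{M}(v_r)$, together with the absence of a symmetric partner of $v_1\to x$, exhibit a four-vertex configuration that $F_{2(p+1)}$ does not contain. Lemma~\ref{lem:SommeCF} then produces a good cycle-factor of $D$, which is what we want.
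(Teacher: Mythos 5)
Your set-up is exactly the paper's: you perform the switch along $H$, observe via Lemma~\ref{lem:switch} that the successor map of the switched cycle-factor sends $v_1$ to $x$ and $x$ to $C_1^{M}(v_r)$, and conclude that $W=V(C_1^{M})\cup\{x\}$ carries a cycle-factor on $p+1$ vertices. But the step you yourself flag as the main obstacle --- strong connectivity --- is precisely where your proposal stops being a proof: neither of the two strategies you sketch (minimising $|H\cap V(C_1^{M})|$, or a degree-counting argument on terminal components) is carried out, and this is not a routine verification that can be waved through. Worse, you are aiming at the wrong target. You want $D^{M'}[W]$ to be strongly connected so that Lemma~\ref{lem:SommeCF} applies, but there is no reason for the \emph{contracted} digraph on $W$ to be strong: the arcs that glue the pieces together are not visible in $D^{M'}$ at all. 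The paper instead proves that the \emph{expanded} digraph $D[\tilde{B_1}]$ on $2(p+1)$ vertices is strong, and it does so with a short explicit argument: the arcs $a_iM(a_i)$ for $a_i\in H\cap V(C_1^{M})$ are arcs of $D$ (from $S$ to $T$) that were removed from the matching but still exist, and since $M(a_i)=M'(a_{i+1})$ each such arc jumps from the cycle of the new factor containing $a_i$ to the one containing $a_{i+1}$; as every cycle of the new factor inside $W$ must contain one of $x,v_1,\dots,v_r$ (elsewhere the successor map is unchanged), these arcs form a closed chain through all of them. One then applies Theorem~\ref{theo:haggvist-manoussakis} directly to $D[\tilde{B_1}]$ to get a Hamiltonian cycle $B_3$ of length $2(p+1)$ and finishes with Lemma~\ref{lem:cf-to-2cf}, bypassing Lemma~\ref{lem:SommeCF} entirely. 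Without this idea --- reusing the displaced matching arcs as connectors in $D$ rather than in $D^{M'}$ --- your argument does not close.

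A secondary issue: your exclusion of $F_{2(p+1)}$ via a four-vertex configuration around $v_1\to x$ would need the absence of the reverse arc $x\to v_1$ in $D^{M'}$, which does not follow from the anti-cycle hypothesis (that only forbids $v_r\to x$ in $D^{M}$). The paper's argument is simpler and robust: when $p+1$ is even, $p$ is odd, and $D[B_3]$ contains $C_1$, a cycle of length $2p\equiv 2\pmod 4$, which $F_{2(p+1)}$ cannot contain.
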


\begin{proof}
To shorten notations, we denote $C_1^{M}$ by $C$ and
$C_2^{M}$ by $C'$.
Let $H=a_1\dots a_t$ be an anti-cycle of $D^{M}$ such
 that $a_1\dots a_s$ is an anti-path of $D^{M}[C']$, $a_1$ is the
 successor of $a_s$ along $C'$ and $a_{s+1}\dots a_t$ is an anti-path
 of $D^{M}[C]$. Moreover, for every $i$ with $1 \leq i \leq t$ we denote by $b_i$ the out-neighbor of $a_i$ along $C$ or $C'$.  An illustrative case
 is depicted in Figure~\ref{fig:switch-strong}.
 
 \begin{figure}[!ht]
  \centering \includegraphics[scale=0.9]{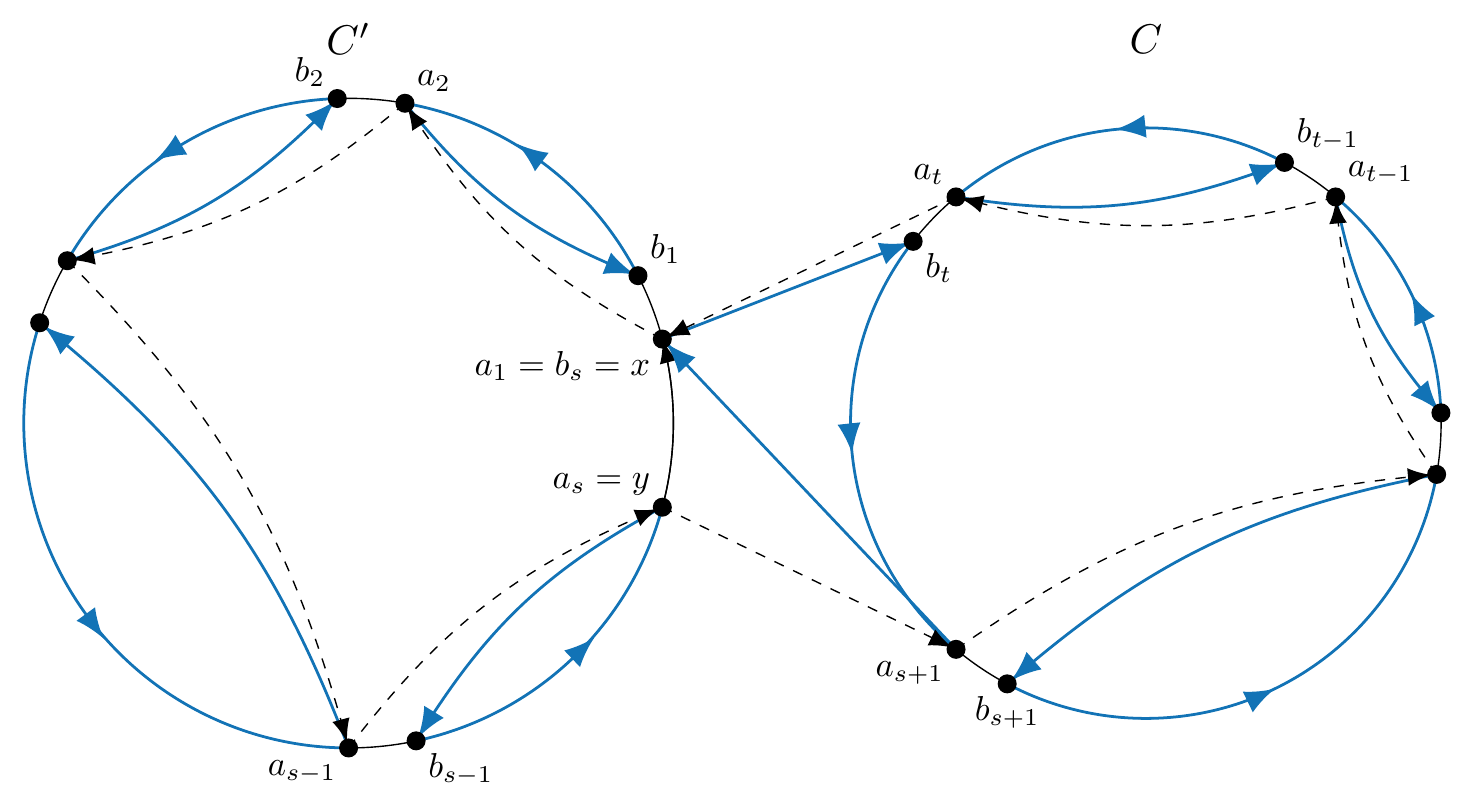}
  \caption{An illustrative case of the proof of
   Claim~\ref{claim:good-anti-cycle}. The dashed arcs form the anti-cycle
   $H$ and the blue arcs form the cycle-factor $B$ of $D^{{M}'}$.}
  \label{fig:switch-strong}
 \end{figure}
 
 We perform a switch exchange on $H$ to obtain the digraph $D^{{M}'}$. By
 Corollary~\ref{cor:switch-cycle-factor}, the digraph $D^{{M}'}$ contains
 a cycle-factor. More precisely, we pay attention to $B$ the
 cycle-factor derived from $C\cup C'$. By Lemma~\ref{lem:switch}, the
 out-neighbor in $B$ of every vertex not in $\{a_1,\dots ,a_t\}$ is its
 out-neighbor in $C\cup C'$ and the out-neighbor in $B$ of every
 vertex $a_i$ in $\{a_1,\dots ,a_t\}$ is $b_{i-1}$ (where indices are
 given modulo $t$). As there is only one arc of $H$ from $C$ to $C'$
 and one arc of $H$ from $C'$ to $C$, the only arc of $B$ from $V(C)$
 to $V(C')$ of $B$ is $a_{s+1}b_s$ and the only arc of $B$ from $V(C')$
 to $V(C)$ is $a_1b_t$. So $B$ contains a subset $B_1$ of cycles
 covering $V(C)\cup \{a_1\}$ and a subset $B_2$ of cycles covering
 $V(C')\setminus \{a_1\}$. Thus the cycles of $B_1$ (resp. $B_2$) form a
 cycle-factor of $D^{{M}'}[C\cup \{a_1\}]$ (resp. $D^{{M}'}[C'\setminus \{a_1\}]$)
 and we denote by $\tilde{B_1}$ (resp. $\tilde{B_2}$) the corresponding
 cycle-factors of $D$. Moreover, for $i=s+1,\dots t$ the arcs
 $a_i{M}(a_i)$ belongs to $D$ and as ${M}(a_i)={M}'(a_{i+1})$ they link the
 cycles of $\tilde{B_2}$ in a strongly connected way. Thus
 $D[\tilde{B_2}]$ is strongly connected and so by
 Theorem~\ref{theo:haggvist-manoussakis} it has an hamiltonian cycle
 $B_3$ on $2(p+1)$ vertices. In addition if $p+1$ is even $D[B_3]$ is not
 isomorphic to $F_{2(p+1)}$ as it contains $C_1$ as a subdigraph, $C_1$
 being a cycle on $2p$ vertices with $p$ odd.  Therefore
 $\tilde{B_1}\cup B_3$ forms a cycle-factor of $D$ with a cycle, $D[B_3]$,
 of length $2(p+1)$ not isomorphic to $F_{2(p+1)}$ and we can conclude
 with Lemma~\ref{lem:cf-to-2cf}.
\end{proof}

The next claim is an easy case where we can insert a vertex of
$C_2^{M}$ into $C_1^{M}$.

\begin{claimt}
 \label{claim:extendingC}
 If $C_2^{M}$ contains three consecutive vertices $a$, $b$ and $c$ (in
 this order along $C_2^M$) and $C_1^{M}$ contains two consecutive
 vertices $x$ and $y$ (in this order along $C_1^M$) such that $ac$,
 $xb$ and $by$ are arcs of $D^{M}$ then $D$ admits a good
 cycle-factor.
\end{claimt}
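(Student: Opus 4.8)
The plan is to directly insert the vertex $b$ into the cycle $C_1^M$ and re-route $C_2^M$ to skip over $b$, producing a new 2-cycle-factor with first cycle of length $p+1$. Concretely, since $x$ and $y$ are consecutive along $C_1^M$ (with $y = C_1^M(x)$), and $xb$, $by$ are arcs of $D^M$, I can replace the arc $xy$ by the detour $x, b, y$; this yields a cycle $\gamma$ on the vertex set $V(C_1^M)\cup\{b\}$, which has length $p+1$. Symmetrically, since $a$, $b$, $c$ are consecutive along $C_2^M$ (with $b = C_2^M(a)$ and $c = C_2^M(b)$) and $ac$ is an arc of $D^M$, I can delete $b$ from $C_2^M$ and use the arc $ac$ to close up the gap, obtaining a cycle $\gamma'$ on $V(C_2^M)\setminus\{b\}$ of length $2k-p-1$. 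Then $(\gamma, \gamma')$ is a 2-cycle-factor of $D^M$ with $|\gamma| = p+1$, which lifts to a $(2(p+1), 4k-2(p+1))$-cycle-factor of $D$.

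The remaining work is to verify the ``good'' part of the statement, namely that the first cycle is not isomorphic to $F_{2(p+1)}$ when $p+1$ is even (equivalently, $\gamma$ is not a balanced complete bipartite digraph when $p+1$ is even, so that I may invoke Lemma~\ref{lem:SommeCF} or argue directly). Here I expect the cleanest route is to observe the asymmetry introduced by the insertion: in $D^M$ we have the arc $xb$ and the arc $by$, but I should examine whether $yx$ or $bx$ is a non-arc. Since $\gamma$ contains the consecutive triple $x, b, y$ together with $x = \gamma(\text{pred}(x))$, I can exhibit four vertices of $\gamma$ with three cycle-arcs plus a chord (as done at the end of the proof of Lemma~\ref{lem:cf-to-2cf}) that $F_{2(p+1)}$ cannot contain, because every cycle of $F_{4k}$ has length $0 \pmod 4$ and $F_{2(p+1)}$ has no short transversal chord of the forbidden type.

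The main obstacle I anticipate is precisely this non-isomorphism check: inserting a single vertex is combinatorially trivial, but guaranteeing that the resulting even cycle avoids $F_{2(p+1)}$ requires pinning down a concrete forbidden configuration and confirming the relevant arc/anti-arc pattern is forced by the hypotheses $xb, by, ac \in A(D^M)$. A convenient alternative is to phrase the result through Lemma~\ref{lem:SommeCF}: I would set $B_1$ to be the cycle $\gamma$ together with the cycle decomposition of the rest, check that $D^M[\gamma]$ is strongly connected (it is, being a single cycle) and not isomorphic to a balanced complete bipartite digraph, and then the good cycle-factor follows immediately. I would handle the degenerate sub-case where $\gamma$ \emph{could} be complete bipartite by noting that the chord $xy \in A(D^M)$ (present since $x, y$ were adjacent on $C_1^M$) violates the bipartite structure in the same manner, so the configuration simply cannot be $F_{2(p+1)}$.
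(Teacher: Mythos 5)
Your proposal is correct and follows essentially the same route as the paper: insert $b$ between $x$ and $y$ via the arcs $xb,by$, close $C_2^M$ with $ac$, and rule out $F_{2(p+1)}$ by a bipartiteness obstruction. The only (immaterial) difference is in that last step: you use the chord $xy$ together with the path $x,b,y$ to contradict the two-sided structure, while the paper observes that the new cycle still contains the odd cycle $C_1^M$ (of length $p$, odd since $p+1$ is even), which a balanced complete bipartite digraph cannot contain.
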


\begin{proof}
 It is clear that using the arcs $ac$, $xb$ and $by$ we can form a
 2-cycle-factor of $D^{M}$, with one cycle of length $2k-(p+1)$
 covering $C_2^M\setminus \{b\}$ and the other of length $p+1$
 covering $C_1^{M}\cup \{b\}$. Let us denote by $\tilde{C}$ this
 latter one. If $p+1$ is even notice that the cycle of $D$
 corresponding to $\tilde{C}$ cannot be isomorphic to
 $F_{2(p+1)}$. Indeed otherwise $\tilde{C}$ would be isomorphic to a
 complete bipartite digraph but $\tilde{C}$ contains $C$ has a
 subdigraph which is a cycle on $p$ vertices with $p$ odd, a
 contradiction.
\end{proof}

Now we can prove the following claim, that we will intensively use in
the remaining of the proof of Theorem~\ref{theo:2cfbis}.

\begin{claim}
 \label{claim:external-arc}
 Assume that $\overline{D^{M}[C_2^{M}]}$ is not strongly connected and
 denote by $S_1,\dots ,S_l$ its strongly connected components.  If
 there exists an arc $ab$ of $C_2^{M}$ such that there is an anti-path
 in $D^M[C_2^{M}]$ from $b$ to $a$ and $a\in S_i$, $b\in S_j$ for
 $i\neq j$, then $D$ admits a good cycle-factor.
\end{claim}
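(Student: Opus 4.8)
The plan is to reduce to Claim~\ref{claim:good-anti-cycle} by exhibiting an anti-cycle \(H\) of \(D^{M}\) whose part outside \(C_1^{M}\) is exactly the given anti-path from \(b\) to \(a\). Since \(ab\) is an arc of \(C_2^{M}\) we have \(b = C_2^{M}(a)\), so this anti-path plays the role of the anti-path ``from \(x\) to \(y\) with \(x = C_2^{M}(y)\)'' (with \(x=b\), \(y=a\)) required by Claim~\ref{claim:good-anti-cycle}. It then remains to close \(H\) through \(C_1^{M}\): concretely, to find an anti-out-neighbor \(u\in V(C_1^{M})\) of \(a\), an anti-in-neighbor \(v\in V(C_1^{M})\) of \(b\), and an anti-path from \(u\) to \(v\) inside \(D^{M}[C_1^{M}]\) (possibly with \(u=v\)). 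Write \(X = \overline{N^+_{D^{M}}}(a)\cap V(C_1^{M})\) and \(Y = \overline{N^-_{D^{M}}}(b)\cap V(C_1^{M})\).

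The key step is the bound \(|X|+|Y|\ge p\). Order the components so that the condensation of \(\overline{D^{M}[C_2^{M}]}\) has all arcs from lower to higher indices. The anti-path from \(b\) to \(a\) forces \(S_j\) to precede \(S_i\), hence \(j<i\) since \(i\neq j\). All anti-arcs out of \(a\) inside \(C_2^{M}\) land in \(S_i\cup\dots\cup S_l\), so \(a\) dominates in \(D^{M}\) all of \(S_1\cup\dots\cup S_{i-1}\), giving \(|N^+_{D^{M}}(a)\cap V(C_2^{M})|\ge n_{<i}\), where \(n_{<i}=|S_1\cup\dots\cup S_{i-1}|\); symmetrically \(|N^-_{D^{M}}(b)\cap V(C_2^{M})|\ge n_{>j}\) with \(n_{>j}=|S_{j+1}\cup\dots\cup S_l|\). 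As \(j<i\) the index sets \(\{1,\dots,i-1\}\) and \(\{j+1,\dots,l\}\) cover \(\{1,\dots,l\}\), so \(n_{<i}+n_{>j}\ge |V(C_2^{M})| = 2k-p\). Since every vertex of \(D^{M}\) has out-degree and in-degree \(k\), we get \(|X| \ge p-(k-n_{<i})\) and \(|Y|\ge p-(k-n_{>j})\), whence \(|X|+|Y|\ge 2p-2k+(n_{<i}+n_{>j})\ge p\).

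If \(X\cap Y\neq\emptyset\), pick \(u\) in the intersection: then \(au,ub\notin A(D^{M})\), so \(H\) consisting of the anti-path from \(b\) to \(a\) together with the single \(C_1^{M}\)-vertex \(u\) (closed by the anti-arcs \(a\to u\) and \(u\to b\)) is the desired anti-cycle, and Claim~\ref{claim:good-anti-cycle} yields a good cycle-factor. Because \(X,Y\subseteq V(C_1^{M})\) and \(|V(C_1^{M})|=p\), the bound \(|X|+|Y|\ge p\) forces \(X\cap Y\neq\emptyset\) whenever the inequality is strict, and also whenever \(X\cup Y\subsetneq V(C_1^{M})\).

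The main obstacle is therefore the tight degenerate case where \(|X|+|Y|=p\) and \(X,Y\) partition \(V(C_1^{M})\) (in particular when one of them equals \(V(C_1^{M})\)). Here equality propagates through the counting: \(j=i-1\), and \(a\) anti-dominates \(S_i\cup\dots\cup S_l\) while dominating \(S_1\cup\dots\cup S_{i-1}\) inside \(C_2^{M}\), with the symmetric statement for \(b\). To close \(H\) it then suffices to find one anti-arc from \(X\) to \(Y\) inside \(D^{M}[C_1^{M}]\); if none existed, \(X\) would dominate \(Y\) there and no anti-path could leave \(X\). I expect to exclude this by a global arc count, using \(e(V(C_1^{M}),V(C_2^{M}))=e(V(C_2^{M}),V(C_1^{M}))\) (a consequence of \(k\)-regularity) together with the rigid domination pattern imposed by the equalities, deriving a contradiction or, failing that, building the good cycle-factor directly via an insertion as in Claim~\ref{claim:extendingC}. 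Verifying that this rigid configuration either cannot occur or admits an ad hoc construction is the delicate part of the argument.
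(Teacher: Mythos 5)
Your reduction to Claim~\ref{claim:good-anti-cycle}, the bound $|X|+|Y|\ge p$, and the case $X\cap Y\neq\emptyset$ are all correct and coincide with the opening step of the paper's proof (the paper obtains the same bound from $\overline{d^+}(a)+\overline{d^-}(b)=2k-2$ together with the anti-reachability partition $(A,B)$ of $V(C_2^{M})$ rather than from a topological order of the condensation, but the content is identical). The genuine gap is the degenerate case, which you explicitly leave open: it is not a corner case to be swept away but the bulk of the argument, and it cannot be dispatched by ``a global arc count''. When $X$ dominates $Y$ in $D^{M}[C_1^{M}]$, no anti-cycle through the pair $(a,b)$ exists at all, and the configuration is not self-contradictory: the forced equalities only give $N^+(a)=Y\cup B$ and $N^-(b)=X\cup A$ with $|Y|+|B|=|X|+|A|=k$, which is perfectly compatible with $k$-regularity. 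So neither branch of your fallback (``derive a contradiction'' or ``insert as in Claim~\ref{claim:extendingC}'') is substantiated, and the second one cannot work as stated since Claim~\ref{claim:extendingC} needs arcs, not anti-arcs, in specific positions that you have not produced.

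What the paper actually does in this case is abandon the arc $ab$ and hunt for the anti-cycle elsewhere. It first pins down the rigid structure ($a$ anti-dominates $A$, $B$ anti-dominates $b$, so every vertex of $B$ can play the role of $b$ and every vertex of $A$ the role of $a$), then branches on whether $b'=C_2^{M}(b)$ lies in $B$ or in $A$. In the first branch it uses Claim~\ref{claim:extendingC} to deduce $C_1^{M}(\overline{N^+_{C_1^{M}}}(a))\subseteq \overline{N^+_{C_1^{M}}}(b)$ and a second degree count giving $\overline{d^+_{C_1^{M}}}(b)+\overline{d^-_{C_1^{M}}}(b')\ge p+1$, producing an anti-cycle through the arc $bb'$ instead of $ab$. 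In the second branch it shows $C_2^{M}$ alternates between $A$ and $B$ (so $p$ is even), that $A$ and $B$ are independent, introduces the partitions $(A_i,B_i)$ of $V(C_1^{M})$ attached to each arc $a_ib_i$, reroutes $C_2^{M}$ into a new cycle $C''$ to force all the $A_i$ to coincide, and finally reaches a contradiction via $\overline{d^-}(a_t)\ge k$. This willingness to change the distinguished arc of $C_2^{M}$, and even the cycle $C_2^{M}$ itself, is precisely the idea your plan is missing, since your construction is wedded to closing an anti-cycle through $a$ and $b$.
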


\begin{proof}
In $D^{M}$, we denote $C_2^{M}$ by $C'$ and $C_1^{M}$ by $C$. All the
proof stands in $D^{M}$.  First assume that there exists a vertex
$c\in C$ such that $ac$ and $cb$ are anti-arcs, then the anti-cycle
formed by the anti-path from $b$ to $a$ in $C'$ completed with the
anti-arcs $ac$ and $cb$ satisfies the hypothesis of
Claim~\ref{claim:good-anti-cycle} and we can conclude.\\ Hence, we
assume that $\overline{N^+_C}(a)\cap
\overline{N^-_C}(b)=\emptyset$. In particular, we have $
\overline{d^+_{C}}(a) + \overline{d^-_{C}}(b) \le p$. Let us denote by
$A$ the set of all the vertices of $C'$ for whom there is a anti-path
from $a$ to them, and by $B$ the set $C'\setminus A$.  The set $A$
dominates the set $B$, we have $|A|+|B|=2k-p$ and also $S_i\subseteq
A$ and $S_j\subseteq B$ (otherwise $a$ and $b$ would have been in the
same connected component of $\overline{D^{M}[C']}$). Therefore, we
have
 $$2k-2=\overline{d^+}(a) + \overline{d^-}(b)\le (
\overline{d^+_{C}}(a) + \overline{d^-_{C}}(b)) +
\overline{d^+_{C'}}(a) + \overline{d^-_{C'}}(b)\le p+
|A|-1+|B|-1=2k-2$$ and thus we have equalities everywhere. In
particular we obtain that $(A,B)$ is a partition of $V(C')$ with
$A\setminus \{a\}\subseteq \overline{N^+}(a)$ and $B\setminus
\{b\}\subseteq \overline{N^-}(b)$. As a consequence for every $x_A\in
A$ and $x_B\in B$ there exists an anti-path from $x_B$ to $x_A$.
Another consequence is that $\overline{d^+_{C}}(a) +
\overline{d^-_{C}}(b)=p$ and that $C$ admits a partition into
$\overline{N^+_C}(a)$ and $\overline{N^-_C}(b)$.\\ So let $b'$ be the
successor of $b$ along $C'$, that is $b'=C'(b)$, and assume first that
$b'\in B$. Hence, for every $x\in \overline{N^+_C}(a)$ the arc
$xb$ exists in $D^M$ (as we assume that $\overline{N^+_C}(a)\cap
\overline{N^-_C}(b)=\emptyset$) and so $bC(x)$ is an anti-arc,
otherwise we can insert $b$ into $C$ and shortcut the path $abb'$
using Claim~\ref{claim:extendingC}. Thus we have
$C(\overline{N^+_C}(a)) \subseteq \overline{N^+_C}(b)$ and In
particular, we obtain $\overline{d^+_C}(a)\le \overline{d^+_C}(b)$.
Hence, we have
    \begin{equation*}
   \begin{split}
    2k-2 = \overline{d^+}(a)+\overline{d^-}(b')
    & =\overline{d^+_C}(a)+|A|-1+
  \overline{d^-_C}(b')+ \overline{d^-_B}(b')\\
    & \le \overline{d^+_C}(b)+
  \overline{d^-_C}(b')+|A|-1+|B|-2
   \end{split}
  \end{equation*}
  and then $\overline{d^+_C}(b)+
  \overline{d^-_C}(b')\ge p+1$.
  In particular, there exists $c\in C$ such
 that $bc$ and $cb'$ are anti-arcs and we can apply
 Claim~\ref{claim:good-anti-cycle} to the anti-cycle $bb'c$ to obtain a
 good cycle-factor of $D$.\\ Now, we assume that we have $b'\in A$. And
 more generally we can assume that $C'$ has no two consecutive vertices
 lying in $B$. Indeed, otherwise considering a non empty path of
 $C'[B]$, $vw$ its first arc and $u$ the predecessor of $v$ along $C'$
 we can proceed as before with $a=u$, $b=v$ and $b'=w$. Symmetrically,
 $C'$ has no two consecutive vertices in $A$ and $C'$ alternates
 between $A$ and $B$. In particular, $p$ is even and we write
 $C'=a_1b_1a_2b_2\dots a_{k-p/2}b_{k-p/2}$ with $A=\{a_1,\dots
  ,a_{k-p/2}\}$ and $B=\{b_1,\dots ,b_{k-p/2}\}$ (indices in $C'$ will
 be given modulo $k-p/2$). By the above arguments we also have
 $A\setminus \{a_i\} \subseteq \overline{N^+}(a_i)$ and $B\setminus
  \{b_i\} \subseteq \overline{N^-}(b_i)$ for every $1\le i\le k-p/2$ implying that $A$ and $B$ are
 two independent sets of $D^{M}$.\par
 Notice that, $p$ being even, we have $p\ge 4$ and $k-p/2>p/2\ge
 2$. So we obtain that $k-p/2\ge 3$.  Now, for every $i$ with $1\leq i
 \leq k - p/2$, we denote by $B_i$ the set $\overline{N^-_C}(b_i)$ and
 by $A_i$ the set $\overline{N^+_C}(a_i)$.  As $a_ib_i$ is an arc of
 $C'$, as there exists an anti-path from $b_i$ to $a_i$ in $D[C']$ and
 as $a_i$ and $b_i$ do not belong to the same strongly connected
 component of $\overline{D^{M}[C']}$ (because $A$ dominates $B$), we
 can argue as before with $a_i$ playing the role of $a$ and $b_i$ the
 one of $b$. In particular, we obtain that $(A_i,B_i)$ is a partition
 of $V(C)$ for every $i\in \{1,\dots k-p/2\}$ with $|A_i|=|B_i|=p/2$.
 Moreover, assume that there exists $x\in C$ and $i\in \{1,\dots
 k-p/2\}$ such that $a_{i}x$ and $xb_{i+1}$ are anti-arcs. In this
 case, we modify $C'$ into the cycle $C''$ by replacing the subpath
 $a_{i-1}b_{i-1}a_ib_ia_{i+1}b_{i+1}$ of $C'$ by
 $a_{i-1}b_ia_{i+1}b_{i-1}a_ib_{i+1}$. Now, $a_ib_{i+1}$ is an arc of
 $C''$ and there exists an anti-path $P$ in $D^{M}[C'']$ from
 $b_{i+1}$ to $a_i$. So we can conclude by applying
 Claim~\ref{claim:good-anti-cycle} to the anti-cycle $Px$. So it means
 that $A_i\cap B_{i+1}=\emptyset$ for every $i\in \{1,\dots
 k-p/2\}$. We deduce then that all the $A_i$ coincide as well as all
 the $B_i$ and in particular we have $A$ anti-dominates $A_i$ and
 $A_i$ dominates $B$ for every $i\in \{1,\dots k-p/2\}$.\par\smallskip
  To conclude, consider $s,t \in [1, k-p/2]$ such that $b_sa_t$ is an
  anti-arc (such an anti-arc exists since there exists an anti-path
  from $b$ to $A$). If there exists an anti-arc from $x_a\in A_t$ to
  $x_b\in B_t$, then we conclude with Claim~\ref{claim:good-anti-cycle}
  applied to the anti-cycle $x_ax_bb_tb_sa_t$. Otherwise it means that
  $A_t$ dominates $B_t$ and so as $A_t$ dominates $ B$, we have $A_t$
  anti-dominates $A$. But then we obtain $\{b_s\}\cup A_t\cup
  A\setminus \{a_t\} \subseteq \overline{N^-}(a_t)$ and
  $\overline{d^-}(a_t)\ge k$, a contradiction.
\end{proof}

\subsection{Properties of the 2-cycle-factor $(C_1,C_2)$.}

Now, we define four different properties of the 2-cycle-factor
$(C_1,C_2)$ of $D$. For this, we first need the next claim.

\begin{claim}
 \label{claim:one-cfc}
  Let $C$ be $C_1$ or $C_2$.  We have: 
  \begin{itemize}
  	\item either $\overline{D^{M_u}[C^{M_u}]}$ or
  	$\overline{D^{M_d}[C^{M_d}]}$ has exactly one terminal strong
  	component
  	\item or $|C|$ is congruent to 0 modulo 4 and $D[C]$ is
  	isomorphic to $F_{|C|}$.
  \end{itemize}  

 Similarly, we have: 
 \begin{itemize}
 	\item either
 	$\overline{D^{M_u}[C^{M_u}]}$ or $\overline{D^{M_d}[C^{M_d}]}$ has
 	exactly one initial strong component
 	\item  	or $|C|$ is congruent to 0
 	modulo 4 and $D[C]$ is isomorphic to $F_{|C|}$
 \end{itemize} 
\end{claim}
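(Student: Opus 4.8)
The plan is to reduce everything to the bipartite tournament \(B:=D[C]\) together with its Hamiltonian cycle \(C\). First, the two halves of the statement are interchanged by reversing every arc of \(D\): this swaps initial and terminal strong components, interchanges the roles of \(M_u\) and \(M_d\), and sends \(F_{|C|}\) to an isomorphic copy of \(F_{|C|}\); so I would only prove the assertion about terminal strong components. Write \(|C|=2m\) and \(C=s_1,t_1,\dots ,s_m,t_m\) with \(s_i\in S\), \(t_i\in T\), so that \(M_u=\{s_it_i:1\le i\le m\}\) and \(M_d=\{t_is_{i+1}:1\le i\le m\}\) (indices modulo \(m\)), and \(C^{M_u}\) (resp.\ \(C^{M_d}\)) is the \(m\)-cycle on \(s_1,\dots ,s_m\) (resp.\ on \(t_1,\dots ,t_m\)). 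The first observation is that all arcs of \(\overline{D^{M_u}[C^{M_u}]}\) and \(\overline{D^{M_d}[C^{M_d}]}\) are determined by \(B\) alone, so the whole claim is internal to \(B\) and does not in fact use the regularity of \(D\).

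Next I would reformulate ``exactly one terminal strong component''. Every digraph has at least one terminal strong component, and it has exactly one if and only if it has no two disjoint non-empty sets with no outgoing arc (out-closed sets): indeed any such out-closed set contains a terminal strong component, and two distinct terminal strong components are disjoint out-closed sets. Translating into \(D\): a set \(X\subseteq V(C^{M_u})\) is out-closed in \(\overline{D^{M_u}[C^{M_u}]}\) exactly when \(M_u(X)\) dominates \(V(C^{M_u})\setminus X\); call such an \(X\) \emph{\(u\)-good}, and define \emph{\(d\)-good} sets \(Z\subseteq V(C^{M_d})\) symmetrically through \(M_d(Z)\) dominating \(V(C^{M_d})\setminus Z\). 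Thus it suffices to prove: either there are no two disjoint non-empty \(u\)-good sets, or there are no two disjoint non-empty \(d\)-good sets, unless \(B\cong F_{2m}\).

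I would then argue the contrapositive. Assume there are disjoint non-empty \(u\)-good sets \(X_1,X_2\) and disjoint non-empty \(d\)-good sets \(Z_1,Z_2\). Each \(u\)-good \(X_a\) produces a block \(M_u(X_a)\subseteq T\cap C\) dominating the complementary block \((S\cap C)\setminus X_a\), which yields two dominations of type \(T\to S\); each \(d\)-good \(Z_b\) produces \(M_d(Z_b)\subseteq S\cap C\) dominating \((T\cap C)\setminus Z_b\), yielding two dominations of type \(S\to T\). The goal is to show these fit together into partitions \(S\cap C=K\sqcup M\) and \(T\cap C=L\sqcup N\) with \(K\to L\), \(L\to M\), \(M\to N\) and \(N\to K\) all complete; these are exactly the four types of pairs between \(S\cap C\) and \(T\cap C\), so that \(B\) is forced to be the four-block digraph underlying \(F_{2m}\). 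The hard part, and the main obstacle, is precisely this alignment: I must show that the \(u\)-good sets cover \(S\cap C\) in exactly two blocks and the \(d\)-good sets cover \(T\cap C\) in exactly two, and that the \(T\)-blocks arising from the \(u\)-good sets coincide with the blocks \(Z_b\) (and dually). I expect to obtain this from the maximality of the out-closed sets together with the constraint imposed by \(C\): a vertex not fitting the emerging pattern, or a third block, would create an anti-arc violating out-closedness on one side, while the Hamiltonian cycle \(C\) dictates which block may follow which.

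Finally, once \(B\) equals the four-block digraph \(K\to L\to M\to N\to K\), all of its arcs are determined and every cycle of \(B\) runs through \(K,L,M,N\) cyclically; as \(C\) is Hamiltonian this forces \(|K|=|L|=|M|=|N|\), hence \(B\cong F_{2m}\) with \(m\) even, that is \(|C|\equiv 0\pmod{4}\). The statement about initial strong components then follows from the arc-reversal symmetry noted at the outset.
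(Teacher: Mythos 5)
Your setup is sound up to the point where the real work begins, and then it stops. The reformulation of ``exactly one terminal strong component'' via out-closed sets, the translation of out-closedness of $X$ in $\overline{D^{M_u}[C^{M_u}]}$ into ``$M_u(X)$ dominates $V(C^{M_u})\setminus X$'', and the observation that the whole statement is internal to $D[C]$ are all correct. But the entire content of the claim is the step you yourself label ``the hard part'': showing that the domination patterns coming from the two sides align into the four-block structure $K\to L\to M\to N\to K$ with no third block and with the $T$-blocks produced by the $u$-good sets coinciding with the $d$-good sets. You do not prove this; you state what must be shown and write that you ``expect to obtain this from the maximality of the out-closed sets together with the constraint imposed by $C$''. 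That is a restatement of the goal, not an argument, and it is exactly where the difficulty of the claim sits. As written, the proposal establishes nothing beyond the easy equivalences of the setup.

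For comparison, the paper's proof does not assume failure on both sides simultaneously, which lets it avoid the alignment problem entirely. It takes two terminal strong components $A,B$ of $\overline{D^{M_u}[C^{M_u}]}$, translates this into $D[C]$ as ``$C(A)$ dominates $S'\setminus A$ and $C(B)$ dominates $S'\setminus B$'' (where $C(\cdot)$ denotes successors along $C$), and then splits on whether some $u\in C(A)$ has $C(u)\notin B$. If so, it picks any $v\in C(B)$ and checks that $uv$ is an anti-arc of $D^{M_d}$ and that every vertex of $T'$ has an anti-arc to $u$ or to $v$; this already forces a unique terminal strong component on the $M_d$ side, with no need to identify any blocks. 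Only in the rigid remaining case, where $C$ maps $C(A)$ into $B$ and $C(B)$ into $A$, do the four blocks appear, and there $|A|=|B|=|C(A)|=|C(B)|$ follows from a one-line counting argument, after which a final dichotomy (is there an anti-arc between the two cliques $C(A)$ and $C(B)$ in $\overline{D^{M_d}}$ or not) yields either a unique terminal component or $D[C]\cong F_{|C|}$. If you want to keep your contrapositive plan you must supply the alignment argument in full; the case split above is the way the paper circumvents it.
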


\begin{proof}
We prove the statement for terminal strong components. The proof for
initial ones is similar.\\ We denote by $D'$ the digraph $D[C]$ with
bipartition $(S',T')$ where $S'=S\cap V(C)$ and $T'=T\cap V(C)$.  So,
assume that $\overline{D'^{M_u}}$ has at least two terminal strong
components and denote by $A$ and $B$ two such components. In
$D'^{M_u}$, it means that $A$ dominates $D'^{M_u}\setminus A$ and that
$B$ dominates $D'^{M_u}\setminus B$. In $D'$, by considering that $A$
and $B$ are subsets of $S'$, we then have $C(A)$, the successors of
the vertices of $A$ along $C$, dominates $S'\setminus A$ and $C(B)$
dominates $S'\setminus B$.  Notice that $A$ and $B$ are two disjoint
sets of $S'$ and that $C(A)$ and $C(B)$ are two disjoint sets of $T'$
with $|C(A)|=|A|$ and $|C(B)|=|B|$.\\ Now, assume that there exists a
vertex $u$ of $C(A)$ such that $u'=C(u)$ does not belong to $B$. Then
let $v$ be a vertex of $C(B)$. As $u'\notin B$, the arc $vu'$ belongs
to $D'$ and then there exists an anti-arc from $u$ to $v$ in
$D'^{M_d}$.  Moreover, for every vertex $w$ of $T'$, if $C(w)$ does
not belongs to $A$ then there is an anti-arc in $D'^{M_d}$ from $w$ to
$u$ and if it belongs to $A$, then there is an anti-arc in $D'^{M_d}$
from $w$ to $v$. Thus, $\overline{D'^{M_d}}$ has exactly one initial
strong component, containing $v$.\\ Otherwise, in $D'$ every vertex
$u$ of $C(A)$ satisfies $C(u)\in B$ and similarly every vertex $v$ of
$C(B)$ satisfies $C(v)\in A$. In particular, we have $|C(A)|\le
|B|=|C(B)| \le |A|=|C(A)|$ and so $|A|=|B|=|C(A)|=|C(B)|$. Moreover,
as $C$ contains all the vertices of $D'$, $(A,B)$ is a partition of
$S'$ and $(C(A),C(B))$ is a partition of $T'$ with $C(A)$ dominates
$B$ and $C(B)$ dominates $A$. In particular, $|C|$ is congruent to 0
modulo 4. As all the arcs of $C$ from $T'$ to $S'$ go from $C(A)$ to
$B$ or from $C(B)$ to $A$ and that $C(A)$ dominates $B$ and $C(B)$
dominates $A$ in $D'$, the sets $C(A)$ and $C(B)$ respectively induce
an independent set in $D'^{M_d}$, that is a complete digraph in
$\overline{D'^{M_d}}$. As they form a partition of the vertex set of
$\overline{D'^{M_d}}$, it has exactly one initial strong component
except if there is no arc between $C(A)$ and $C(B)$. In this later
case, it means that all the arcs from $A$ to $C(A)$ and from $B$ to
$C(B)$ are contained in $D'$.  But, then $D'[C]$ is isomorphic to
$F_{|C|}$.
\end{proof}

In the case where $\overline{D^{M_u}[C_2^{M_u}]}$ has exactly one
initial strong component, we say that the 2-cycle-factor $(C_1,C_2)$
has property $\boldsymbol{{\cal Q}_{\rm up}}$. And in the case where
$\overline{D^{M_d}[C_2^{M_d}]}$ has exactly one initial strong
component, we say that $(C_1,C_2)$ has property $\boldsymbol{{\cal Q}_{\rm
  down}}$. By Claim~\ref{claim:one-cfc}, we know that either
$(C_1,C_2)$ has property ${\cal Q}_{\rm up}$ or ${\cal Q}_{\rm down}$
or that $|C_2|$ is congruent to 0 modulo 4 and that $D[C_2]$ is
isomorphic to $F_{|C_2|}$.\\

Now, let us define another pair of properties for the 2-cycle-factor
$(C_1,C_2)$.  As $D$ is a bipartite tournament every vertex of $C_2$
has an in-neighbor or an out-neighbor in $C_1$. Moreover, as $D$ is
$k$-regular and $|C_1|=2p<2k$ there exists at least one arc from $C_1$
to $C_2$ and one arc from $C_2$ to $C_1$. So, it is easy to check that
there exists an arc $uv$ of $C_2$ such that $N^-_{C_1}(u)\neq
\emptyset$ and that $N^+_{C_1}(v)\neq \emptyset$.  If we have $u\in S$
and $v\in T$ we say that $(C_1,C_2)$ has property $\boldsymbol{{\cal
    P}_{\rm down}}$. In this case, it means that the arc $uv$ is an
arc of $M_u$, and in $D^{M_u}$ the vertex $u$ has an in-neighbor and
an out-neighbor in $C_1^{M_u}$. Otherwise, that is when we have $v\in
S$ and $u\in T$, we say that $(C_1,C_2)$ has property
$\boldsymbol{{\cal P}_{\rm up}}$. In this case, let $u'$ be the
predecessor of $u$ along $C_2$, that is $u = C_2(u')$. So, in
$D^{M_u}$, $u'v$ is an arc of $C_2^{M_u}$, the vertex $u'$ has an
anti-out-neighbor in $C_1^{M_u}$ and $v$ has an anti-in-neighbor in
$C_1^{M_u}$.

\begin{claim}
\label{claim:not-Pdown}
If $(C_1,C_2)$ does not satisfies ${\cal P}_{\rm down}$, then in
$D^{M_u}$ every vertex of $C_2^{M_u}$ anti-dominates $C_1^{M_u}$ or is
anti-dominated by $C_1^{M_u}$.
\end{claim}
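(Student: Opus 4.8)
The plan is to show that the claim is essentially a translation of the definitions once the contracted digraph $D^{M_u}$ is decoded carefully. First I would record the bijection between the vertices of $C_2^{M_u}$ and the ``up-arcs'' of $C_2$. A vertex $w$ of $C_2^{M_u}$ is a vertex $u \in S \cap V(C_2)$, and, since $C_2$ alternates between $S$ and $T$, the arc of $C_2$ leaving $u$ goes to $T$; hence $M_u(u) = C_2(u) \in T \cap V(C_2)$. Writing $v = M_u(u)$, every arc $uv$ of $C_2$ with $u \in S$ and $v \in T$ arises in this way from exactly one vertex $w$ of $C_2^{M_u}$.

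Next I would translate the two anti-domination conditions into conditions in $D$. For the out-side, the definition of the contracted digraph gives $N^+_{D^{M_u}}(w) = N^+_D(v)$, and since $v \in T$ these out-neighbours all lie in $S$; intersecting with $C_1^{M_u} = S \cap V(C_1)$ yields precisely $N^+_{C_1}(v)$. Therefore $w$ anti-dominates $C_1^{M_u}$ in $D^{M_u}$ if and only if $N^+_{C_1}(v) = \emptyset$. For the in-side, a vertex $x$ of $C_1^{M_u}$ is an in-neighbour of $w = u$ in $D^{M_u}$ exactly when $M_u(x)\,u \in A(D)$; as $x$ ranges over $S \cap V(C_1)$ the vertex $M_u(x) = C_1(x)$ ranges bijectively over $T \cap V(C_1)$, so such an in-neighbour exists if and only if $u$ has an in-neighbour in $C_1$. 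Thus $w$ is anti-dominated by $C_1^{M_u}$ if and only if $N^-_{C_1}(u) = \emptyset$.

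Finally I would invoke the hypothesis. By the definition of ${\cal P}_{\rm down}$, its failure means that no up-arc $uv$ of $C_2$ satisfies both $N^-_{C_1}(u) \neq \emptyset$ and $N^+_{C_1}(v) \neq \emptyset$; equivalently, every up-arc $uv$ satisfies $N^-_{C_1}(u) = \emptyset$ or $N^+_{C_1}(v) = \emptyset$. Combining this with the two equivalences of the previous paragraph shows that each vertex $w$ of $C_2^{M_u}$ is either anti-dominated by $C_1^{M_u}$ or anti-dominates $C_1^{M_u}$, which is exactly the claim.

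I expect no serious obstacle here, as the argument is a direct decoding rather than a genuine combinatorial difficulty. The only point requiring care is the bookkeeping inside $D^{M_u}$: correctly identifying $M_u(u)$ as the $C_2$-successor of the $S$-vertex $u$ and $M_u(x)$ as the $C_1$-successor of the $S$-vertex $x$, and tracking which neighbourhoods land in $S$ versus $T$, so that the two anti-domination conditions in $D^{M_u}$ are matched to the correct emptiness conditions $N^+_{C_1}(v) = \emptyset$ and $N^-_{C_1}(u) = \emptyset$ in $D$.
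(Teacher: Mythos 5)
Your proof is correct and follows essentially the same route as the paper's: take the up-arc $u\,C_2(u)$ of $C_2$ associated with a vertex of $C_2^{M_u}$, apply the negation of ${\cal P}_{\rm down}$ to it, and translate the two resulting emptiness conditions $N^-_{C_1}(u)=\emptyset$ and $N^+_{C_1}(C_2(u))=\emptyset$ into the absence of arcs from $C_1^{M_u}$ to that vertex, respectively from that vertex to $C_1^{M_u}$. Your write-up is merely a more explicit version of the paper's one-paragraph argument.
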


\begin{proof}
Let $x$ be a vertex of $C_2^{M_u}$. Since $D$ does not satisfies
${\cal P}_{\rm down}$, then in $D$, either $C_2(x)$ is dominated by
$C_1\cap S$ or $x$ dominates $C_1\cap T$. In the first case, it means
that there is no arc from $x$ to $C_1^{M_u}$ in $D^{M_u}$, while in
the latter, it means that there is no arc from $C_1^{M_u}$ to $x$.
\end{proof}

Notice that if $(C_1,C_2)$ satisfies property ${\cal Q}_{\rm up}$,
then exchanging the role of $S$ and $T$ in $D$ (and then of $M_u$ and
$M_d$) leads to $(C_1,C_2)$ satisfies property ${\cal Q}_{\rm down}$,
and conversely. We also have the similar property with ${\cal P}_{\rm
  up}$ and ${\cal P}_{\rm down}$.

Thus, without loss of generality, we assume that $(C_1,C_2)$ has the
property ${\cal P}_{\rm up}$. Then, we study, in this order, the three
different cases: either $(C_1,C_2)$ has property ${\cal Q}_{\rm up}$,
or $D[C_2]$ is isomorphic to $F_{|C_2|}$, or $(C_1,C_2)$ satisfies
property ${\cal Q}_{\rm down}$.

\subsection{Case A: $(C_1,C_2)$ has property ${\cal Q}_{\rm up}$}

So, we know that $\overline{D^{M_u}[C_2^{M_u}]}$ has exactly one
initial strong component. If it is not strong itself, there exists an
arc of $C_2^{M_u}$ entering into its unique initial strong component
and we can directly conclude with
Claim~\ref{claim:external-arc}. Then, we assume that
$\overline{D^{M_u}[C_2^{M_u}]}$ is strongly connected.  We consider
several cases.\medskip

\noindent
{\bf Case 1 : $\overline{D^{M_u}[C_1^{M_u}]}$ is strongly connected
too}. As $(C_1,C_2)$ has the property ${\cal P}_{\rm up}$, there
exist $u$ and $v$ in $D^{M_u}$ such that $v$ is the successor of $u$
along $C_2^{M_u}$, $u$ has an anti-out-neighbor $u'$ in $C_1^{M_u}$
and $v$ has an anti-in-neighbor $v'$ in $C_1^{M_u}$. As both
$\overline{D^{M_u}[C_1^{M_u}]}$ and $\overline{D^{M_u}[C_2^{M_u}]}$
are strongly connected, there exists an anti-path from $v$ to $u$ in
$D^{M_u}[C_2^{M_u}]$ and an anti-path from $u'$ to $v'$ in
$D^{M_u}[C_1^{M_u}]$. So, we can form an anti-cycle in $D^{M_u}$ which
satisfies the hypothesis of Claim~\ref{claim:good-anti-cycle} and
conclude that $D$ contains a good cycle-factor.\medskip

\noindent
{\bf Case 2 : $\overline{D^{M_u}[C_1^{M_u}]}$ is not strongly
  connected.}  In what follows, to shorten the notation, we denote
$C_1^{M_u}$ by $C$ and $C_2^{M_u}$ by $C'$.  So, as
$\overline{D^{M_u}[C]}$ is not strong, there exists a partition
$(A,B)$ of $V(C)$ such that there is no anti-arcs from $A$ to $B$,
thus we have $A$ dominating $B$ in $D^{M_u}$.\smallskip

\noindent
{\bf Case 2.1 :} {\it There exist two vertices $a \in A$ and $b\in B$
 such that there is an anti-path $P$ from $b$ to $a$ in
 $D^{M_u}[C]$.} So, in $D^{M_u}$ we have $$\overline{d^+}(a) +\overline{d^-}(b) =
  2k-2$$ Since $\overline{d^+}(a) \leq |A|-1+\overline{d^+_{C'}}(a)$ and
 $\overline{d^-}(b) \leq |B|-1+\overline{d^-_{C'}}(b)$ and $|A|+|B|=p$,
 we get
 $$ \overline{d^+_{C'}}(a) + \overline{d^-_{C'}}(b) \geq 2k-p$$

 If $ \overline{d^+_{C'}}(a) + \overline{d^-_{C'}}(b) > 2k-p=|C'|$ then
 there exist two vertices $a'$ and $b'$ in $C'$ such that $aa'$ and
 $bb'$ are anti-arcs, and $b'$ is the successor of $a'$ along $C'$. As
 $\overline{D^{M_u}[C']}$ is strongly connected, there exists an
 anti-path $Q$ from $a'$ to $b'$ in $D^{M_u}[C']$. So the concatenation of the paths $P$, $Q$ and the arcs $a'a$ and $bb'$ forms
 an anti-cycle of $D^{M_u}$ satisfying the conditions of
 Claim~\ref{claim:good-anti-cycle} and we can conclude that $D$
 contains a good cycle-factor. See Figure~\ref{fig:casa21} which depicts this subcase. 
 
\begin{figure}
	\centering
	\includegraphics[width=0.8\linewidth]{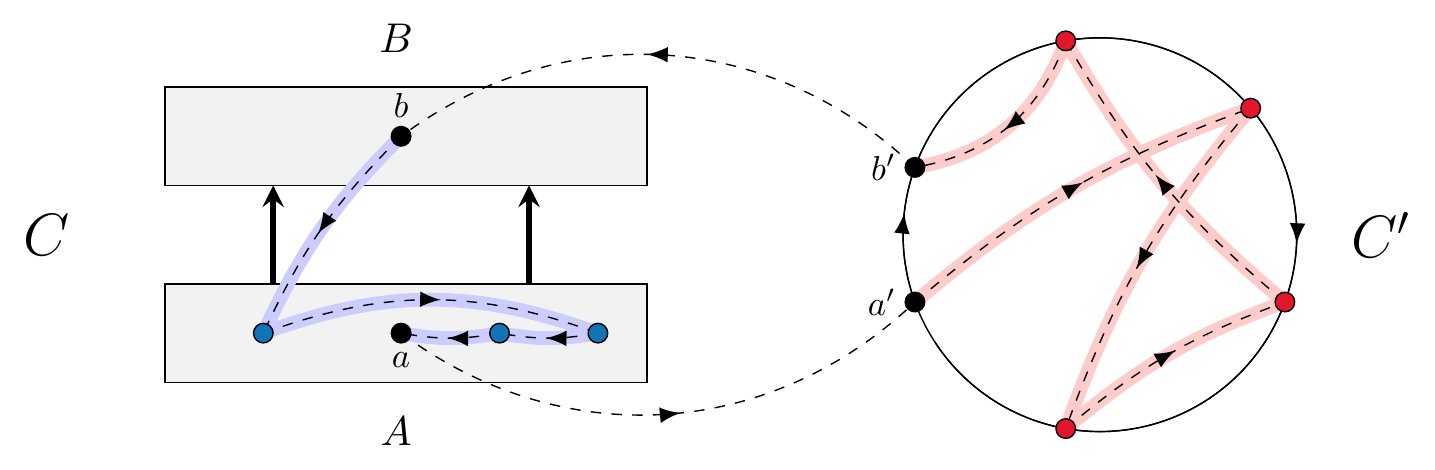}
	\caption{Illustration of the anti-cycle when $\overline{D^{M_u}[C]}$ is not strongly
		connected but there exist two vertices $a \in A$ and $b\in B$
		such that there is an anti-path $P$ from $b$ to $a$ in
		$D^{M_u}[C]$ (depicted in blue) and, there are two vertices $a'$ and $b'$ in $C'$ such that $aa'$ and
		$bb'$ are anti-arcs, and $b'$ is the successor of $a'$ along $C'$. We can use the red anti-path from $a'$ to $b'$ and $P$ to form an anti-cycle.}
	\label{fig:casa21}
\end{figure}

 So, we assume that $ \overline{d^+_{C'}}(a) + \overline{d^-_{C'}}(b)
 = 2k-p$. It implies that $\overline{d^+_{A}}(a) = |A|-1$ and
 $\overline{d^-_{B}}(b)=|B|-1$ and that $a$ anti-dominates $A$ and $B$
 anti-dominates $b$. We deduce that for every $a' \in A$ and $b' \in
 B$ there exists an anti-path from $b'$ to $a'$ and applying the same
 arguments to $a'$ and $b'$ we can assume that $A$ and $B$ are both
 independent sets of $D^{M_u}$. As $C$ has then to alternate between
 $A$ and $B$, we have $|A|=|B|=p/2$, and in particular $p$ is
 even.\\ Now, let $B'$ be the set of vertices in $V(C')$ which have an
 anti-out-neighbor in $B$. More formally, we define \mbox{$B'=\{c'\in
   V(C') : \exists b \in B$ such that $c'b$ is an anti-arc$\}$}. For
 any $b \in B$ we have $\overline{d^-_{C'}}(b)=k-p/2$ implying that
 $|B'| \geq k-p/2$.  Similarly, we define $A'=\{c'\in C' : \exists a
 \in A, ac'$ is an anti-arc$\}$. We have the analogous result $|A'|
 \geq k-p/2$.\\ Assume first that there is an arc $b'z'$ of $C'$ with
 $b'\in B'$ and $z'$ having an anti-in-neighbor $z$ in $C$. Then $b'$
 has an anti-out-neighbor $b$ in $C$ and there exists an anti-path $P$
 from $b$ to $z$ in $D^{M_u}[C]$. Moreover, as $D^{M_u}[C']$ is
 strongly connected, there exists also an anti-path $Q$ from $z'$ to
 $b'$ in $D^{M_u}[C]$ and we can conclude with
 Claim~\ref{claim:good-anti-cycle} applied on the anti-cycle $P\cup
 Q$.\\ Now we can assume that every arc $b'z'$ of $C'$ with $b'\in B'$
 satisfies $\overline{N^-}(z')\cap V(C)=\emptyset$.  Symmetrically, we
 can assume that every arc $z'a'$ of $C'$ with $a'\in A'$ satisfies
 $\overline{N^+}(z')\cap V(C)=\emptyset$. But now, we will obtain the
 contradiction that $(C_1,C_2)$ cannot satisfy property ${\cal P}_{\rm
   up}$. Indeed, in particular, we have $A'\cap C'(B') =\emptyset$ and
 as $|A'|\ge k-p/2$ and $|C'(B')|=|B'|\ge k-p/2$ we obtain that
 $(A',C'(B'))$ is a partition of $V(C')$. As $(C_1,C_2)$ satisfies
 property ${\cal P}_{\rm up}$, the cycle $C'$ should contains two
 vertices $u$ and $v$ such that $v$ is the successor of $u$ along
 $C'$, $u$ has an anti-out-neighbor in $C$ and $v$ has an
 anti-in-neighbor in $C$. By the previous arguments, we must have
 $u\notin B'$ and $v\notin A'$, a contradiction to the 
fact that $(A',C'(B'))$ is a partition of $V(C')$.
 \smallskip

\noindent
{\bf Case 2.2 :} {\it For every $a \in A$ and $b\in B$, there are no
  anti-paths from $b$ to $a$}.  Thus, the set $A$ dominates the set
$B$ and $B$ dominates $A$. Suppose without loss of generality that
$|B| \leq |A|$ and let $b$ be a vertex in $B$. If $
\overline{d^+_{C'}}(b) + \overline{d^-_{C'}}(b) > 2k-p$, then we can
find two vertices $u$ and $v$ in $C'$ such that $ub$ and $bv$ are
anti-arcs, and $v$ is the successor of $u$ along $C'$. In that case we
conclude with Claim~\ref{claim:good-anti-cycle} considering the
anti-cycle $P\cup b$ where $P$ is an anti-path from $v$ to $u$ in
$D^{M_u}[C']$ (which exists as $\overline{D^{M_u}[C']}$ is strongly
connected). Therefore, we can assume that for every $b\in B,
\overline{d^+_{C'}}(b) + \overline{d^-_{C'}}(b) \leq 2k-p$. Thus, we
have
 \begin{equation*}
  \begin{split}
   \overline{d^+_{B}}(b) + \overline{d^-_{B}}(b) & =  (\overline{d^+}(b) + \overline{d^-}(b)) - (\overline{d^+_{C'}}(b) + \overline{d^-_{C'}}(b))\\
   & \geq (2k-2)-(2k-p)\\
   & \geq p-2
  \end{split}
 \end{equation*}
 Finally, since $\overline{d^+_{B}}(b) + \overline{d^-_{B}}(b) \leq
 2|B| - 2 \leq |A|+|B|-2 = p - 2$, we have equalities everywhere in
 the previous computation. In particular, we have $|A|=|B|=p/2$ and
 $p$ is even. Moreover, for every $b\in B$ we have
 $\overline{d^+_{B}}(b) = \overline{d^-_{B}}(b)= p/2 - 1$, implying
 that $B$ is an independent set. Symmetrically for $A$, as $|A|=|B|$
 either we can conclude as previously with
 Claim~\ref{claim:good-anti-cycle} or $A$ is also an independent
 set. In this latter case, $C$ would induce a complete bipartite graph
 in $D^{M_u}$ and $D[C_1]$ would be isomorphic to $F_{2p}$, a
 contradiction to our induction hypothesis.\\

To conclude this section, notice that if
$\overline{D^{M_u}[C_2^{M_u}]}$ has exactly one terminal strong
component, then we can conclude similarly. Indeed, we have seen that
if $\overline{D^{M_u}[C_2^{M_u}]}$ is strongly connected, then $D$
admits a good cycle-factor, and if $\overline{D^{M_u}[C_2^{M_u}]}$ is
not strong, then there exists an arc of $C_2^{M_u}$ leaving its unique
terminal component and we can directly conclude with
Claim~\ref{claim:external-arc}.

\subsection{Case B: $D[C_2]$ is isomorphic to $F_{2k-p}$.}

As $D[C_1]$ is not isomorphic to $F_p$, by Claim~\ref{claim:one-cfc}
we can assume without loss of generality that
$\overline{D^{M_u}[C^{M_u}]}$ has exactly one strong initial
component.  As usual, we denote $C_1^{M_u}$ by $C$ and $C_2^{M_u}$ by
$C'$. As $D[C_2]$ is isomorphic to $F_{2k-p}$, $p$ is even and the
digraph $D^{M_u}[C']$ is the complete bipartite digraph
$K_{(k-p/2,k-p/2)}$. We denote by $(A,B)$ its bipartition.

\begin{claim}
 \label{claim:C'F-claim1}
If there exists an arc $ab$ of $C$ such that there is an anti-path in
$C$ from $b$ to $a$, an anti-arc from $a$ to $A$ and an anti-arc from
$A$ to $b$, then $D$ admits a good cycle-factor.
\end{claim}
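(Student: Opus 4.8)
The plan is to work inside $D^{M_u}$, writing $C=C_1^{M_u}$ and $C'=C_2^{M_u}$, and to produce a good cycle-factor by a single switch followed by Lemma~\ref{lem:SommeCF}. The crucial preliminary observation is that we are free to replace $C'$ by \emph{any} Hamilton cycle of the complete bipartite digraph $D^{M_u}[C']=K_{(k-p/2,k-p/2)}$ with sides $A,B$: such a replacement reroutes only the $T\to S$ arcs of $C_2$, so it leaves the matching $M_u$, the digraph $D^{M_u}$, the bipartition $(A,B)$, and hence \emph{all} the hypotheses of the claim unchanged. I also note that $p$ is even in Case~B, so $p+1$ is odd; this will make the ``not a balanced complete bipartite digraph'' condition of Lemma~\ref{lem:SommeCF} automatic. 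Finally, Claim~\ref{claim:good-anti-cycle} is of no direct use here, since in $K_{(A,B)}$ every anti-path stays inside one side and can never join vertices $x,y$ with $x=C'(y)$.

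First I would fix $\alpha\in A$ with $a\alpha$ an anti-arc and $\alpha'\in A$ with $\alpha'b$ an anti-arc, chosen distinct (the degenerate case $\alpha=\alpha'$, forced only when $a$ has a unique anti-out-neighbour in $A$ equal to $b$'s unique anti-in-neighbour in $A$, is treated by a separate variant). Writing $P_C=b,w_1,\dots,w_m,a$ for the anti-path from $b$ to $a$ in $C$, and using that $A$ is independent in $D^{M_u}$, the sequence
\[
H = a,\ \alpha,\ \alpha',\ b,\ w_1,\dots,w_m
\]
is an anti-cycle of $D^{M_u}$, its consecutive pairs $a\alpha,\ \alpha\alpha',\ \alpha'b,\ bw_1,\dots,w_ma$ all being anti-arcs. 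Before switching I would use the freedom above to choose $C'$ so that the segment of $C'$ from $C'(\alpha')$ to $\alpha$ (that is, with $\alpha=C'^{\,p}(\alpha')$) has exactly $p$ vertices; this is possible since $p$ is even and $2\le p\le |C'|-2=2k-p-2$, which holds because $p\le k-1$.

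Then I would switch along $H$. By Corollary~\ref{cor:switch-cycle-factor} the digraph $D^{M'}$ carries a cycle-factor $B$ obtained from $\{C,C'\}$, and a direct check with Lemma~\ref{lem:switch} (the out-neighbourhoods are cyclically shifted along $H$) reveals its effect exactly: the two $C'$-arcs leaving $\alpha$ and $\alpha'$ get replaced, $b$ leaves $C$, and $C'$ splits into two cycles — one being the segment from $C'(\alpha)$ to $\alpha'$ closed by $\alpha'\to C'(\alpha)$, the other, call it $\gamma'$, being the segment from $C'(\alpha')$ to $\alpha$ together with $b$, closed by $\alpha\to b\to C'(\alpha')$ — while $V(C)\setminus\{b\}$ forms the remaining cycles. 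By the choice of $C'$ the cycle $\gamma'$ has exactly $p+1$ vertices; being a single cycle, $D^{M'}[\gamma']$ is strongly connected, and being of odd order it cannot be a balanced complete bipartite digraph. Hence Lemma~\ref{lem:SommeCF}, applied to $B$ with the sub-collection $\{\gamma'\}$, yields a good cycle-factor of $D$.

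The hard part is precisely the rigidity of the complete bipartite $C'$: it forbids Claim~\ref{claim:good-anti-cycle} and creates a balance obstruction, since deleting a single vertex of $C'$ unbalances the two sides and leaves no cycle-factor on the remainder. The device that overcomes it is to let the switch split $C'$ at its \emph{two} reroute points $\alpha\ne\alpha'$ and to pre-tune the two resulting arc-lengths by choosing the Hamilton cycle, so that one half is forced to have the target size $p+1$; the parity of $p+1$ then supplies the non-complete-bipartite condition for free. The remaining delicate point is the degenerate case $\alpha=\alpha'$, where only one reroute point exists and $C'$ does not split, which I expect must be handled by a distinct argument.
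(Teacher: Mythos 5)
Your proof is correct and follows essentially the same route as the paper's: both perform a switch along the anti-cycle obtained by closing the anti-path from $b$ to $a$ in $C$ through two vertices of $A$, after first exploiting the freedom to re-route the Hamiltonian cycle $C'$ of the complete bipartite digraph $D^{M_u}[C']$, and both conclude via Lemma~\ref{lem:SommeCF}/Lemma~\ref{lem:cf-to-2cf} using that $p+1$ is odd. The only difference is one of implementation --- the paper places the two $A$-vertices at distance $2$ on $C'$, obtains a $3$-cycle $a'bc$ plus a balanced complete bipartite remainder that it re-splits into a $(p-2)$-cycle to be merged with the $3$-cycle, whereas you place them at distance $p$ and read off the $(p+1)$-cycle directly --- and the degenerate case $\alpha=\alpha'$ that you flag but leave open is equally present, and equally unaddressed, in the paper's own proof, which tacitly assumes $a'\neq b'$ when it uses the anti-arc $a'b'$.
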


\begin{proof}
First denote by $a'$ the end of an anti-arc from $a$ to $A$ and by
$b'$ the beginning of an anti-arc from $A$ to $b$. We can assume that
there exists a vertex $c$ of $B$ such that $b'ca'$ is a subpath of
$C'$.  Indeed, as $D^{M_u}[C']$ is isomorphic to a complete bipartite
digraph, there exists a hamiltonian cycle $C''$ of $D^{M_u}[C']$
starting with $b'ca'$. We can then consider $C''$ instead of $C'$.  So
we assume that $b',c$ and $a'$ are consecutive along $C'$, and as $a'$
and $b'$ belong to $A$, $a'b'$ is an anti-arc of $D^{M_u}$.  So we
perform a switch exchange along the anti-cycle formed by the anti-path
from $b$ to $a$ in $C$, and the anti-arcs $aa'$, $a'b'$ and $b'b$. See
Figure~\ref{fig:casa21} which depicts this subcase. According to
Lemma~\ref{lem:switch}, we denote by $M'$ the new contracted matching
in $D$.
\begin{figure}
	\centering
	\includegraphics[width=0.8\linewidth]{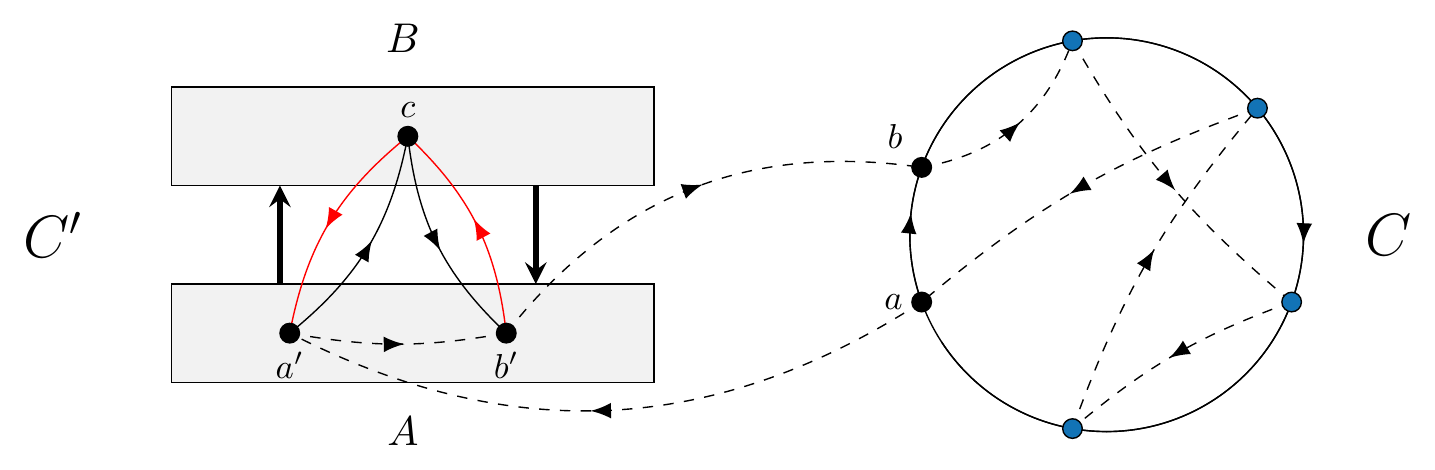}
	\caption{Illustrative example of the Claim~\ref{claim:C'F-claim1}.}
	\label{fig:casb}
\end{figure}
By Lemma~\ref{lem:switch} and Corollary~\ref{cor:switch-cycle-factor},
it is easy to see that we obtain a new cycle-factor $\cal C$ of
$D^{M'}$ containing the 3-cycle $C_3=a'bc$, a cycle $C_s$ containing
all the vertices of $C'\setminus \{a',c\}$ and other cycles included
into $C$. Notice that $D^{M'}[C_s]$ is a bipartite complete digraph on
$2k-p-2$ vertices and we can replace it with two cycles : $C_s'$ on
$p-2$ vertices and $C_s''$ on $2k-2p$ vertices (with $2k-2p\ge 2$).
Now, as $C_3$ contains a vertex of $A$ and $C_s'$ a vertex of $B$, the
union of these two cycles is strongly connected in $D^{M'}$. So using
Theorem~\ref{theo:haggvist-manoussakis} in $D^{M_u}$ there exists a
cycle of length $p+1$ spanning $C_s'\cup C_3$. Using this cycle and
the cycles of $({\cal C}\setminus \{C_3,C_s\})\cup C_s'$, by
Lemma~\ref{lem:cf-to-2cf} we form a 2-cycle-factor of $D^{M'}$ with
one being of length $p+1$. In particular, as $p+1$ is odd, this
cycle-factor of $D^{M'}$ corresponds to a good cycle-factor of $D$.
\end{proof}

As $A$ and $B$ have a symmetric role, Claim~\ref{claim:C'F-claim1}
still holds by replacing $A$ by $B$ in its statement.  Then, let us
consider two cases, according to the structure of $\overline{D^{M_u}[C]}$.
\medskip

\noindent
{\bf Case 1: $\overline{D^{M_u}[C]}$ is strongly connected} In this
case, let $v$ be a vertex of $C$ and $u$ its predecessor along $C$.
As $|C|=p<k$, the vertex $u$ is the beginning of an anti-arc which
ends in $C'$, that is in $A$ or $B$. So, if we cannot conclude with
Claim~\ref{claim:C'F-claim1}, it means than $v$ is dominated by $A$ or
$B$. As this is true for every vertex $v$, by a direct counting
argument, there exist a set $X_A$ of $p/2$ vertices of $C$ which are
dominated by $A$ and a set $X_B$ of $p/2$ vertices which are dominated
by $B$. Moreover, no vertex $x$ of $A$ dominates a vertex of $X_B$
(otherwise we would have $d_{D^{M_u}}(x)>k$) and no vertex of $B$
dominates a vertex of $X_B$. So, for every vertex $v$ of $C$ we have
$d^-_C(v)=k-(k-p/2)=p/2$.\\ Using the same argument between a vertex
$v$ and its successor along $C$ we obtain that every vertex $v$ of $C$
satisfies $d^+_C(v)=p/2$. It means that in $D$, the bipartite
tournament $D[C_1]$ contains $2p$ vertices, is not isomorphic to
$F_{2p}$ and satisfies $d^+(u)=d^-(u)=p$ for each of its vertex
$u$.\par Now, assume first that $p\ge 5$. By induction, provided that
$p\ge 5$, the bipartite tournament $D[C_1]$ has at least 10 vertices
and admits a 2-cycle-factor $(C_{\rm ind},C_{\rm ind}')$ with $C_{\rm
  ind}$ being of length 6.  In $D^{M_u}$, we have a 2-cycle-factor
$(F_{\rm ind}, F_{\rm ind}')$ with $F_{\rm ind}$ being of length
3. Let $xy$ be an arc of $F_{\rm ind}$. As $d^+_{C'}(x)\ge k-p>0$ and
$d^-_{C'}(y)\ge k-p>0$, there exist $x'$ and $y'$ in $C'$ such that
$xx'$ and $y'y$ are arcs of $D^{M_u}$. Now, $D^{M_u}[C']$ being a
complete bipartite digraph on $2k-p$ vertices, it admits a
2-cycle-factor $(C_s,C'_s)$ such that $C_s$ contains $x'$ and $y'$ and
is of length $p-2$. So, using Lemma~\ref{lem:SommeCF}, as
$D^{M_u}[C_s\cup F_{\rm ind}]$ is strongly connected, $D$ admits
a good cycle-factor.\par To conclude, assume that $p=4$. As every
vertex $x$ of $C$ satisfies $d^+_C(x)=d^-_C(x)=2$ and
$\overline{D^{M_u}[C]}$ is strongly connected, it is easy to see that
the anti-arcs of $C$ form a cycle of length 4.  If $C$ contains a
vertex $x$ such that $x$ has an in-neighbor $x'$ in $A$ and an
out-neighbor $y'$ in $B$, then we form a good cycle-factor of
$D^{M_u}$ with : a cycle of length 3 in $C\setminus x$, a cycle of
length $p+1=5$ containing $x'$, $x$, $y'$ and another vertex $x''$ of
$A$ and another vertex $y''$ of $B$, and a cycle of length $2k-8$
covering $C'\setminus \{x',x'',y',y''\}$. Otherwise, it means that we
can write $V(C)=\{a,a',b,b'\}$ such that the in- and out-neighborhood
of $a$ and $a'$ in $C'$ are exactly $A$ and that the in- and
out-neighborhood of $b$ and $b'$ in $C'$ are exactly $B$. Moreover, as
every pair of vertices are linked by at least one arc in $D^{M_u}[C]$,
we can assume that $aa'$ and $bb'$ are arcs of $D^{M_u}$. Then, as
$k>p$, we have $2k-p\ge 6$ and we can select three vertices $a_1, a_2,
a_3$ in $A$ and three vertices $b_1, b_2, b_3$ in $B$. Then, we form a
good cycle-factor by applying Lemma~\ref{lem:cf-to-2cf} to the cycles $aa'a_1b_2a_3$ and
$bb'b_1a_2b_3$ of length $p+1=5$ and a cycle covering $C'\setminus
\{a_1,a_2,a_3,b_1,b_2,b_3\}$.
\medskip

\def\S{Y}

{\bf Case 2: $\overline{D^{M_u}[C]}$ is not strongly connected}.
However, as $(C_1,C_2)$ satisfies ${\cal P}_{\rm up}$, we know that
$\overline{D^{M_u}[C]}$ contains only one strong initial component,
denoted by $\S_1$.  Let $\S_2$ be $V(C)\setminus \S_1$. In particular,
all the arcs from $\S_2$ to $\S_1$ exist in $D^{M_u}$. Moreover, there
exists an arc $xy$ of $C$ with $x\in \S_2$ and $y\in \S_1$. As $\S_1$
is the only initial component of $\overline{D^{M_u}[C]}$ there exists
an anti-path from $y$ to $x$ in $\overline{D^{M_u}[C]}$. As $|C|=p<k$,
there exists a vertex $x'$ in $C'$ such that $xx'$ is an anti-arc of
$D^{M_u}$. Without loss of generality, we can assume that $x'\in
A$. By Claim~\ref{claim:C'F-claim1}, if $y$ has an anti-in-neighbor in
$A$, then $D$ admits a good cycle-factor. So we can assume that $A$
dominates $y$. Similarly, $y$ has an anti-in-neighbor in $C'$ which
must be in $B$ and if we cannot conclude with
Claim~\ref{claim:C'F-claim1}, it means that $x$ dominates $B$. As $x$
dominates also $\S_1$ and $y$ is dominated by $\S_2$, we have
$|\S_1|=|\S_2|=p/2$ and the out-neighborhood of $x$ is exactly $B\cup
S_1$ and the in-neighborhood of $y$ is exactly $A\cup \S_2$. Now,
assume that $z$, the successor of $y$ along $C$ is in $\S_1$.  Notice
that $zy$ is an anti-arc of $D^{M_u}$. As $\S_2$ and $y$ dominate $z$,
it has at least one anti-in-neighbor in $A$ and at least one
anti-in-neighbor in $B$ (otherwise, we would have $d^-(z)\ge
k-p/2+p/2+1=k+1$). As $y$ has an anti-out-neighbor in $C'$, wherever
it is, in $A$ or $B$, we can conclude with
Claim~\ref{claim:C'F-claim1}. Otherwise, it means that $z$ is in
$\S_2$. Symmetrically, the predecessor of $x$ along $C$ is in
$\S_1$. Repeating the argument, we conclude that $C$ alternates
between $\S_1$ and $\S_2$. Indeed $C$ cannot induce a path of positive
length in $\S_1$ for instance : the first vertex of such a path would
be the end of an arc from $\S_2$ to $\S_1$ and the second vertex of
the path would lie in $\S_1$ then. Also the conclusions we had for $x$
and $y$ respectively hold for all vertices of $\S_1$ and $\S_2$. So,
we have $\S_1$ is dominated by $A$ and $\S_2$ and is an independent
set of $D^{M_u}$, and $\S_2$ dominates $B$ and $\S_1$ and is also an
independent set of $D^{M_u}$. We deduce that the in-neighborhood of
$B$ is exactly $A\cup \S_2$ and then the out-neighborhood of $\S_1$ is
$A\cup \S_2$ also. In particular, $\S_1$ dominates $\S_2$ and there is
no anti-path from $\S_1$ to $\S_2$, providing a contradiction, as
$\S_1$ is the only initial strong component of
$\overline{D^{M_u}[C]}$.

 \subsection{Case C: $(C_1,C_2)$  satisfies ${\cal Q}_{\rm down}$}

As we assume that we are not in Case~A, the digraph
$\overline{D^{M_u}[C_2^{M_u}]}$ has at least two initial strong
components, and at least two terminal strong components (as noticed at
the end of Case A).\\
Besides, if $(C_1,C_2)$ satisfies ${\cal P}_{\rm
  down}$, then by exchanging the role of $S$ and $T$, we are in the
symmetrical case of Case A. Then, we can assume that $(C_1,C_2)$ does
not satisfy ${\cal P}_{\rm down}$. Once again, we denote $C_1^{M_u}$
by $C$ and $C_2^{M_u}$ by $C'$. So, by Claim~\ref{claim:not-Pdown},
for every vertex $x$ of $C'$ either there is no arc from $x$ to $C$ or
there is no arc from $C$ to $x$.\\

Before concluding the proof of Theorem~\ref{theo:2cfbis}, we need the
two following claims.


\begin{claim}
\label{claim:noarc-C-z}
If $D^{M_u}[C']$ contains a vertex $x$ such that there is no arc
between $x$ and $C$, then $D$ admits a good cycle-factor.
\end{claim}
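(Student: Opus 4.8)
The plan is to build an anti-cycle meeting the hypothesis of Claim~\ref{claim:good-anti-cycle}, exploiting that $x$ is anti-complete to $C$ on both sides. First I would unpack the hypothesis. Keeping the Case~C notation $C=C_1^{M_u}$, $C'=C_2^{M_u}$, ``no arc between $x$ and $C$'' means that $x\to c$ and $c\to x$ are both anti-arcs for every $c\in C$; equivalently, in $D$ the vertex $x$ dominates $M_u(C)$ while $C$ dominates $M_u(x)$. Two consequences orient the proof. Since we are assuming $(C_1,C_2)$ fails ${\cal P}_{\rm down}$, Claim~\ref{claim:not-Pdown} says every vertex of $C'$ has no out-arc to $C$ or no in-arc from $C$; in particular no vertex of $C'$ has both an in- and an out-neighbour in $C$, so Claim~\ref{claim:extendingC} can never be invoked to insert a $C'$-vertex into $C$. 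Moreover $x$ cannot sit on any cycle using only $V(C)\cup\{x,M_u(x)\}$, since the only in-neighbours of $x$ lie in $C'$. Hence the desired length-$2(p+1)$ cycle must be produced by a matching switch, that is, through Claim~\ref{claim:good-anti-cycle}.

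The idea is that the ``crossing'' arcs of the target anti-cycle are free at $x$: for every $c\in C$, both $x\to c$ and $c\to x$ are anti-arcs. So I would close an anti-cycle from $x$, one of its $C'$-neighbours, and a single vertex of $C$. Let $x^-$ and $x^+$ be the predecessor and successor of $x$ along $C'$. Applying the dichotomy of Claim~\ref{claim:not-Pdown} to $x^-$ and $x^+$, in the generic situation either $x^-$ has an anti-out-neighbour $c\in C$ or $x^+$ has an anti-in-neighbour $c\in C$. In the first case, an anti-path from $x$ to $x^-$ inside $C'$ together with the anti-arcs $x^-\to c$ and $c\to x$ forms an anti-cycle $H$ whose part outside $C$ is an anti-path from $x$ to $x^-$ with $x=C'(x^-)$; this is precisely the hypothesis of Claim~\ref{claim:good-anti-cycle}, and we are done. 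The second case is symmetric, using an anti-path from $x^+$ to $x$ and the anti-arcs $x\to c$, $c\to x^+$.

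The hard part, where I would spend most of the effort, is securing the anti-path inside $C'$ that closes the anti-cycle, and dealing with the degenerate orientation around $x$. In Case~C the digraph $\overline{D^{M_u}[C']}$ is only known to be non-strong, so the required anti-path from $x$ to $x^-$ (or from $x^+$ to $x$) may fail to exist, and $x$ may even form digons with both $x^-$ and $x^+$; this is forced, for instance, when $k=p+1$, where $x$ has no anti-arc at all inside $C'$. To absorb these cases I would use the remaining freedom: choosing between predecessor and successor, and invoking the symmetries ${\cal P}_{\rm up}\!\leftrightarrow\!{\cal P}_{\rm down}$ and $S\leftrightarrow T$ (hence $M_u\leftrightarrow M_d$); when even this is not enough, I would either route through a longer anti-path in $C'$ or reduce to Claim~\ref{claim:external-arc} applied to the arc $x^-\to x$ of $C'$, which crosses two strong components of $\overline{D^{M_u}[C']}$. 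Finally, the residual configuration in which $x^-$ dominates all of $C$ and $C$ dominates all of $x^+$ should be eliminated by a degree count against the regularity of $D^{M_u}$ together with the standing property ${\cal P}_{\rm up}$, which supplies a consecutive pair of $C'$ with an anti-out-neighbour and an anti-in-neighbour in $C$.
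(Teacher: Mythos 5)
There is a genuine gap, and it sits exactly where you put ``the hard part''. Your whole plan funnels through Claim~\ref{claim:good-anti-cycle} (or Claim~\ref{claim:external-arc}), and both of those need an anti-path inside $D^{M_u}[C']$ joining $x$ to one of its $C'$-neighbours. In Case~C the digraph $\overline{D^{M_u}[C']}$ is not strong, so that anti-path can fail to exist, and in the extremal situation $p=k-1$ it provably does not: a vertex $x$ with no arc to or from $C$ has all $k$ out-neighbours and all $k$ in-neighbours among the $2k-p-1=k$ other vertices of $C'$, so $x$ is in a digon with every vertex of $C'\setminus\{x\}$ and is isolated in $\overline{D^{M_u}[C']}$. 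Then no anti-cycle through $x$ meets $C'$ in two vertices (which Claim~\ref{claim:good-anti-cycle} requires, since its anti-path from $u$ to $v$ with $u=C_2^{M}(v)$ forces at least two $C'$-vertices), and Claim~\ref{claim:external-arc} applied to the arc $x^-x$ needs an anti-path from $x$ to $x^-$, which again does not exist. The symmetries you invoke are already spent (Case~C assumes ${\cal P}_{\rm up}$ holds and ${\cal P}_{\rm down}$ fails), so none of the listed escape routes closes this case.

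The missing idea is that the new cycle need not come from a matching switch at all --- your assertion that it ``must'' is what locks you out. The paper instead performs a direct path exchange in $D$: take the subpath $x,y,z,t$ of $C'$ and, using Claim~\ref{claim:not-Pdown} on $t$, find (in the main subcase) a subpath $a,b,c$ of $C$ with $\{a,x\}$ and $\{c,t\}$ independent in $D^{M_u}$; the independence of $\{a,x\}$ is free precisely because $x$ has no arc to or from $C$. Splicing $a\,C_2(x)\,y\,C_2(y)\,z\,C_2(z)\,t\,C_1(c)$ into $C_1$ in place of $a\,C_1(a)\,b\,C_1(b)\,c\,C_1(c)$ (and the reverse splice into $C_2$) grows $C_1$ by exactly one contracted vertex, with no hypothesis whatsoever on anti-paths inside $C'$. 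The remaining subcase, where $C$ dominates $t$, is then handled by a strong-component analysis of $\overline{D^{M_u}[C']}$ that pins down the terminal components before falling back on Claim~\ref{claim:external-arc} or a second exchange; your closing ``degree count'' does not produce a contradiction there, since $x^-$ dominating $C$ and $C$ dominating $x^+$ is perfectly consistent with $k$-regularity.
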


\begin{proof}
Otherwise, let $x$ be such a vertex and call $x,y,z,t$ the subpath of
$C'$ of length 3 starting from $x$. We know that either there is no
arc from $C$ to $t$ or no arc from $t$ to $C$. Assume that the latter
holds, the other case could be treated symmetrically.  If there also
exists an anti-arc from $C$ to $t$, then we can find three vertices
$a$, $b$ and $c$ in $C$ such that $a,b,c$ is a subpath of $C$ of
length 2 and that $\{a,x\}$ and $\{c,t\}$ are independent sets of
$D^{M_u}$. So, we will exchange some small paths between $C$ and
$C'$.\par First, notice that $p>3$. Indeed, if we denote by $A_C$
(resp. $B_C$) the set of vertices of $C'$ which anti-dominate $C$
(resp. are anti-dominating by $C$), we have $V(C')=A_C\cup B_C$, $x\in
A_C\cap B_C$ and then $|A_C|+|B_C|\ge |A_C \cup
B_C|+1=|C'|+1=2k-p+1\ge 2k-2$ if $p\le 3$. So, as $|A_C|\le k-1$ and
$|B_C|\le k-1$, we have $|A_C|=|B_C|=k-1$ and $A_C\cap
B_C=\{x\}$. Moreover, we have $t\in A_c\setminus \{x\}$ and so
$\overline{N^+}(c)$ contains $B_C\cup \{t\}$ of size $k$, a
contradiction.\\ Now, to perform the path exchange, let us depict the
situation in $D$: $C_1$ contains the path $aC_1(a)bC_1(b)cC_1(c)$
and $C_2$ contains the path
$xC_2(x)yC_2(y)zC_2(z)tC_2(t)$. Moreover, in $D$, $x$ dominates
$V(C_1)\cap T$, $C_2(x)$ is dominated by $V(C_1)\cap S$, there is an
arc from $c$ to $C_2(t)$ and an arc from $t$ to $C_1(c)$. So, we
replace in $C_1$ the path $aC_1(a)bC_1(b)cC_1(c)$ by
$aC_2(x)yC_2(y)zC_2(z)tC_1(c)$ to obtain the cycle
$\tilde{C_1}$ and we replace in $C_2$ the path $xC_2(x)y$
$C_2(y)zC_2(z)tC_2(t)$ by $xC_1(a)bC_1(b)cC_2(t)$ to obtain
the cycle $\tilde{C_2}$. The cycles $(\tilde{C_1},\tilde{C_2})$ form a
$(2(p+1),4k-2(p+1))$-cycle-factor of $D$. Moreover, $\tilde{C_1}$ is
not isomorphic to $F_{2(p+1)}$. Indeed, as $p>3$, the cycle
$\tilde{C_1}$ contains the predecessor $u$ of $a$ along $C_1$ (which
is not $C_1(c)$ then). Let call $d$ the vertex of $C_1$ with
$C_1(d)=u$. As $C_2(x)$ is dominated by $V(C_1)\cap S$ in $D$, there
is an arc from $d$ to $C_2(x)$. Thus, $\tilde{C_1}$ is not isomorphic
to $F_{2(p+1)}$, as it contains the path $dC_1(d)aC_2(x)$ and the
arc $dC_2(x)$ while $F_{2(p+1)}$ does not contain such a
sub-structure.


So, we can assume that $C$ dominates $t$. Call by $\S$ the strong
component of $\overline{D^{M_u}[C']}$ containing $t$. There exists
$\S_{\rm term}$ a terminal strong component of
$\overline{D^{M_u}[C']}$ distinct from $\S$. Let $\S'$ be the union of
the strong components of $\overline{D^{M_u}[C']}$ different from
$\S_{\rm term}$ and $\S$. As $\S_{\rm term}$ is a terminal strong
component of $\overline{D^{M_u}[C']}$, for any vertex $u$ of $\S_{\rm
  term}$, we have $k-1-|C|\le \overline{d^+_{C'}}(u) \le |\S_{\rm
  term}|-1$ and then $|\S_{\rm term}|\ge k-p$ (noticed that a
symmetrical reasoning holds for initial strong components
also). Moreover, as $C$ and $\S_{\rm term}$ dominate $t$ we must have
exactly $|\S_{\rm term}|= k-p$ and the in-neighborhood of $t$ is
exactly $C\cup \S_{\rm term}$. In particular, $z$ belongs to $\S_{\rm
  term}$ and $\S$ is the other terminal strong component of
$\overline{D^{M_u}[C']}$. Moreover, as $\S'\cup \S$ has size $k$ and
is dominated by $\S_{\rm term}$, then $\S'\cup \S$ is exactly the
out-neighborhood of each vertex of $\S_{\rm term}$. Thus there is no
arc from $\S_{\rm term}$ to $C$. We look at two cases to conclude the
proof.

First assume that $C$ dominates $z$. As $z$ is dominated by $\S$
(recall that $\S$ is a terminal strong component of
$\overline{D^{M_u}[C']}$) and $\S$ has size at least $k-p$, then we
have that $\S\cup C$ is exactly the in-neighborhood of $z$ and that
$|\S|=k-p$. Finally, $\S'$ is non empty (of size $2k-2p$) and there is
no arc from $\S'$ to $\{z,t\}$. To conclude, let $uv$ be an arc of
$C'$ with $u\in \S_{\rm term}\cup \S$ and $v\in \S'$ (such an arc
exits as $\S'\neq \emptyset$). By the previous arguments there exists
an anti-path from $v$ to $u$ in $\overline{D^{M_u}[C']}$ and by
Claim~\ref{claim:external-arc}, we conclude that $D$ admits a good
cycle-factor.

Now, if there is an anti-arc from $C$ to $z$, then we will conclude
as previously using a small path exchange between $C$ and
$C'$. Indeed, if such an anti-arc exists between a vertex $b'$ of $C$
and $z$, call $a'$ the predecessor of $b'$ along $C$. So, $\{a',x\}$
and $\{b',z\}$ are independent sets of $D^{M_u}$. Then, in $D$ we
exchange the path $a'C(a')b'C(b')$ of $C_1$ with the path
$a'C'(x)yC'(y)zC'(z)$ to obtain the cycle
$\tilde{C_1}$. Similarly, we exchange the path
$xC'(x)yC'(y)zC'(z)$ of $C_2$ with the path $xC(a')b'C'(z)$ to
obtain the cycle $\tilde{C_2}$. Then, $(\tilde{C_1},\tilde{C_2})$
forms a good cycle-factor of $D$, as in particular, denoting by $c'$
the predecessor of $a'$ along $C$, $\tilde{C_1}$ contains in $D$ the
path $c'C(c')a'C'(x)$ and the arc $c'C'(x)$ and so $\tilde{C_1}$ is
not isomorphic to $F_{2(p+1)}$ which does not contain such a
sub-structure.

\end{proof}

The last claim will show that every arc of $C'$ is contained in a
digon.

\begin{claim}
\label{claim:C'digoned}
If $D^{M_u}[C']$ contains an arc $xy$ such $yx$ is not an arc of
$D^{M_u}$, then $D$ admits a good cycle-factor.
\end{claim}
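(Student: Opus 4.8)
The plan is to route the non-arc information into one of the two mechanisms already available: Claim~\ref{claim:external-arc}, which needs a cross-component arc of $C'$, and Claim~\ref{claim:good-anti-cycle}, which needs an anti-cycle whose part outside $C$ is a single anti-path closing over one arc of $C'$. Throughout I keep the standing hypotheses of Case~C and of Claim~\ref{claim:noarc-C-z}: writing $C=C_1^{M_u}$ and $C'=C_2^{M_u}$, Claim~\ref{claim:not-Pdown} gives that every vertex of $C'$ either anti-dominates $C$ or is anti-dominated by $C$, and since we may assume Claim~\ref{claim:noarc-C-z} does not apply, no vertex does both. Thus $V(C')$ splits as $A_C\cup B_C$, where $A_C$ anti-dominates $C$ (sends no arc to $C$, but receives one) and $B_C$ is anti-dominated by $C$ (receives no arc from $C$, but sends one); both are nonempty, since $p<k$ forces arcs between $C$ and $C'$ in both directions.

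First I would dispose of the favourable configurations. Suppose $xy$ is an arc of the cycle $C'$ itself, so $y=C'(x)$. If moreover $x\in A_C$ and $y\in B_C$, then $x$ anti-dominates $C$ and $C$ anti-dominates $y$, so for any vertex $c\in C$ the triple $y,x,c$ is an anti-cycle (its anti-arcs being $yx$, $xc$, $cy$) whose part outside $C$ is the anti-path $y,x$ with $y=C'(x)$; Claim~\ref{claim:good-anti-cycle} then produces a good cycle-factor. If instead the endpoints of the cycle arc $xy$ lie in different strong components of $\overline{D^{M_u}[C']}$, the single anti-arc $yx$ is already an anti-path from $y$ to $x$, so Claim~\ref{claim:external-arc} applies with $a=x$ and $b=y$. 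These observations settle every non-arc that either crosses two components or joins an $A_C$-vertex to a $B_C$-vertex along $C'$.

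For the remaining cases the strategy is uniform. Because $A_C$ and $B_C$ are both nonempty and $C'$ is a single cycle, there is a cycle arc $\beta\to\alpha$ of $C'$ with $\beta\in A_C$ and $\alpha\in B_C$; for such an arc $\beta$ anti-dominates $C$ and $C$ anti-dominates $\alpha$, so a single detour vertex $c\in C$ always closes an anti-cycle meeting Claim~\ref{claim:good-anti-cycle}, provided I can supply an anti-path from $\alpha$ to $\beta$ inside $C'$ (this is the part outside $C$, and $\alpha=C'(\beta)$ is exactly the closing condition). It is here that the given anti-arc $yx$ and the reachability imposed by the component structure of $\overline{D^{M_u}[C']}$ would be used to build the anti-path. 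I expect the genuine obstacle to be the degenerate configurations in which the only candidate exit vertices of $C'$ dominate all of $C$ (out-heavy vertices of $B_C$) or the only candidate entry vertices are dominated by all of $C$ (in-heavy vertices of $A_C$): then no single-vertex detour through $C$ exists. There I would either replace the detour vertex by a longer anti-path inside $\overline{D^{M_u}[C]}$, exploiting the reachability available there, or, when even that is blocked, fall back on an explicit path exchange in $D$ between $C_1$ and $C_2$ in the spirit of Claim~\ref{claim:noarc-C-z}, enlarging $C_1$ by one digon while checking---using that $C_1$ is a cycle on $2p$ vertices with $p$ odd---that the enlarged cycle is not isomorphic to $F_{2(p+1)}$. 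Reconciling the two simultaneous demands, crossing from $A_C$ to $B_C$ along a single arc of $C'$ while the complementary anti-path remains available, is where the main difficulty lies.
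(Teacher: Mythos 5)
Your opening moves coincide with the paper's: the case where $x$ and $y$ lie in different strong components of $\overline{D^{M_u}[C']}$ is dispatched by Claim~\ref{claim:external-arc} exactly as in the text, and the case where $x$ anti-dominates $C$ while $C$ anti-dominates $y$ closes an anti-triangle through any $c\in C$ and yields to Claim~\ref{claim:good-anti-cycle}. But the remaining case --- $x$ and $y$ in the same strong component $\S$, with the pair not split between $A_C$ and $B_C$ --- is precisely where the claim's real content lies, and your proposal leaves it as an acknowledged plan rather than an argument. The paper's treatment of this case has three ingredients you do not supply. First, using that $\overline{D^{M_u}[C']}$ has at least two initial and two terminal strong components (each of size at least $k-p$), one extracts an anti-out-neighbour $x'$ of $x$ in $C$ and an anti-in-neighbour $y'$ of $y$ in $C$, and a degree count forces either a single detour vertex $u\in C$ with $xu$ and $uy$ anti-arcs (done by Claim~\ref{claim:good-anti-cycle} with the anti-path from $y$ to $x$ inside $\S$) or the inequality $\overline{d^+_{C'}}(x)+\overline{d^-_{C'}}(y)\ge 2k-p$. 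Second, if no vertex $z\in C'$ completes an anti-triangle $yxz$, that inequality lets one partition $V(C')\setminus\{x,y\}$ into a set anti-dominated by $x$ and a set anti-dominating $y$, and a cycle arc of $C'$ leaving the family of components contained in the first set feeds Claim~\ref{claim:external-arc}. Third --- and this is the idea entirely absent from your sketch --- when such a $z$ does exist, the paper performs a \emph{switch along the anti-cycle $yxz$} (Lemma~\ref{lem:switch}), checks that the strong components of $\overline{D^{M_u}[C']}$ are unchanged so that ${\cal Q}_{\rm down}$ persists, and shows that the new pair now satisfies ${\cal P}_{\rm down}$, which reduces everything to the already-settled (symmetric of) Case~A.

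Your substitute strategies for this hard case are unlikely to close it. Working with an arbitrary cycle arc $\beta\alpha$ from $A_C$ to $B_C$ abandons the hypothesis about the specific non-digon arc $xy$, and nothing guarantees an anti-path from $\alpha$ to $\beta$ inside $C'$ (if $\alpha\beta$ is itself an arc and the two vertices sit in components ordered the wrong way, neither Claim~\ref{claim:good-anti-cycle} nor Claim~\ref{claim:external-arc} is applicable to that arc). Likewise, a path exchange ``in the spirit of Claim~\ref{claim:noarc-C-z}'' needed a vertex of $C'$ with \emph{no} arc to or from $C$, a configuration you have already excluded by assuming that claim does not apply; and inserting a single vertex of $C'$ into $C_1$ requires the arc pattern of Claim~\ref{claim:extendingC}, which is not available here. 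You have correctly located where the difficulty sits, but the mechanism that resolves it --- converting the obstruction into property ${\cal P}_{\rm down}$ via a switch and falling back on Case~A --- is missing, so the proof is incomplete.
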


\begin{proof}
Assume that $xy$ is an arc of $C'$ such that $yx$ is an anti-arc of
$D^{M_u}$. If $x$ and $y$ are not in the same strong component of
$\overline{D^{M_u}[C']}$, then we conclude with
Claim~\ref{claim:external-arc}.  Otherwise, we assume that $x$ and $y$
lie in a same strong component $\S$ of $\overline{D^{M_u}[C']}$.  As
$\overline{D^{M_u}[C']}$ has at least two initial and two terminal
strong components, there exist an initial strong component $\S_{\rm
  init}$ and a terminal strong component $\S_{\rm term}$ of
$\overline{D^{M_u}[C']}$ which are different from $\S$ (with possibly
$\S_{\rm init}=\S_{\rm term}$).  As previously noticed, we have
$|\S_{\rm init}|\ge k-p$ and $|\S_{\rm term}|\ge k-p$. And as $\S_{\rm
  init}$ and $\S_{\rm term}$ are different from $\S$ then, $\S$
dominates $\S_{\rm init}$ and is dominated by $\S_{\rm term}$.  In
particular, as $x$ and $y$ lie in $\S$ and $xy$ is an arc of $C'$, $x$
has at least an anti-out-neighbor $x'$ in $C$ and $y$ has at least an
anti-in-neighbor $y'$ in $C$ (otherwise, considering $\S_{\rm init}$
or $\S_{\rm term}$ we would have $d^+_{D^{M_u}}(x)\ge k+1$ or
$d^-_{D^{M_u}}(y)\ge k+1$).  If $x'y$ or $xy'$ is not an arc of
$D^{M_u}$ then we conclude with Claim~\ref{claim:good-anti-cycle},
using an anti-path from $y$ to $x$ in $\S$. So we assume that $x'y$
and $xy'$ are arcs of $D^{M_u}$.  Similarly, if we have
$\overline{d^+_C(x)}+\overline{d^-_C(y)}>p$ then there exists a vertex
$u$ in $C$ such that $xu$ and $uy$ are anti-arcs of $D^{M_u}$ and we
conclude with Claim~\ref{claim:good-anti-cycle}. Thus we assume that
we have $\overline{d^+_C(x)}+\overline{d^-_C(y)}\le p$ and then that
$\overline{d^+_{C'}(x)}+\overline{d^-_{C'}(y)}\ge 2k-p$. If there is
no vertex $z$ in $V(C')$ such that $yxz$ is an anti-cycle, then we can
partition $V(C')\setminus \{x,y\}$ into two sets $X$ and $Y$ such that
$x$ anti-dominates $X$ and $Y$ anti-dominates $y$. Call ${\mathcal
  \S}_X$ the set of strongly connected components of
$\overline{D^{M_u}[C']}$ which are included into $X$.  Notice that
${\mathcal \S}_X$ is not empty as it contains all the terminal strong
component of $\overline{D^{M_u}[C']}$, but does not contain any
initial strong component of $\overline{D^{M_u}[C']}$ . Now, consider
any arc $uv$ of $C'$ going from a component $\S_1$ of ${\mathcal
  \S}_X$ to a component $\S_2$ not belonging to ${\cal \S}_X$. As
$\S_2$ contains a vertex of $\{x,y\}\cup Y$, there exists an anti-path
from $v$ to $x$. And as $\S_1$ belongs to ${\cal \S}_X$, there is an
anti-arc from $x$ to $u$. So using Claim~\ref{claim:external-arc}, we
can conclude that $D$ admits a good cycle-factor.\par Thus we assume
that there exists a vertex $z$ in $V(C')$ such that $yxz$ is an
anti-cycle of $D^{M_u}$. By Claim~\ref{claim:noarc-C-z}, we can assume
that there exists an arc between $z$ and $C$.  Without loss of
generality assume that there is an arc $zz'$ from $z$ to $C$.  We will
perform a switch exchange along the anti-cycle $yxz$ and show that the
2-cycle-factor that we obtain will satisfy ${\cal P}_{\rm down}$ and
${\cal Q}_{\rm down}$. Denote by $M'_u$ the perfect matching of $D$
obtain from $M_u$ by switch exchange along $yxz$ (that is
$M'_u=(M_u\setminus \{xM_u(x),yM_u(y),zM_u(z)\})\cup
\{xM_u(y),yM_u(z),zM_u(x)\}$). So, when performing the switch exchange
along $yxz$, by Lemma~\ref{lem:switch}, we obtain
$N^+_{D^{M'_u}}(x)=N^+_{D^{M_u}}(y)$,
$N^+_{D^{M'_u}}(y)=N^+_{D^{M_u}}(z)$ and
$N^+_{D^{M'_u}}(z)=N^+_{D^{M_u}}(x)$. In $D^{M_u}$, call by $P_1$ the
sub-path of $C'$ going from the successor of $y$ (along $C'$) to the
predecessor of $z$ and by $P_2$ the sub-path of $C'$ going from the
successor of $z$ to the predecessor of $x$.  Then, after the switch
exchange, the cycle $C'$ becomes in $D^{M'_u}$ the cycle
$C''=xP_1zyP_2$. We denote by $C_2'$ its corresponding cycle in
$D$. Notice that the strong components of $\overline{D^{M'_u}[C'']}$
are the same than the ones of $\overline{D^{M_u}[C']}$.  Indeed, as
the anti-cycle $yxz$ becomes the anti-cycle $xyz$ in $D^{M'_u}$, the
permutation of the anti-out-neighborhoods of $x$, $y$ and $z$ does not
affect the strong components of $\overline{D^{M_u}[C']}$ and their
relationships. So, $(C_1,C_2')$ still satisfies ${\cal Q}_{\rm
  down}$. Finally, remind that in $D^{M_u}$, the vertex $z$ has an
out-neighbor $z'$ in $C$ and $y$ an in-neighbor $y'$ in $C$. As we
have $N^+_{D^{M'_u}}(y)=N^+_{D^{M_u}}(z)$, the arcs $y'y$ and $yz'$
belong to $D^{M_u}$. Thus, $(C_1,C_2')$ now satisfies ${\cal P}_{\rm
  down}$ and we can conclude with the symmetrical case of Case~A.
\end{proof}

Finally we can assume that every arc of $C'$ is in a digon. We write
$C'=u_1,\dots ,u_l$ with $l=2k-p$. The indices of vertices of $C'$
will be given modulo $l$.  Then we consider two cases:

\paragraph*{Case 1: $p$ is odd.} Then $l=2k-p$ is also odd. As
$\overline{D^{M_u}[C']}$ is not strongly connected, there exists $i\in
\{1,\dots ,l\}$ such that $u_iu_{i-2}$ is an arc of $D^{M_u}$. Without
loss of generality, we can assume that $u_lu_{l-2}$ is an arc of
$D^{M_u}$. We consider then the set $X=\{u_1,u_5,u_7,\dots
,u_{l-p+1}\}$, that is all the vertices $u_i$ with odd $i$ between 1
and $l-p+1$, except $u_3$. Notice that $X$ has size
$(l-p+1-1)/2-1=k-p$. If there is no arc between $X$ and $u_3$, as
$u_3$ has no arc to $C$ or no arc from $C$, we would have
$\overline{d^+_{D^{M_u}}}(u_3)\ge k$ or
$\overline{d^-_{D^{M_u}}}(u_3)\ge k$, a contradiction. So, there
exists an arc between $u_3$ and some $u_i\in X$. If $i=1$, then we
consider the cycle-factor ${\cal C'}$ on $2k-p-1$ vertices containing
$C$ and the cycles with vertex sets $\{u_1,u_2,u_3\},\{u_4,u_5\},\dots
,\{u_{l-p-2},u_{l-p-1}\}$ and the cycle-factor ${\cal C}$ on $p+1$
vertices containing the cycles with vertex set
$\{u_{l-p},u_{l-p+1}\},\{u_{l-p+2},u_{l-p+3}\},\dots
,\{u_{l-1},u_{l}\}$. Notice that $D^{M_u}[u_{l-p},\dots ,u_l]$ is
strongly connected and is not a complete bipartite graph, as it
contains the cycle $u_{l-2},u_{l-1}u_l$. So, by
Lemma~\ref{lem:SommeCF}, the digraph $D$ admits a good cycle-factor.
Now, if $i=u_{l-p+1}$, then we consider the cycle-factor ${\cal C}$ on
$p+1$ vertices containing the cycles with vertex sets
$\{u_{l-p+2},u_{l-p+3}\},\dots ,\{u_{l-1},u_{l}\}$, $\{u_1,u_2\}$, and
the cycle-factor ${\cal C'}$ on $2k-p-1$ vertices containing $C$ and
only the cycle with vertex set $\{u_3,u_4,\dots
,u_{l-p},u_{l-p+1}\}$. We conclude as previously.  Finally, if $i\in
\{5,7,\dots ,l-p-1\}$, then we choose ${\cal C}$ to be the
cycle-factor on $p+1$ vertices containing the cycles with vertex set
$\{u_{l-p},u_{l-p+1}\},\{u_{l-p+2},u_{l-p+3}\},\dots
,\{u_{l-1},u_{l}\}$ and ${\cal C'}$ the one containing the cycles with
vertex set $\{u_{1},u_{2}\},\{u_{3},u_4, \dots ,u_{i}\}, \{u_{i+1},u_{i+2}\}\dots
,\{u_{l-p-2},u_{l-p-1}\}$. Once again, we conclude as previously.

\paragraph*{Case 2: $p$ is even.} If there exists an arc between two
vertices at distance 2 along $C'$, then we will proceed almost as in
the case where $p$ is odd. The difference here, is that ${\cal C}$
will contain cycles all of length 2 except one of length 3. Indeed,
assume for instance that $u_lu_{l-2}$ is an arc of $D^{M_u}$. We
consider once again the set $X=\{u_1,u_5,u_7,\dots ,u_{l-p+1}\}$ and
show, as previously, that there exists an arc between a vertex $u_i$
of $X$ and $u_3$. If $i=1$, then we consider the cycle-factor ${\cal
  C'}$ on $2k-p-1$ vertices containing $C$ and the cycles with vertex
sets $\{u_1,u_2,u_3\},\{u_4,u_5\},\dots ,\{u_{l-p-2},u_{l-p-1}\}$ and
the cycle-factor ${\cal C}$ on $p+1$ vertices containing the cycles
with vertex set $\{u_{l-p},u_{l-p+1}\},\{u_{l-p+2},u_{l-p+3}\},\dots
,\{u_{l-4},u_{l-3}\},\{u_{l-2},u_{l-1},u_l\}$. Once again, we conclude
with Lemma~\ref{lem:SommeCF} that $D$ admits a good cycle-factor. The
cases where $i=l-p+1$ and where $i\in \{5,7,\dots ,l-p-1\}$ are
similar to the corresponding cases where $p$ is odd.\par Finally,
assume that there is no arc between two vertices at distance 2 along
$C'$. Then, we denote by $A$ the vertices $u_i$ with odd indices and
by $B$ the vertices $u_i$ with even indices. The sets $A$ and $B$ form
two strong connected components of $\overline{D^{M_u}[C']}$ and so
their are both initial and terminal strong components of
$\overline{D^{M_u}[C']}$. In particular, $D^{M_u}$ contains all the
arcs from $A$ to $B$ and all the arcs from $B$ to $A$. The vertex
$u_1$ must have a neighbor in $A$. Indeed, otherwise, as there is no
arc from $u_1$ to $C$ or from $C$ to $u_1$, we will conclude that
$\overline{d^+_{D^{M_u}}}(u_1)\ge k$ or
$\overline{d^-_{D^{M_u}}}(u_1)\ge k$, which is not possible. So,
assume that $u_1$ is adjacent to a vertex $u_i$ of $A$. Then, instead
$C'$ we consider the cycle $C''=u_1,u_2,u_i,u_4,\dots
,u_{i-1}u_3u_{i+1}, \dots u_l$. As there exist all the possible arcs
between $A$ and $B$, then all the arcs of $C''$ are contained in a
digon and there exists an arc between two vertices at distance 2 along
$C''$ (the arc $u_1u_i$). Thus, we are in the previous case and we
conclude that $D^{M_u}$ admits a good anti-cycle.

\section{Concluding remarks}
\label{sec:cr}
We finish this paper with some conjectures about the problem of
cycle-factor in bipartite or multipartite tournaments. First of all,
we have to mention the two related conjectures appearing in the
original paper of Zhang and \emph{al.}~\cite{ZMS94}. The first one
adds a new hypothesis imposing an arc in the 2-cycle-factor.
\begin{conjecture}[Zhang and al.~\cite{ZMS94}]
Let $D$ be a $k$-regular bipartite tournament, with $k$ an integer
greater than 2. Let $uv$ be any specified arc of $D$. If $D$ is
isomorphic neither to $F_{4k}$ nor to some other specified families of
digraphs, then for every even $p$ with $4\leq p \leq |V(D)| -4$, $D$
has a cycle $C$ of length $p$ such that $D\setminus C$ is hamiltonian
and such that $C$ goes through the arc $uv$.
\end{conjecture}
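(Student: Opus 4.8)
The plan is to reduce the arc-prescribed statement to the contracted digraph and then run an \emph{anchored} version of the induction used for Theorem~\ref{theo:2cfbis}. First I would normalize the arc. If $u\in S$ and $v\in T$, I choose a perfect matching $M$ of $D$ made of $S\to T$ arcs and containing $uv$; such a matching exists because a $k$-regular bipartite graph decomposes into $k$ perfect matchings, so $uv$ lies in one of them. Then $uv=uM(u)$, and any cycle of $D^M$ through the vertex $u$ lifts to a cycle of $D$ using the arc $uv$. If instead $u\in T$ and $v\in S$, the same contraction turns $uv$ into the arc $u'v$ of $D^M$, where $M(u')=u$, and a cycle of $D^M$ through that arc lifts to one through $uv$. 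Writing $p=2q$, the goal becomes: find a $(q,2k-q)$-cycle-factor of $D^M$ whose length-$q$ cycle passes through the distinguished vertex (or arc) $z$ coming from $uv$.

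Next I would split into two regimes. When $p\le 2k$ (so $q\le k$) the arc must sit in the \emph{short} cycle, which is the hard case. When $p>2k$ the arc must lie in the \emph{long} cycle, equivalently the complementary short cycle, of length $4k-p<2k$, must \emph{avoid} the arc $uv$; finding a short cycle that avoids a prescribed arc should follow from the unanchored Theorem~\ref{theo:2cf} together with a single switch (Lemma~\ref{lem:switch}) to reroute off $uv$ when needed, so I would dispose of that regime first. For the main regime I would strengthen Theorem~\ref{theo:2cfbis} to an \emph{anchored} form: for every $q$ with $2\le q\le k$, $D^M$ admits a $(q,2k-q)$-cycle-factor whose length-$q$ cycle contains $z$ and, for even $q$, is not isomorphic to $F_{2q}$. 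The base case $q=2$ is the length-$4$ arc statement, which can be handled directly or through the $p=2$ techniques of Zhang, Manoussakis and Song. The induction step would re-run the case analysis of Theorem~\ref{theo:2cfbis} (the properties ${\cal P}_{\rm up}$, ${\cal P}_{\rm down}$, ${\cal Q}_{\rm up}$, ${\cal Q}_{\rm down}$, and Cases A--C) subject to the extra invariant that no operation ever evicts $z$ from the short cycle. Most tools are compatible with this: Claim~\ref{claim:extendingC} only \emph{adds} a long-cycle vertex to the short one, and the switch of Lemma~\ref{lem:switch} combined with the merging Lemma~\ref{lem:cf-to-2cf} can always be applied so that $z$ ends up on the block destined to become the length-$(q+1)$ cycle.

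The hard part will be the steps where the short cycle is rebuilt wholesale, namely the uses of Theorem~\ref{theo:haggvist-manoussakis} inside Lemma~\ref{lem:SommeCF} and Claim~\ref{claim:good-anti-cycle}, where a strongly connected block is replaced by an arbitrary Hamiltonian cycle. There one must simultaneously guarantee the correct length $q+1$, passage through $z$, and (for even $q+1$) non-isomorphism to $F_{2(q+1)}$. Passage through $z$ is attainable because a strongly connected bipartite digraph carrying a cycle-factor has a Hamiltonian cycle through any prescribed vertex, but reconciling this with the non-$F$ condition in every subcase is delicate. Finally, the ``other specified families'' in the statement should emerge precisely as the rigid cases of this induction: the analogues of Case~B, where $D[C_2]\cong F_{|C_2|}$, and the alternating $A/B$ structures of the Case~2 subcases, in which $z$ is forced onto the wrong side or every cycle through $uv$ has length $\equiv 0 \pmod 4$. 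Pinning down this exceptional list exactly, rather than merely bounding it, is the genuine obstacle and the aspect of the conjecture that the present techniques do not yet resolve.
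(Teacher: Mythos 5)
This statement is not a theorem of the paper: it is one of the open conjectures quoted in the concluding remarks (Section~\ref{sec:cr}), and the paper offers no proof of it. Your text is accordingly a research plan rather than a proof, and you concede as much in your final sentence. The most basic obstruction is that the statement is not even well posed as something to prove: the exceptional set (``some other specified families of digraphs'') is left undetermined, so any argument must first \emph{produce} that list, which your outline only gestures at.

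Beyond that, several concrete steps would fail as described. First, your dispatch of the regime $p>2k$ is wrong: if the arc $uv$ is to lie on the long cycle $C$, the complementary short cycle must avoid \emph{both vertices} $u$ and $v$ (not merely the arc), and moreover $u$ and $v$ must end up \emph{consecutive} on the Hamiltonian cycle of the complement; a ``single switch to reroute off $uv$'' addresses neither requirement. Second, the anchored induction collapses exactly at the points you flag: Lemma~\ref{lem:cf-to-2cf} builds the new short cycle $\gamma$ from vertices $y_1,z_1$ chosen by a counting argument, with no mechanism to force a prescribed vertex $z$ into $\gamma$ rather than $\gamma'$, and Claim~\ref{claim:good-anti-cycle} moves a vertex ($a_1$) between the two sides with no control over the anchor either. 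Worse, when $u\in T$ the anchor in $D^{M}$ is an \emph{arc}, and the Hamiltonian cycles supplied by Theorem~\ref{theo:haggvist-manoussakis} inside Lemma~\ref{lem:SommeCF} come with no guarantee of using a prescribed arc (your appeal to ``a Hamiltonian cycle through any prescribed vertex'' is vacuous and does not help here). Third, your base case is not established: \cite{ZMS94} proves the unanchored $p=2$ statement, and the anchored length-$4$ case is precisely part of what this conjecture asks. So the proposal has genuine gaps at the reduction, the induction step, and the base case, in addition to the unspecified exceptional families.
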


The second conjecture, conversely, imposes that the cycles contain
specific vertices.
\begin{conjecture}[Zhang and al.~\cite{ZMS94}]
Let $D$ be a $k$-regular bipartite tournament, with $k$ an integer
greater than 2. Let $u$ and $v$ be two specified vertices of $D$. If
$D$ is isomorphic neither to $F_{4k}$ nor to some other specified
families of digraphs, then for every even $p$ with $4\leq p \leq
|V(D)| -4$, $D$ has a cycle $C$ of length $p$ such that $D\setminus C$
is hamiltonian and such that $C$ contains $u$ and $D\setminus C$
contains $v$.
\end{conjecture}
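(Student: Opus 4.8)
The plan is to peel off, one by one, the easy configurations in which a direct appeal to Claim~\ref{claim:good-anti-cycle} or Claim~\ref{claim:external-arc} already produces a good cycle-factor, thereby isolating a single hard case that I resolve with a switch. Throughout I work inside $D^{M_u}$, writing $C=C_1^{M_u}$ and $C'=C_2^{M_u}$, and I may assume (from the standing hypotheses of Case~C) that $\overline{D^{M_u}[C']}$ has at least two initial and at least two terminal strong components and that $(C_1,C_2)$ fails ${\cal P}_{\rm down}$. The first case to discard is when $x$ and $y$ lie in \emph{different} strong components of $\overline{D^{M_u}[C']}$: since $yx$ is an anti-arc it is already an anti-path from $y$ to $x$ inside $C'$, so the arc $xy$ of $C'$ meets the hypotheses of Claim~\ref{claim:external-arc} and we are done.

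So I would assume $x$ and $y$ share a common strong component $\S$ of $\overline{D^{M_u}[C']}$. Fixing an initial component $\S_{\rm init}$ and a terminal component $\S_{\rm term}$ distinct from $\S$ (which exist by hypothesis), the usual degree estimate gives $|\S_{\rm init}|,|\S_{\rm term}|\ge k-p$, and since $\S$ dominates $\S_{\rm init}$ and is dominated by $\S_{\rm term}$, the vertex $x$ must have an anti-out-neighbour $x'$ in $C$ and $y$ an anti-in-neighbour $y'$ in $C$ (otherwise the out-degree of $x$ or the in-degree of $y$ would exceed $k$). If $x'y$ or $xy'$ is itself an anti-arc, I close an anti-cycle by joining the relevant anti-arc to an anti-path from $y$ to $x$ inside $\S$ and invoke Claim~\ref{claim:good-anti-cycle}; hence I may assume both $x'y$ and $xy'$ are arcs.

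Next I would play off the anti-degrees in $C$ against those in $C'$. If $\overline{d^+_C}(x)+\overline{d^-_C}(y)>|C|=p$, a counting argument produces $u\in C$ with $xu$ and $uy$ both anti-arcs, so $x,u,y$ is an anti-cycle for Claim~\ref{claim:good-anti-cycle}. Otherwise $\overline{d^+_{C'}}(x)+\overline{d^-_{C'}}(y)$ is large, and everything turns on whether some $z\in V(C')$ completes $y,x,z$ to an anti-cycle, i.e.\ lies in $\overline{N^+}(x)\cap\overline{N^-}(y)$. If no such $z$ exists, then $V(C')\setminus\{x,y\}$ splits into a set anti-dominated by $x$ and a set anti-dominating $y$; the family of strong components sitting inside the former set contains every terminal but no initial component, so an arc of $C'$ leaving that family again meets Claim~\ref{claim:external-arc}.

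The remaining, and genuinely delicate, case is when such a $z$ does exist. Here I would first use Claim~\ref{claim:noarc-C-z} to assume $z$ has an arc $zz'$ with $C$, and then switch along the anti-cycle $y,x,z$ (Lemma~\ref{lem:switch} and Corollary~\ref{cor:switch-cycle-factor}), producing a new up-matching $M'_u$ under which the out-neighbourhoods of $x,y,z$ are cyclically permuted and $C'$ is rerouted into a Hamiltonian cycle $C''$ on the same vertices, giving a $2$-cycle-factor $(C_1,C_2')$. The whole case then rests on two invariants. First, the switch turns the anti-cycle $y,x,z$ into the anti-cycle $x,y,z$, so it merely permutes three anti-out-neighbourhoods and leaves the strong components of $\overline{D^{M_u}[C']}$, and their mutual domination, intact; thus $(C_1,C_2')$ still has property ${\cal Q}_{\rm down}$. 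Second, since after the switch $N^+_{D^{M'_u}}(y)=N^+_{D^{M_u}}(z)\ni z'$, the arc $zz'$ becomes an out-arc of $y$ into $C$, which together with an in-neighbour of $y$ in $C$ gives $y$ both an in- and an out-neighbour in $C$, i.e.\ property ${\cal P}_{\rm down}$. With ${\cal P}_{\rm down}$ and ${\cal Q}_{\rm down}$ both in force, the symmetric form of Case~A applies and yields a good cycle-factor. I expect this last step to be the main obstacle: one must verify carefully that a single switch simultaneously preserves the entire component structure of the anti-digraph on $C'$ (to retain ${\cal Q}_{\rm down}$) and manufactures exactly the arc pattern needed for ${\cal P}_{\rm down}$, whereas every other branch collapses, via one of the earlier claims, to Claim~\ref{claim:external-arc} or Claim~\ref{claim:good-anti-cycle}.
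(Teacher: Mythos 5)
The statement you were asked to prove is the second conjecture of Zhang, Manoussakis and Song quoted in the concluding remarks: given \emph{two prescribed vertices} $u$ and $v$, one wants a cycle $C$ of length $p$ with $u\in V(C)$, $v\in V(D)\setminus V(C)$ and $D\setminus C$ hamiltonian. This is left \emph{open} in the paper --- it is stated as a conjecture precisely because the authors do not prove it --- so there is no proof of it in the paper to compare yours against. More importantly, your write-up does not engage with this statement at all: the prescribed vertices $u$ and $v$ never appear in your argument (the letters $u,v$ you do use denote endpoints of arcs of $C'$, not the specified vertices), and nothing in your construction controls which of the two complementary cycles ends up containing which prescribed vertex.

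What you have actually written is, almost verbatim, the argument for Claim~\ref{claim:C'digoned} in Case~C of the proof of Theorem~\ref{theo:2cfbis}: assuming an arc $xy$ of $C_2^{M_u}$ whose reverse is an anti-arc, you run through the component analysis of $\overline{D^{M_u}[C']}$, the anti-degree counting, and the switch along the anti-cycle $y,x,z$ to recover properties ${\cal P}_{\rm down}$ and ${\cal Q}_{\rm down}$. As a reconstruction of that claim your outline is faithful, but that claim is one step in the proof of the \emph{unprescribed} complementary-cycle theorem (Theorem~\ref{theo:2cf}), which is a strictly weaker assertion than the conjecture at hand. The genuine gap is therefore structural: to attack the vertex-prescribed version you would need, at minimum, a mechanism for forcing $u$ into the short cycle and $v$ into the long one throughout the induction and the merging step of Lemma~\ref{lem:cf-to-2cf} --- note that the conjecture even anticipates additional exceptional families beyond $F_{4k}$, which signals that the prescribed version fails for some digraphs where the unprescribed one holds, so no argument that ignores $u$ and $v$ can possibly suffice.
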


Throughout the proof of Theorem~\ref{theo:2cf}, we intensively used
the regularity of the bipartite tournament. It seems that we cannot
get rid of this condition as we can easily find an infinite family of
bipartite tournament with $|d^+(u) - d^-(u)| \leq 1$ for every vertex
$u$ and $|d^+(u) - d^+(v)| \leq 1$ for every pair of vertices $\{u,
v\}$, which does not contain any cycle-factor. For instance, for any
$k\ge 1$ consider the bipartite tournament, inspired by $F_{4k}$,
consisting of four independent sets $K$, $L$, $M$ and $N$ with
$|K|=|N|=k$ and $|L|=|M|=k+1$ with all possible arcs from $K$ to $L$,
from $L$ to $M$, from $M$ to $N$ and from $N$ to $K$.\\


Let $D$ be a $c$-partite tournament, and denote by $I_1, \dots, I_c$
its independent sets. We say that $D$ is $k$-\emph{fully regular} if,
for any distinct $i$ and $j$ with $1\leq i,j \leq c$, $D[I_i \cup
  I_j]$ is a $k$-regular bipartite tournament. In particular, all the
sets $I_i$ have size $2k$.\\ In~\cite{Yeo99}, Yeo proved that if $c\ge
5$ then, in every $c$-partite regular tournament $D$, every vertex is
contained in a cycle of length $l$ for $l=3,\dots ,|V(D)|$. He also
conjectured the following.

\begin{conjecture}[Yeo~\cite{Yeo99}]
 Every regular $c$-partite tournaments $D$, with $c\ge 5$, contains a
 $(p,|V(D)|-p)$-cycle-factor for all $p\in \{3,\dots ,|V(D)|-3\}$.
\end{conjecture}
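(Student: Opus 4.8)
The plan is to keep the two-phase strategy that drives the bipartite proof---manufacture a cycle of the required short length, then show that the remaining vertices are spanned by a single cycle---while replacing the bipartite-only tools by Hamiltonicity criteria for multipartite tournaments. Two preliminary reductions simplify the task. First, a $(p,|V(D)|-p)$-cycle-factor is also an $(|V(D)|-p,p)$-cycle-factor, so I may assume $3\le p\le |V(D)|/2$. Second, regularity forces the partition to be balanced: a vertex $v$ of a part $V_i$ is joined to every vertex outside $V_i$, whence $d^+(v)+d^-(v)=|V(D)|-|V_i|=2d$ and all parts share the common size $m=|V(D)|/c\le |V(D)|/5$. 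The entry point is Yeo's vertex-pancyclicity result quoted above: for $c\ge 5$ it already provides a cycle $C$ of every length $p$ through every vertex, so the whole content of the conjecture is to realise such a $C$ as the \emph{short} cycle of a $2$-cycle-factor, i.e.\ to make $D'=D[V(D)\setminus V(C)]$ hamiltonian.

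First I would reduce everything to a good \emph{choice} of $C$. By a directed Hall/flow argument (the cycle-factor analogue of the matching argument in Section~\ref{sec:def}), $D'$ has a cycle-factor as soon as no small set of vertices is starved by the deletion of $C$, and $D'$ is strongly connected provided the vertices of $C$ do not form a separator. Since every part has size at most $|V(D)|/5$, a cycle of length $p\le |V(D)|/2$ can be taken to be \emph{balanced}, meeting each part in roughly $p/c\le m/2$ vertices, and I would show that such a balanced $C$ can always be chosen so that every vertex of $D'$ keeps an in- and an out-neighbour in several parts. This simultaneously yields a cycle-factor of $D'$ and its strong connectivity.

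For the merging phase, the machinery of Section~\ref{sec:thm}---the contracted digraph of Section~\ref{sec:def}, the switches along anti-cycles, and ultimately Lemma~\ref{lem:cf-to-2cf}---has no literal counterpart once $c\ge 3$, because it is built on the bipartition of $D$. In its place I would invoke a multipartite analogue of Theorem~\ref{theo:haggvist-manoussakis}, of the type developed by Yeo and by Guo and Volkmann: a strongly connected multipartite tournament carrying a cycle-factor and satisfying a mild degree condition is hamiltonian. Applied to the $D'$ produced above---strongly connected, equipped with a cycle-factor, and nearly regular because $C$ was balanced---this returns a hamiltonian cycle of $D'$, i.e.\ the complementary cycle of length $|V(D)|-p$, completing the $2$-cycle-factor. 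When $c\mid p$ one can be cleaner still: a perfectly balanced $C$ leaves $D'$ itself regular $c$-partite, and Yeo's theorem applies verbatim. An equivalent route, closer in spirit to Section~\ref{sec:thm}, is an induction on $p$ that grows the short cycle one vertex at a time by a local re-routing absorbing a vertex of the long cycle; the base case $p=3$ would again come from choosing a balanced triangle.

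The hard part is not the merging but the deletion: I must guarantee, for \emph{every} $p$ in the range at once, that some $p$-cycle can be removed without destroying the Hamiltonicity of the rest. A cycle concentrated on one or two parts can strip a vertex of almost all of its out- or in-neighbours, so the balancedness of $C$ must be controlled uniformly in $p$, and the chosen Hamiltonicity criterion must tolerate the residual irregularity. The deeper obstacle, peculiar to $c\ge 5$, is that the conjecture permits \emph{no} exceptional family, in sharp contrast with the $F_{4k}$ that is unavoidable in the bipartite case: one must prove that the richer part structure for $c\ge 5$ rules out every $F$-type obstruction, that is, that no regular $c$-partite tournament with $c\ge 5$ can behave like $F_{4k}$ for any target length. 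Establishing this is precisely the point at which the clean bipartite contraction argument leaves no trace, and I expect it to be where the real difficulty lies.
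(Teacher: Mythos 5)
This statement is not a theorem of the paper: it is Yeo's Conjecture, quoted verbatim in the concluding remarks as an \emph{open} problem. The paper offers no proof of it and instead formulates a weaker variant (Conjecture~\ref{conj:mult}) that it can only settle in the special case where $c$ is even and some pair of parts avoids the $F_{4k}$ obstruction, by reducing to Theorem~\ref{theo:2cf}. So there is no ``paper's own proof'' to compare against, and what you have written is a research plan rather than a proof.

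As a plan it has concrete gaps at every load-bearing joint. The merging phase rests on ``a multipartite analogue of Theorem~\ref{theo:haggvist-manoussakis}'' asserting that strong connectivity plus a cycle-factor plus a mild degree condition forces hamiltonicity; no such theorem is available in the generality you need. The Häggkvist--Manoussakis result is genuinely bipartite, and the known multipartite substitutes (Yeo's hamiltonicity of regular multipartite tournaments, the Guo--Volkmann results) require hypotheses such as regularity or strong quasi-Hamiltonian conditions that $D'=D\setminus V(C)$ loses once a cycle is deleted -- a strong $c$-partite tournament with a cycle-factor need not be hamiltonian for $c\ge 3$. The deletion phase is equally unsupported: the existence of a cycle-factor in $D'$ is asserted via an unspecified ``directed Hall/flow argument,'' and the claim that $C$ can be chosen balanced across the parts fails outright for small $p$ (a triangle meets only three of the $c\ge 5$ parts, so for $p=3$ the residual digraph is necessarily unbalanced). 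Finally, your own closing paragraph concedes that ruling out $F$-type obstructions for $c\ge 5$ -- the step on which the absence of an exceptional family in the conjecture depends -- is not done. None of these gaps is cosmetic; each is the actual mathematical content that keeps the statement a conjecture.
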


An extension of our results and a weaker form of Yeo's Conjecture could
be the following.

\begin{conjecture}
 \label{conj:mult}
Let $D$ be a $k$-fully regular $c$-partite tournament with $c\ge
5$. Then for every even $p$ with $4\leq p \leq |V(D)| -4$, $D$ has a
$(p, |V(D)| - p)$-cycle-factor.
\end{conjecture}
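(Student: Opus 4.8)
Since Conjecture~\ref{conj:mult} is open, I can only sketch a plausible line of attack, patterned on the inductive architecture that drove Theorem~\ref{theo:2cf}. Write $n=|V(D)|=2kc$. In a $k$-fully regular $c$-partite tournament every vertex has $d^+(v)=d^-(v)=k(c-1)$, and so $D$ always contains a cycle-factor: as in the bipartite case, a cycle-factor is exactly a perfect matching of the bipartite ``out/in'' graph whose left and right copies of $V(D)$ are joined by an edge for each arc of $D$. That auxiliary graph is $k(c-1)$-regular, hence has a perfect matching by Hall's theorem. With a cycle-factor in hand, the plan is to run the induction on the even parameter $p$ in steps of two, starting from $p=4$, each time producing a cycle-factor in which one cycle has length exactly $p$ and then ``merging'' the remaining cycles into a single cycle spanning the complement, just as Lemma~\ref{lem:cf-to-2cf} does in the bipartite setting.

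The merging step is where the first serious obstacle appears. In the bipartite case it rests on Theorem~\ref{theo:haggvist-manoussakis}, which has no exact analogue for general multipartite tournaments: a strong $c$-partite tournament with a cycle-factor need not be Hamiltonian. I therefore expect one of two routes to be necessary. The first is to engineer the construction so that the complement of the short cycle is itself regular, or near-regular, so that Yeo's pancyclicity result~\cite{Yeo99} for regular $c$-partite tournaments with $c\ge 5$ applies and yields a Hamiltonian cycle on the complement. The second, more robust route is to prove a bespoke merging lemma for the fully regular setting, using the acyclic ordering of the strong components of the cycle-factor (exactly the ordering furnished by Theorem~\ref{theo:haggvist-manoussakis} in the bipartite proof) together with the surplus of arcs forced by $c\ge 5$ to glue consecutive components into one cycle.

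Granting such a merging lemma, the remaining work is to produce, for each even $p$ in range, a cycle-factor with a cycle of length $p$, and this I would do by the same absorption mechanism as in the inductive step of Theorem~\ref{theo:2cfbis}. Starting from a $(p,n-p)$-cycle-factor $(C_1,C_2)$ with $|C_1|=p$, I would absorb two vertices of the long cycle $C_2$ into $C_1$ while keeping $C_2$ minus those vertices strong and cycle-factored. The moves available are vertex insertion in the spirit of Claim~\ref{claim:extendingC} and the switch along an anti-cycle of Lemma~\ref{lem:switch} and Corollary~\ref{cor:switch-cycle-factor}, both of which transport to a contracted multipartite setting. Keeping $p$ even and adding exactly two vertices preserves the parity bookkeeping that organised the bipartite case analysis, and the density of arcs guaranteed by $c\ge 5$ should ensure that at least one such move is always available.

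The principal obstacle, as in the proof of Theorem~\ref{theo:2cfbis}, is the proliferation of configurations in which every candidate insertion or switch is simultaneously blocked. Two features make the multipartite case genuinely harder than the bipartite one: the contraction technique no longer rests on a single canonical matching, and the non-arcs now split into two species, the within-part anti-arcs (always present, since each independent set $I_i$ is a clique of anti-arcs) and the genuine between-part non-arcs coming from each bipartite slice $D[I_i\cup I_j]$. Controlling their interaction---in particular ensuring that the two absorbed vertices lie in parts that keep both cycles balanced across the colour classes, so that the regular substructure survives---is the delicate point. I expect the hardest residual cases to be the highly symmetric, $F_{4k}$-type obstructions in which the blocking is total; the role of the hypothesis $c\ge 5$ is presumably exactly to supply enough connectivity and cycle richness, mirroring Yeo's threshold, to exclude them, and turning that heuristic into a quantitative argument is the crux of any full proof.
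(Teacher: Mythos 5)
This statement is Conjecture~\ref{conj:mult}, which the paper leaves open: it appears in the concluding remarks with no proof, only the observation that when $c$ is even one can group the $c$ parts into two halves of $c/2$ parts each and apply Theorem~\ref{theo:2cf} to the resulting $(kc/2)$-regular bipartite tournament on $2kc$ vertices (provided one can arrange that it is not isomorphic to $F_{2kc}$). So there is no argument of the paper's to compare yours against, and your submission is, by your own framing, a research programme rather than a proof. Judged as such, your preliminary observations are sound: the degree count $d^+(v)=d^-(v)=k(c-1)$ is correct, the existence of a cycle-factor via a perfect matching in the $k(c-1)$-regular out/in bipartite graph is a valid argument, and you correctly identify that Theorem~\ref{theo:haggvist-manoussakis} has no multipartite analogue, which is indeed the structural reason the paper's method does not transport.

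The gap is that every load-bearing step of your plan is conditional: the merging lemma is granted rather than proved, the absorption step is asserted to be available because the arc density ``should'' supply an insertion or a switch, and the extremal configurations in which all moves are blocked are explicitly deferred. Nothing in the sketch rules out the $F$-type obstructions you yourself flag, and since the conjecture as stated excludes no exceptional digraphs, you would first have to decide whether $c\ge 5$ genuinely kills them or whether the statement needs an exclusion clause before any induction can close. The one concrete piece of progress that is actually available --- the even-$c$ reduction described above, which settles half of the cases modulo the $F_{2kc}$ check --- is absent from your sketch and would be the natural starting point; the genuinely open content is the case of odd $c$, where no grouping of the parts yields a balanced bipartition and the contraction machinery has no evident substitute.
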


We can see that if $c$ is even and there is at least one pair $\{I_i,
I_j\}$ such that $D[I_i, I_j]$ is not isomorphic to $F_{4k}$, then our
result implies the Conjecture~\ref{conj:mult}, by properly
partitioning the sets $I_i$ into two parts and applying
Theorem~\ref{theo:2cf} on the bipartite lying between the two
parts. However, the case where $c$ is odd seems more complicated to handle.
\vspace{15pt}

\bibliography{RefBip}

\end{document}